\let\oldtocsection=\tocsection
\let\oldtocsubsection=\tocsubsection
\let\oldtocsubsubsection=\tocsubsubsection
\renewcommand{\tocsection}[2]{\hspace{0em}\oldtocsection{#1}{#2}\textbf}
\renewcommand{\tocsubsection}[2]{\hspace{1em}\oldtocsubsection{#1}{#2}}
\renewcommand{\tocsubsubsection}[2]{\hspace{2em}\oldtocsubsubsection{#1}{#2}}
\newcommand{\dv}{\text{\rm div}}
\renewcommand{\o}{\text{\rm o}}
\renewcommand{\d}{\text{\rm d}}
\newcommand{\zext}{z_{\text{\rm ext}}}
\newcommand{\Drot}{D_{\text{\rm rot}}}
\newcommand{\Dstat}{D_{\text{\rm stat}}}
\newcommand{\Dgap}{D_{\text{\rm gap}}}
\newcommand{\Dmag}{D_{\text{\rm mag}}}
\newcommand{\Tad}{\Theta_{\text{\rm ad}}}
\newcommand{\Wper}{W_{\text{\rm per}}}
\newcommand{\Tor}{\text{\rm Tor}}
\newcommand{\curl}{\textbf{curl}}
\newcommand{\com}{\text{\rm com}}
\newcommand{\tr}{\text{\rm tr}}
\newcommand{\I}{\text{\rm I}}
\newcommand{\Id}{\text{\rm Id}}
\newcommand{\calA}{{\mathcal A}}
\newcommand{\calB}{{\mathcal B}}
\newcommand{\calK}{{\mathcal K}}
\newcommand{\calF}{{\mathcal F}}
\newcommand{\calC}{{\mathcal C}}
\newcommand{\calL}{{\mathcal L}}
\newcommand{\calO}{{\mathcal O}}
\newcommand{\calT}{{\mathcal T}}
\newcommand{\Winfty}{W^{1,\infty}(\mathbb{R}^d,\mathbb{R}^d)}
\newcommand{\R}{{\mathbb R}}
\newcommand{\black}{\color{black}}
\newcommandx{\pcomment}[2][1=]{\todo[linecolor=red,backgroundcolor=red!25,bordercolor=red,#1]{#2}}
\newcommandx{\ccomment}[2][1=]{\todo[linecolor=olive,backgroundcolor=olive!25,bordercolor=olive,#1]{#2}}
\newcommandx{\acomment}[2][1=]{\todo[linecolor=blue,backgroundcolor=blue!25,bordercolor=blue,#1]{#2}}
\newif\ifuzero%\uzerotrue
\newif\ifacanum%\acanumtrue
\begin{document}
\newtheorem{theorem}{Theorem}[section]
\newtheorem{problem}{Problem}[section]
\newtheorem{remark}{Remark}[section]
\newtheorem{example}{Example}[section]
\newtheorem{definition}{Definition}[section]
\newtheorem{lemma}{Lemma}[section]
\newtheorem{corollary}{Corollary}[section]
\newtheorem{proposition}{Proposition}[section]
\numberwithin{equation}{section}

\title{Space-time shape optimization of rotating electric machines}
\author{
A. Cesarano\textsuperscript{1}, C. Dapogny\textsuperscript{2}, P. Gangl\textsuperscript{1}
}

\maketitle
%%%%%%%%%%%%%%%%%%%%%%%%%%%%%%%%%%%%%%%%%%%%%%%%%%%%%%%%%%%%%%%%%%%%%%%%%%%%%%%%%%%%%%%%%%%%%%%%
\begin{center}
\emph{\textsuperscript{1} Johann Radon Institute of Computational and Applied Mathematics, Altenberger Stra\ss{}e 69, 4040 Linz}\\
\emph{\textsuperscript{2} Univ. Grenoble Alpes, CNRS, Grenoble INP\footnote{Institute of Engineering Univ. Grenoble Alpes}, LJK, 38000 Grenoble, France.}
\end{center}
%%%%%%%%%%%%%%%%%%%%%%%%%%%%%%%%%%%%%%%%%%%%%%%%%%%%%%%%%%%%%%%%%%%%%%%%%%%%%%%%%%%%%%%%%%%%%%%%
 
 \begin{abstract}
 This article is devoted to the shape optimization of the internal structure of an electric motor, 
 and more precisely of the arrangement of air and ferromagnetic material inside the rotor part with the aim to increase the torque of the machine. 
 The governing physical problem is the time-dependent, non linear magneto-quasi-static version of Maxwell's equations. 
 This multiphase problem can be reformulated on a 2d section of the real cylindrical 3d configuration; however, 
 due to the rotation of the machine, the geometry of the various material phases at play (the ferromagnetic material, the permanent magnets, air, etc.) undergoes a prescribed motion over the considered time period. 
 This original setting raises a number of issues. From the theoretical viewpoint, we prove the well-posedness of this unusual non linear evolution problem featuring a moving geometry. 
 We then calculate the shape derivative of a performance criterion depending on the shape of the ferromagnetic phase via the corresponding magneto-quasi-static potential. 
 Our numerical framework to address this problem is based on a shape gradient algorithm. 
 The non linear time periodic evolution problems for the magneto-quasi-static potential is solved in the time domain, with a Newton-Raphson method. 
 The discretization features a space-time finite element method, applied on a precise, meshed representation of the space-time region of interest, 
 which encloses a body-fitted representation of the various material phases of the motor at all the considered stages of the time period. 
 After appraising the efficiency of our numerical framework on an academic problem, we present a quite realistic example of optimal design of the ferromagnetic phase of the rotor of an electric machine. 
  \end{abstract}
%%%%%%%%%%%%%%%%%%%%%%%%%%%%%%%%%%%%%%%%%%%%%%%%%%%%%%%%%%%%%%%%%%%%%%%%%%%%%%%%%%%%%%%%%%%%%%%%
\bigskip
\bigskip
\hrule
\tableofcontents
\vspace{-0.5cm}
\hrule
\bigskip
\bigskip

%%%%%%%%%%%%%%%%%%%%%%%%%%%%%%%%%%%%%%%%%%%%%%%%%%%%%%%

%%%%%%%%%%%%%%%%%%%%%%%%%%%%%%%%%%%%%%%%%%%%%%%%%%%%%%%
%%%%%%%%%%%%%%%%%%%%%%%%%%%%%%%%%%%%%%%%%%%%%%%%%%%%%%%
\section{Introduction}
%%%%%%%%%%%%%%%%%%%%%%%%%%%%%%%%%%%%%%%%%%%%%%%%%%%%%%%
%%%%%%%%%%%%%%%%%%%%%%%%%%%%%%%%%%%%%%%%%%%%%%%%%%%%%%%
\noindent Electric motors are devices meant to convert electric energy into mechanical energy. 
For multiple reasons that are related to burning scientific and societal challenges,
they have recently aroused a tremendous enthusiasm in the academic and industrial communities.
Notably, as they can achieve much superior yield to thermal engines, their systematic deployment would allow for decisive energy savings,
not depending on expensive and increasingly scarce fossil fuels. 
They are also regarded as a promising energy production means in the perspective of the current environmental crisis, as their carbon footprint can be inferior to that of thermal motors by up to 80$\%$.
We refer to classical reference books such as \cite{fitzgerald2003electric,krause2013analysis} for a more exhaustive presentation of electric machines.

Anticipating a little on the more complete description of \cref{sec.setting}, an electric motor is composed of an external, static part called stator, and an internal, rotating part -- the rotor. An alternating electric current is injected through the coils placed
in the stator, thus generating a magnetic field which sets the rotor in motion. The resulting mechanical work is transmitted to a shaft attached to its core and can be used directly or stored, see \cref{fig_machine} below for an illustration.
The efficiency of this workflow strongly depends on the arrangement of the costly components of the rotor, in particular the ferromagnetic material and the
permanent magnets (usually made of rare-earth elements), and the optimization of their layout presages tremendous performance improvements, see \cite{neittaanmaki1996inverse} about this issue. 

Over the past decades, optimal design has been a thriving field of research at the crossroads of mathematics, physics and engineering; 
multiple frameworks are available to address such issues, with competing assets and drawbacks. 
Among them, shape optimization aims to improve an initial guess through smooth variations of the boundaries or material interfaces at play; this relies on the information contained in the ``shape gradients'' of the objective and constraint functionals of the problem -- see e.g. \cite{allaire2020survey,allaire2004structural,delfour2011shapes,henrot2018shape,murat1976controle,sokolowski1992introduction} and \cite{dapogny2022shape,GanglSturmNeunteufelSchoeberl2020,Blauth2021,Paganini2021} for numerical implementations. 
On the other hand, topology optimization methods affect the connectivity of a design, 
for instance by leveraging topological derivatives, indicating where holes can be fruitfully nucleated inside a shape \cite{amstutz2022introduction,amstutz2006new,novotny2012topological,novotny2020introduction}, 
or via a suitable reformulation of the physical situation and the optimal design problem under scrutiny in terms of density functions \cite{bendsoe2013topology,sigmund2013topology}.
Surprisingly enough, such techniques have only been applied recently in the physical context of electric motors.
Among the contributions in this direction and without aiming exhaustivity, let us mention the articles \cite{guo2020simultaneous,guo2020multimaterial,ma2019topology} using density-based topology optimization strategies, and those \cite{brun2023level,lee2012topological,ren2019topology} relying on the level set method;
see also \cite{kuci2021level} about a coupling between the level set method and a body-fitted meshed representation of the machine enabling more accurate finite element simulations. 
We refer to \cite{gangl2016sensitivity,lucchini2022topology} for recent surveys of optimal design techniques in the field of electromagnetism, and to \cite{bramerdorfer2018modern,wang2022topology} for a particular focus on their application to electric machines.

The specific setting of electric motors raises several challenges. At first, although the boundary value problem governing the behavior of the motor 
is a reduction of the full 3d Maxwell's equations to its 2d cross-section, it features a non linear material law which is essential to capture the decisive saturation effect of the ferromagnetic constituent of the rotor \cite{cullity2011introduction}. 
The physical problem at hand is also time-dependent; moreover, as the rotor is in rotation, the optimal design criterion depends on all the stages of this periodic motion; this feature is fairly awkward in view of optimal design because of the tremendous computational cost incurred, see e.g. \cite{KaliseKunischSturm2018,SchulzSiebenbornWelker2015}.
To the best of our knowledge and with the exception of the recent contribution \cite{GanglKrenn2022}, all the existing studies about the shape or topology optimization of electric motors resort to an additional simplification referred to as the magneto-static approximation of Maxwell's equations: the physical problem is thereby reduced to a series of decoupled static (elliptic and non linear) problems posed on the various rotated configurations of the motor, which can be solved independently from one another. 
Unfortunately, this convenient reformulation does not allow to account for subtle and realistic temporal effects such as eddy currents.

The present article is a natural continuation to the series \cite{GanglLangerLaurainMeftahiSturm2015,MerkelGanglSchoeps2021}.
We wish to optimize the repartition of the constituent materials of an electric motor; yet, contrary to the prevailing practice in the literature,
we rely on the genuine magneto-quasi-static description of the physical situation. 
The boundary value problem at stake can no longer be reduced to a collection of independent stationary problems: it is a non linear, time-dependent problem of mixed elliptic-parabolic type \cite{bachinger2005numerical}, 
equipped with time periodicity conditions, in which the optimization criterion brings into play all the rotated geometric configurations of the internal structure of the motor. 

From the theoretical viewpoint, our first contribution is to prove the well-posedness of this physical evolution problem, which leverages techniques from the theory of non linear partial differential equations. 
We next analyze shape optimization problems where the performance functionals at play depend on the solution to this problem. 
From the numerical viewpoint, the specific nature of the considered time-dependent problem raises the need for an adapted strategy. Our framework relies on 
 a space-time finite element discretization, see e.g. \cite{LangerMooreNeumueller2016,steinbach2015} in the context of linear problems, \cite{toulopoulos2022} for a quasilinear problem, or more recently \cite{GanglGobrialSteinbach2023} for an application in the context of electric machines. 
In a nutshell, the time variable is treated as if it were an additional space variable, thus converting a time-dependent problem posed in a $d$-dimensional domain into a static problem on a $(d+1)$-dimensional space time cylinder; 
the latter is solved by applying the finite element method on a mesh of the latter.
While this approach suffers from an increase in the dimension of the problem, 
it enjoys a number of unique assets over classical time-stepping methods (see e.g. \cite{Thomee2006}):
\begin{itemize}
\item The adaptive refinement of the mesh and the parallelization of the solution of the evolution problem can be realized in space and time, jointly; 
\item The adjoint problem involved in the shape derivative of the optimization criterion can be solved concurrently with the state equation, thus allowing for a further level of parallelization;
\item Shapes evolving over the considered time period are static in the space-time domain; hence, evolution problems involving such moving domains can be treated by ``standard'', time independent numerical methods once a space-time mesh adapted to the moving geometry is available; 
\item Time periodicity conditions are straightforward to enforce by direct identification of the degrees of freedom on the bottom and top sections of the space-time cylinder. 
\end{itemize}

The remainder of this article is organized as follows. In \cref{sec.setting}, we present the physical and mathematical settings of the boundary value problem governing the physics of motors, 
as well as the shape optimization problem considered in this setting. 
The next \cref{sec:genshape} is devoted to an academic, albeit instructive preliminary situation, that of the calculation of the shape derivative of a ``simple'' functional, depending on a domain $\Omega$ via the integral of a given function over a collection of deformed versions of $\Omega$. In \cref{sec_calcsdmqs}, we outline a few mathematical features of the considered magneto-quasi-static evolution problems, and we  
detail the calculation of a shape derivative in this context. 
After discussing a few details of our numerical implementation in \cref{sec.num}, we show in \cref{sec.numex} two numerical examples illustrating the previous developments;
a few conclusions and perspectives of our work are then given in \cref{sec.concl}. 
The article ends with two \cref{app.tech,app.varft}, where a few ``classical'' technical results are recalled, and the main steps of the proof of the well-posedness of the considered magneto-quasi-static problem are sketched.

%%%%%%%%%%%%%%%%%%%%%%%%%%%%%%%%%%%%%%%%%%%%%%%%%%%%%%%
%%%%%%%%%%%%%%%%%%%%%%%%%%%%%%%%%%%%%%%%%%%%%%%%%%%%%%%
\section{Modeling of the physical behavior of an electric motor}\label{sec.setting}
%%%%%%%%%%%%%%%%%%%%%%%%%%%%%%%%%%%%%%%%%%%%%%%%%%%%%%%
%%%%%%%%%%%%%%%%%%%%%%%%%%%%%%%%%%%%%%%%%%%%%%%%%%%%%%%

\noindent This section introduces the physical and mathematical aspects of the optimal design of electric motors. 
After a few generalities about these devices and their modeling in terms of the Maxwell's equations in \cref{sec.physmod}, we discuss the approximate 2d magneto-quasi-static setting in \cref{sec.2dmagneto}.
The internal structure of the considered motors is presented in \cref{sec.2dgeomotor}, and 
we eventually set the mathematical framework and the main notations used throughout the article in the next \cref{sec.not}.  

%%%%%%%%%%
\subsection{The eddy current problem for the 3d Maxwell's equations} \label{sec.physmod}
%%%%%%%%%%

\noindent 
This section describes, in an informal manner, the general operation of an electric motor and its mathematical formulation via the eddy current problem stemming from the 3d Maxwell's equations; 
our main references about this topic are Chap. 2 in \cite{gangl2016sensitivity}, \cite{rodriguez2010eddy} and \cite{Touzani2014}. \par\medskip

The motor under scrutiny has a cylindrical structure and its transverse section is sketched in \cref{fig_machine}. 
It is made of an inner rotating part called rotor, 
and a static outer part, the stator; these regions are separated by a thin air gap and the core of the rotor is connected to a transmission shaft. 
The stator is an arrangement of ferromagnetic material and coils made of thin copper wires;
the rotor is composed of ferromagnetic material, air inclusions and several permanent magnets.

In the classical language of electromagnetism, for which we refer to \cite{griffiths2017introduction,jackson2007classical} and whose main concepts and notations are used in this presentation, 
the device is activated when a time-dependent electric current $\mathbf J_i$ is powered into the coils of the stator.
According to Amp\`{e}re's law, this generates a time-dependent magnetic field $\mathbf H$ in the whole region of interest, 
which in turn induces an electric field $\mathbf E$ by Faraday's law. The resulting Lorentz force sets in motion the charge carriers contained in the rotor and thereby the attached shaft; this 
mechanical work is eventually used or stored. 

In order to put this rough sketch into equations, let us first note that, as opposed to light waves, the present application falls in the regime of low-frequency electromagnetism, so that the displacement currents, i.e. the time derivative of the magnetic induction $\mathbf D$, can be neglected, 
see \cite{rodriguez2010eddy} for an intuitive explanation of this fact and \cite{buffa2000justification} for a mathematical justification. 
The magnetic field ${\mathbf H}$ generated around the motor is then related to the total density of current ${\mathbf J}$ via the following magneto-quasi-static version of Amp\`ere's law:
\begin{equation} \label{eq_ampere}
    \curl (\mathbf H) = \mathbf J.
\end{equation}
On the other hand, this field is related to the magnetic induction (or magnetic flux density) ${\mathbf B}$ via the following constitutive law: 
\begin{align} \label{eq_constitutive}
    % \mathbf H = \nu(|\mathbf B|)(\mathbf B - \mathbf M),
    \mathbf H = \nu(|\mathbf B|)\mathbf B - \mathbf M.
\end{align}
Here, the coefficient $\nu$ represents the magnetic reluctivity, i.e. the inverse of the magnetic permeability; 
it expresses how the material develops a magnetic field in the presence of a magnetization force. 
The vector field ${\mathbf M}$ denotes the permanent magnetization, i.e. the density of permanent magnetic dipoles within the motor; this field vanishes outside the permanent magnets of the rotor. 
The reluctivity $\nu$ takes different values depending on the material phase -- a dependence with respect to the spatial location which is omitted in \cref{eq_constitutive} for simplicity. 
It assumes constant values $\nu_a$, $\nu_m$, $\nu_c$ in air, permanent magnets and copper, respectively.
By contrast, in ferromagnetic materials, it is a function of the intensity $\lvert \mathbf B \lvert$ of the magnetic induction: for small values of $\mathbf H$, the material amplifies the magnetic flux density $\mathbf B$, whereas for high values of $\mathbf H$, the relation between both quantities is similar to that occurring in void. We refer to \cite{pechstein2006monotonicity} about the calibration of such a reluctivity function from physical measurements.

According to Gauss's law, the magnetic induction ${\mathbf B}$ is solenoidal, i.e.
\begin{align} \label{eq_gauss}
    \mbox{div}(\mathbf B)=0.
\end{align}
In turn, the time variations of $\mathbf B$ induce an electric field ${\mathbf E}$ according to Faraday's law:
\begin{equation} \label{eq_faraday}
    \curl(\mathbf E) = - \frac{\partial \mathbf B}{\partial t}.
\end{equation}

This model is completed by Ohm's law, which expresses the density $\mathbf J$ of current inside the region of interest as 
the superposition of the density $\mathbf J_i$ of current impressed in the coils and that produced by the Lorentz force acting on the charge carriers: 
\begin{equation}\label{eq.Ohm}
\mathbf J = \mathbf J_i + \sigma ({\mathbf v} \times \mathbf B + \mathbf E) ,
\end{equation}
where ${ \mathbf v}$ is the velocity field of the rotating part and $\sigma$ is the electric conductivity, which takes a different, constant value inside each material phase.\par\medskip

The combination of these relations allows to express all the above quantities in terms of a single vector field -- the vector potential $\mathbf A$ -- and to characterize the latter by a partial differential equation. 
Indeed, at first,
 \cref{eq_gauss} implies the existence of a vector field $\mathbf A$ such that $\mathbf B = \curl( \mathbf A)$, which is determined up to the addition of the gradient of a scalar field. 
Judging from \cref{eq_faraday}, the vector potential $\mathbf A$ can be selected as the unique such vector field satisfying $\mathbf E = - \frac{\partial \mathbf A}{\partial t}$.
Now injecting the constitutive relation \cref{eq_constitutive} and the expression \cref{eq.Ohm} of the current density into Amp\`{e}re's law \cref{eq_ampere}, 
we obtain the three-dimensional magneto-quasi-static version of Maxwell's equation, also referred to as the eddy current problem:
\begin{align} \label{eq_mqs3d}
    \sigma \left( \frac{\partial \mathbf A}{\partial t} -   \mathbf v \times  \curl (\mathbf A) \right) +  \curl\Big( \nu(\lvert\curl( \mathbf A) \lvert)\curl( \mathbf A)\Big) = \mathbf J_i + \curl(\mathbf M).
\end{align}

The partial differential equation \cref{eq_mqs3d} is complemented by suitable time and spatial boundary conditions. 
As far as the former are considered, our study concerns the permanent regime of the motor, where the material distribution retrieves its initial configuration and all physical quantities return to their initial values after one rotation. 
The equation \cref{eq_mqs3d} is then equipped with a time periodic condition over the considered time period $[0,T]$. 
As regards boundary conditions, at each time $t \in [0,T]$, the usual transmission conditions are assumed at the interfaces between the various material phases, 
and no magnetic flux leaves the computational domain:
\begin{equation}\label{eq.noflux}
\mathbf B \cdot \mathbf n = 0 \text{ on the outer boundary of the stator},
\end{equation}
where $\mathbf n$ is the unit normal vector field to this boundary, pointing outward the device.

%%%
\subsection{The 2d magneto-quasi-static equation}\label{sec.2dmagneto}
%%%%

\noindent As we have mentioned, the full 3d motor under scrutiny has a cylindrical structure. 
Taking advantage of its relatively large size along the shaft axis, that we suppose without loss of generality to be the line passing through the origin, oriented along the third coordinate vector ${\mathbf e}_3$, and 
of the invariance of its geometry in this direction, 
it is customary to approximate the three-dimensional problem \cref{eq_mqs3d} with a simpler two-dimensional version, posed in the cross-section of the device \cite{kolota2011analysis,torkaman2008comprehensive}.

This reduction is justified since the impressed current density $\mathbf J_{\text{i}}$ and the magnetization $\mathbf M$ are of the form
$$\mathbf J_i(t,x_1, x_2, x_3) = (0,0, f(t,x_1, x_2)), \:\: \mathbf M(t,x_1, x_2, x_3) = (M_1(t,x_1, x_2), M_2(t,x_1, x_2), 0)$$ 
and the velocity $\mathbf v$ representing the rotation of the rotor reads
$$ \mathbf v(t,x_1, x_2, x_3) = ( v_1(t,x_1,x_2),  v_2(t,x_1,x_2),0).$$
In turn, the vector potential $\mathbf A$ is oriented along the axis $\mathbf e_3$ and its values depend only on the spatial position in the cross-section $D$, i.e. 
$$\mathbf A(t,x_1, x_2, x_3) = (0,0, u(t,x_1,x_2)),$$
for a suitable function $u(t,x_1,x_2)$. 

Under these assumptions, the eddy current equation \cref{eq_mqs3d} reduces to a 2d partial differential equation for the scalar field $u$ which is posed in the cross-section $D$ of the machine; 
an elementary calculation indeed yields: 
\begin{align} \label{eq_mqs2d}
    \sigma \left( \frac{\partial u}{\partial t} +  v \cdot \nabla u \right) -  \mbox{div}( \nu(|\nabla u|) \nabla u) = f - \dv M^\perp, \qquad x \in D, \; t \in (0, T).
\end{align}
Here, we have dropped the bold font for vector fields when they lie inside the cross-section plane of the motor; we have set 
$$M(t,x_1, x_2) := (M_1(t,x_1, x_2), M_2(t,x_1, x_2)), \:\: v(t,x_1, x_2) := (v_1(t,x_1, x_2), v_2(t,x_1, x_2) ),$$ 
and we use the notation $y^\perp = (-y_2, y_1)$ for the 90$^\circ$ clockwise rotate of a two-dimensional vector $y = (y_1, y_2)$. \par\medskip

\begin{figure}
\centering
 \includegraphics[width=1.0\textwidth]{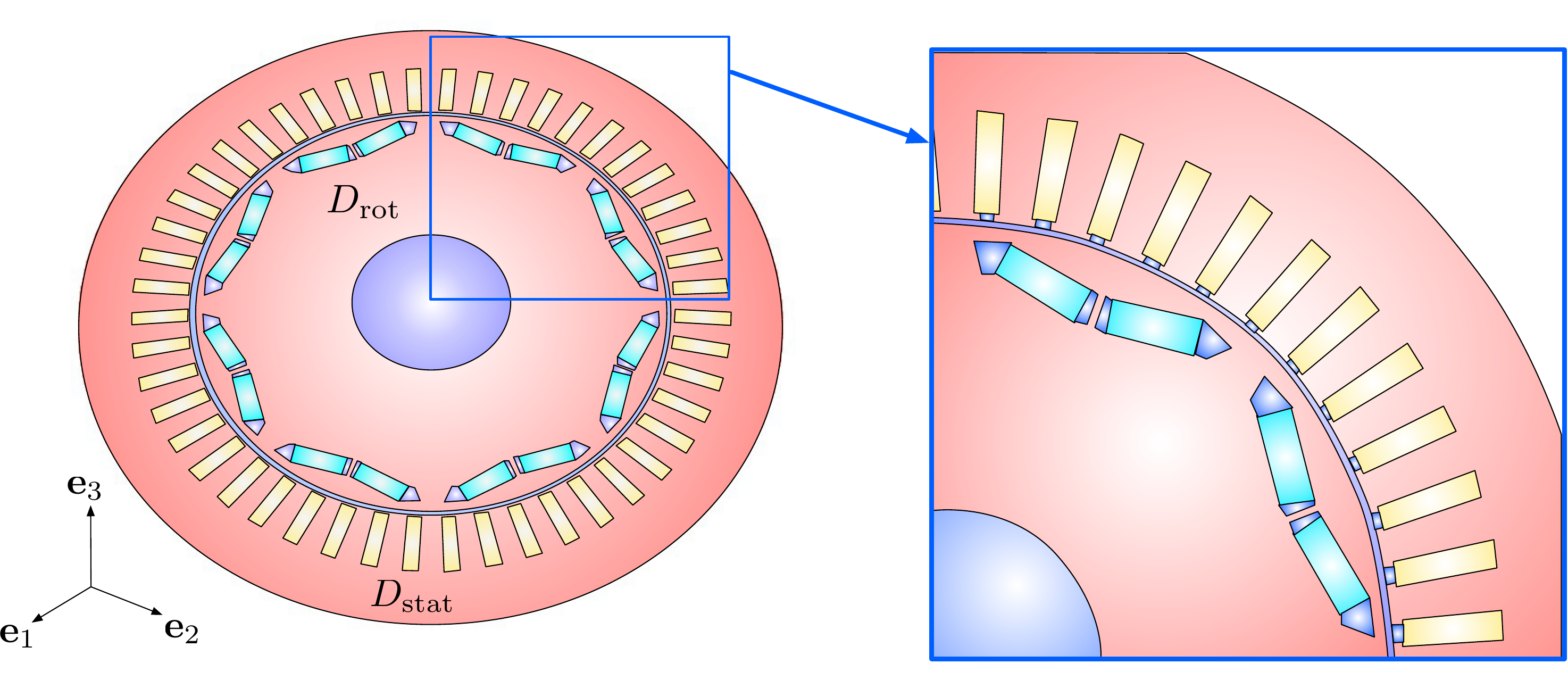}
    \caption{\it Two-dimensional cross-section $D$ of an electric machine of the form considered in \cref{sec.setting}: the ferromagnetic material is depicted in red, air is in dark blue, the coils are in yellow, and the permanent magnets are in light blue.}
    \label{fig_machine}
\end{figure}

Eventually, the 2d equation \cref{eq_mqs2d} is complemented with the homogeneous Dirichlet boundary condition
$$ u = 0 \text{ on } \partial D,$$
inherited from the no-flux assumption \cref{eq.noflux} made in the complete 3d model of \cref{sec.physmod}.  
As far as temporal boundary conditions are concerned, the time periodic setting implies that:
$$ \forall x = (x_1,x_2) \in D, \quad u(0,x) = u(T,x). $$

\begin{remark}
The mathematical analyses conducted in this article can be adapted straightforwardly to handle other time and boundary conditions about $u$, and notably the more classical initial condition $u(0,\cdot) = u_0$. 
Since the time periodic setting is the most relevant one for our purpose, we do not discuss those alternative instances for brevity.

\end{remark}

In engineering applications, the performance of the motor is often measured in terms of the average torque induced by the rotation of the motor shaft over the time period $(0,T)$. 
This quantity $\Tor(u)$ is evaluated in terms of the potential $u$ by the so-called Arkkio's method \cite{sadowski1992torque}:
%which corresponds to a rotation of 90 degrees. 
\begin{equation}\label{eqn:torque}
   \Tor(u) := \frac{1}{T} \frac{L \, \nu_a}{(r_s-r_r)}  \int_0^T \int_\Sigma \; Q(x_1,x_2)  \nabla u \cdot \nabla u \: \d x \d t
\end{equation}
where $L$ is the length of the true, 3d electric machine in the ${\mathbf e}_3$ direction, $\Sigma$ is an annulus with inner and outer radii $r_r < r_s$ lying in the air gap between the rotor and the stator, and the $2 \times 2$ matrix $Q(x)$ is defined by:
\begin{align}
\forall x= (x_1,x_2) \in D, \quad Q(x_1,x_2) = \frac{1}{\sqrt{x_1^2 + x_2^2}}
\left( \begin{array}{cc} x_1 x_2 & \frac{x_2^2 - x_1^2}{2} \\[0.5em] \frac{x_2^2 - x_1^2}{2} & -x_1 x_2 \end{array} \right).
\end{align}

%%%
\subsection{Description of the geometric structure of the motor}\label{sec.2dgeomotor}
%%%%

\noindent In this section, we get into more specifics about the constituent material phases of the motor and we
present the corresponding model governing the expressions of the conductivity and reluctivity coefficients $\sigma$ and $\nu$ in the formulation of the evolution problem \cref{eq_mqs2d}.
In the first \cref{sec.phaserest}, we describe the internal structure of the considered motor when it is at rest;
we then discuss its time evolution due to the rotation of the rotor in \cref{sec.phasemov}. 

%%%
\subsubsection{Structure of the motor at rest}\label{sec.phaserest}
%%%

\noindent
The structure of the 2d cross-section $D$ reflects the three-dimensional arrangement of the motor introduced in the previous \cref{sec.physmod}; it features three disjoint annulus-shaped regions:
$$ \overline D = \overline{\Drot} \cup \overline{\Dgap} \cup \overline{\Dstat},$$
where the open sets $D_{\text{rot}}$, $\Dgap$ and $D_{\text{stat}}$ stand for the rotor, the separating air gap and the stator, respectively. 
The stator $\Dstat$ consists of 
\begin{itemize}
\item A region $D_{\text{stat,f}}$ filled with ferromagnetic material;
\item A region $D_{\text{stat,a}}$ made of air;
\item The coils $D_{\text{stat,c}}$ featuring copper wires.
\end{itemize} 
The part $D_{\text{rot}}$ is composed of:
\begin{itemize}
\item A ferromagnetic core, occupying the region $\Omega$; 
\item Air, occupying the region $\Omega_{\text{a}}$;
\item Permanent magnets represented by the domain $\Dmag$.
\end{itemize}
In this study, the interface $\Gamma := \partial \Omega \cap \partial \Omega_a$ between the phases of the rotor made of ferromagnetic material and air is the center of attention when it comes to optimal design, 
while the region $\Dmag$ occupied by the permanent magnets and the stator $\Dstat$ are not subject to optimization, 
see \cref{fig_machine} for an illustration of this structure.
 
The conductivity and reluctivity coefficients $\sigma$ and $\nu$ have different expressions in the various phases featured in these decompositions. 
In order to emphasize their dependence on the actual shape $\Omega \subset D_{\text{rot}}$ 
of the optimized ferromagnetic core, they are labeled with an $_\Omega$ subscript.
\begin{align} \label{eq_sigmaNu}
    \sigma_\Omega(x) = \begin{cases}
                    \sigma_f & x \in \Omega \cup D_{\text{stat,f}}, \\
                    \sigma_a & x \in \Omega_{\text{a}} \cup \Dgap \cup D_{\text{stat,a}}, \\
                    \sigma_m & x \in \Dmag, \\
                    \sigma_c & x \in D_{\text{stat,c}},
                \end{cases} \:\:\: \text{ and }
    \nu_\Omega(x, s) = \begin{cases}
                    \hat \nu(s) & x \in \Omega \cup D_{\text{stat,f}}, \\
                    \nu_a & x \in \Omega_{\text{a}} \cup \Dgap \cup  D_{\text{stat,a}},, \\
                    \nu_m & x \in \Dmag, \\
                    \nu_c & x \in  D_{\text{stat,c}}.
                \end{cases}
\end{align}
In practice, the value of the electric conductivity $\sigma_a$ in the air region $\Omega_{\text{a}} \cup \Dgap \cup D_{\text{stat,a}}$ is negligible; 
moreover, due to the laminated structure of the part filled with ferromagneric material, which is made of iron sheets, 
and since the copper wires typically used in electric machines are insulated, the values of $\sigma$ are also negligible in $\Omega \cup  D_{\text{stat,f}}$ and $D_{\text{stat,c}}$: 
$$\sigma_f = \sigma_a = \sigma_c =0.$$
The reluctivity $\nu$ takes constant, positive values $\nu_a$, $\nu_m$ and $\nu_c$ in air, in the magnets, and in the wires, respectively.  
However, it is a function $\hat\nu: \R_+ \to \R_+$ of the amplitude of the gradient of the potential $u$ in the regions $\Omega$ and $D_{\text{stat,f}}$ occupied by ferromagnetic material. 
As evidenced in \cite{pechstein2006monotonicity}, the physical properties of the latter imply that
the mapping $s \mapsto \hat \nu(s) s$ in \cref{eq_sigmaNu} is strongly monotone and Lipschitz continuous, i.e.
\begin{equation}\label{eq.hypnu}
\text{There exists } 0< \underline \nu \leq \overline \nu < \infty \:\: \text{ s.t. } \:\: \forall s_1, \: s_2 \in \R_+, \quad  \left\{
\begin{array}{l}
  (\hat \nu(s_1)s_1 - \hat \nu(s_2)s_2)(s_1 - s_2) \geq    \underline \nu (s_1 - s_2)^2 ,\\[0.5em]
   |\hat \nu(s_1)s_1 - \hat \nu(s_2)s_2| \leq \; \overline \nu |s_1 - s_2|.
    \end{array}\right.
\end{equation}
In particular, \cref{eq.hypnu} implies that $\hat\nu$ is uniformly bounded away from $0$ and $\infty$: 
$$ \forall s \in \R_+, \quad \underline\nu \leq \hat\nu(s) \leq \overline \nu.$$

%%%
\subsubsection{Description of the motion of the rotor}\label{sec.phasemov}
%%%

\noindent The operation of the motor is analyzed through a representative time period $(0,T)$; during the latter,
$\Drot$ moves according to the smooth velocity field $v : (0,T) \times \overline \Drot \to \R^d$. 
The corresponding flow $\varphi : (0,T) \times \Drot \to \R^d$ is the solution to the following ordinary differential equation:
\begin{equation}\label{eq.defvel}
\forall  x \in \Drot,\quad \left\{
\begin{array}{cl}
 \frac{\partial \varphi}{\partial t}(t,x) =  v(t,\varphi(t,x)) & \text{on } (0,T), \\
 \varphi(0,x) = x;
 \end{array}\right.
 \end{equation}
we also let the shorthand $\varphi_t \equiv \varphi(t,\cdot)$. 
It follows from the standard theory of ordinary differential equations that: 
 \begin{equation}\label{eq.hypvphi1}
 \hspace{-1.5cm}
\begin{minipage}{0.9\textwidth}
\begin{itemize}
\item $\varphi$ is smooth over $[0,T] \times \overline\Drot$;
\item For each $t \in [0,T]$, the mapping $x \mapsto \varphi(t,x)$ is a diffeomorphism from $\overline\Drot$ onto itself;
\end{itemize} 
\end{minipage}
 \end{equation}
We also assume that the considered motion of the rotor is $T$-periodic, and so it holds: 
\begin{equation}\label{eq.hypvphi2}
 \forall x \in \Drot, \quad \varphi(0,x) = \varphi(T,x) = x.
 \end{equation}
Eventually, for convenience in the mathematical analysis, we extend $\varphi_t$ to a smooth diffeomorphism of $D$ such that:
 \begin{equation}\label{eq.hypvphi3}
\varphi_t(\Dgap) = \Dgap, \text{ and } \varphi_t \equiv \Id \text{ on } \Dstat.
 \end{equation}
 
This motion of the rotor induces an evolution in time of the constituent material phases of $\Drot$. 
Let $ \Omega(t) := \varphi_t (\Omega)$ and $\Gamma(t):= \varphi_t(\Gamma)$ (resp. $\Dmag(t) = \varphi_t(\Dmag)$, etc.) denote the deformed versions of $\Omega$ and $\Gamma$ (resp. $\Dmag$, etc.) induced by $\varphi_t$.
From the mathematical viewpoint, the material coefficients $\sigma$ and $\nu$ in \cref{eq.uOm} depend on time, as:
\begin{equation}\label{eq.snut}
\forall x \in D, \:\: s \in \R_+,  \quad \sigma_{\Omega(t)} (\varphi_t(x))= \sigma_\Omega(x)  \text{ and } \nu_{\Omega(t)} (\varphi_t(x),s)= \nu_\Omega(x,s).
\end{equation}\par\medskip

In practice, the cross-section $D$ of the motor is a two-dimensional disk with center $0$ and $\varphi_t$ corresponds to the rotation with origin $0$ and angle $\alpha(t) := \frac{2\pi t}{T}$: 
\begin{equation}\label{eq.phirot}
 \varphi_t(x) = R_{\alpha(t)} \left( 
\begin{array}{c}
x_1 \\
x_2
\end{array}
\right), \text{ where } R_{\alpha(t)} :=  \left( 
\begin{array}{cc}
\cos \alpha(t) & -\sin \alpha(t) \\ 
\sin\alpha(t) & \cos \alpha(t) 
\end{array}
\right).
 \end{equation}
With this definition, a simple calculation reveals that the velocity field $v$ is divergence-free and that the Jacobian determinant of $\varphi_t$ identically equals $1$:
\begin{equation}\label{eq.divvzero}
 \forall t \in [0,T], \:\: x \in D, \:\: \dv \:v(t,x) = 0, \text{ and }   \det \nabla\varphi_t(x) = 1.
 \end{equation}
 
\begin{remark}
Even though this very specific setting where the space dimension $d$ equals $2$ and the motion $\varphi(t,x)$ of the rotor accounts for a rotation is the relevant one in practice, 
the general case where $d$ is arbitrary  and $\varphi_t$ is an arbitrary smooth diffeomorphism of $D$ satisfying \cref{eq.hypvphi1,eq.hypvphi2,eq.hypvphi3}, with associated velocity field $v(t,x) = \frac{\partial \varphi}{\partial t}(t,\varphi_t^{-1}(x))$, does not pose any additional difficulty in the analysis. 
For this reason, in the subsequent developments, we retain generic notations for $d$ and $\varphi$ insofar as possible. 
\end{remark} \par\medskip

%%%%%%
\subsection{Notations and mathematical formulation of the magneto-quasi-static problem}\label{sec.not}
%%%%%

\noindent 
The following notations are adopted throughout the remainder of this article.
\begin{itemize}
\item We denote by $\Id: \R^d \to \R^d$ the identity mapping of $\R^d$ and by $\I \in \R^{d\times d}$ the identity matrix with size $d$.
\item For any time $t \in [0,T]$, $n_{\Omega(t)}$ (resp. $n$) stands for the unit normal vector to the deformed interface $\Gamma(t)$ (resp. to the interface at rest $\Gamma$), pointing outward $\Omega(t)$ (resp. outward $\Omega$).
\item We denote by 
$$ Q = (0,T) \times D, \text{ and }  Q_\Omega := \Big\{ (t,x) \in Q, \:\: x \in \Omega(t) \Big\}$$
the (open) space-time cylinders induced by $D$ and $\Omega$ over the time period $(0,T)$.
\item Let $\alpha: D \to \R$ be a smooth quantity on $\overline\Omega$ and $D \setminus \Omega$, which is possibly discontinuous across the interface $\Gamma$; we denote by 
$$ \alpha^\pm(x) = \lim\limits_{t > 0 \atop t \to 0} \alpha(x\pm tn(x))$$
 the one-sided limits of $\alpha$ at $x \in \Gamma$, from outside and inside $\Omega$, respectively.
 \item In particular, let $u : D \to \R$ be a smooth function on $\overline \Omega$ and $D \setminus\Omega$, which is possibly discontinuous at the interface $\Gamma$. 
 For any $x \in \Gamma$, we denote by 
 $$ \frac{\partial u^\pm}{\partial n}(x) = \lim\limits_{t \to 0 \atop t > 0} \nabla u (x \pm t n(x)) \cdot n(x) \text{ and } \nabla_\Gamma u^\pm(x) = \nabla u(x \pm tn(x)) -  \frac{\partial u^\pm}{\partial n}(x)  n(x),$$
 the one-sided normal derivatives and tangential gradients of $u$ at $x$.
 \item Let $w : D \to \R^d$ be a smooth enough vector field. We denote by 
 $$  w_\Gamma : \Gamma \to \R^d, \quad w_\Gamma(x) = w(x) - \Big( w(x) \cdot n(x) \Big) n(x)$$
 the tangential part of $w$ on $\Gamma$.
\item  Let $u : Q \to \R$ be a smooth enough scalar function of time and space; then, 
 \begin{itemize}
\item $\nabla u (t,x)$ denotes the gradient of $u$ with respect to the space variable $x$ only; 
\item $ \frac{\partial u}{\partial t}(t,x)$ is the partial derivative of $u$ with respect to the time variable, i.e. the derivative of the partial mapping $t \mapsto u(t,x)$ for fixed $x \in D$; 
\item $\frac{\d u}{\d t}(t,x)$ is the total (or material) derivative of $u$ according to the velocity field $v(t,x)$, that is: 
\begin{equation}\label{eq.totder}
 \frac{\d u}{\d t} (t,x) = \frac{\partial u}{\partial t}(t,x) + v(t,x) \cdot \nabla u (t,x);
 \end{equation}
in other terms, $\frac{\d u}{\d t}(t,x)$ is the derivative of the mapping $t \mapsto u(t,\varphi_t(y))$ evaluated at time $t$ and point $y =\varphi_t^{-1}(x)$.
\end{itemize} 
\item Let $w = (w_1,\ldots,w_d) :Q \to \R^d$ be a (smooth enough) vector-valued function of space and time. 
We denote by $\nabla w$, or often $\left[\nabla w \right]$, the derivative of $w$ with respect to the spatial variable, 
i.e. the $d\times d$ matrix field with entries:
$$ \left[\nabla w  \right]_{ij} = \frac{\partial w_i}{\partial x_j}, \quad i,j=1,\ldots,d.$$
  \end{itemize}

Summarizing the previous discussions and taking these notations into account, the transverse component of the vector potential inside the motor, which we denote by $u_\Omega: Q \to \R$ to emphasize its dependence on the phase $\Omega$ to be optimized, satisfies the following evolution problem:
\begin{equation}\label{eq.uOm}
\left\{
\begin{array}{cl}
\sigma_{\Omega(t)} \frac{\d u_\Omega}{\d t}  - \dv\left( \nu_{\Omega(t)} (x, \lvert \nabla u_\Omega \lvert) \nabla u_\Omega\right) = f & \text{in } (0,T) \times D, \\
u_\Omega(t,x) =0 & \text{for } t \in (0,T), \: x \in \partial D, \\ 
u_\Omega(0, x) = u_\Omega(T, x)& \text{for } x \in D_{\text{mag}},
\end{array}
\right.
\end{equation}
where the coefficients $\sigma_{\Omega(t)}$ and $\nu_{\Omega(t)}$ are given by \cref{eq_sigmaNu}. 
The precise mathematical setting of this problem and its well-posedness are discussed in \cref{sec.mathmag} below. 

\begin{remark}\label{rem.linear}
\noindent \begin{itemize}
\item In passing from the formulation \cref{eq_mqs2d} to \cref{eq.uOm}, we have absorbed the term $\dv(M^\perp)$ at the right-hand side of the problem \cref{eq_mqs2d} into the source $f$. 
This convenient notational simplification is slightly abusive though, as both terms $f$ and $\dv(M^\perp)$ are not exactly of the same mathematical nature: while $f(t,\cdot)$ will typically belong to $L^2(D)$ in our mathematical analysis, 
the quantity $M^\perp(t,\cdot)$ will be smooth on $\Dmag$ and vanish outside this set, so that $\dv(M^\perp)(t,\cdot) \in H^{-1}(D)$. Nevertheless, the subsequent developments are easily adapted to handle this term, see \cref{eq.usedvM} about this point. 
\item The subsequent mathematical discussions rely on the fact that the domain $\Dmag$ is non empty. If it were to be the case, \cref{eq.uOm} would simply boil down to decoupled elliptic problems.
\item As we have mentioned, the non linearity of the boundary value problem \cref{eq.uOm} (notably, the dependence of the reluctivity on $\lvert\nabla u\lvert$) is key in the correct modeling of the physical problem. However, the linear setting, where
$$ \hat\nu(s) \equiv \nu_f \text{ for some } \nu_f > 0,$$
will sometimes be considered in the mathematical setting, as it allows for a less technical exposition of the main ideas.
\end{itemize}
\end{remark}

%%%%%%%%%%%%%%%%%%%%%%%%%%%%%%%%%%%%%%%%%%%%%%%%%%%%%%%
%%%%%%%%%%%%%%%%%%%%%%%%%%%%%%%%%%%%%%%%%%%%%%%%%%%%%%%
\section{Generalities about shape optimization and treatment of an academic problem} \label{sec:genshape}
%%%%%%%%%%%%%%%%%%%%%%%%%%%%%%%%%%%%%%%%%%%%%%%%%%%%%%%
%%%%%%%%%%%%%%%%%%%%%%%%%%%%%%%%%%%%%%%%%%%%%%%%%%%%%%%

\noindent In this section, we enter into the optimal design framework of this article. 
After introducing the model shape optimization problem under scrutiny in \cref{sec.sopb}, 
we detail in \cref{sec.simplefunc} the calculation of a ``simple'' shape derivative as a handful preliminary to our subsequent analyses. 

%%%%%%%%%%%%%%%%%%%
\subsection{Shape optimization in a nutshell}\label{sec.sopb}
%%%%%%%%%%%%%%%%%%%

\noindent In this article, we aim to optimize the shape of the phase $\Omega$ made of ferromagnetic material
with respect to a physical criterion, that is
\begin{equation}\label{eq.sopb}
 \min\limits_{\Omega \subset \Drot} J(\Omega).
\end{equation}
The performance criterion under scrutiny $J(\Omega)$ depends on $\Omega$ via the transverse component of vector potential $u_\Omega(t,x)$ of the magnetic flux density, solution to the evolution problem \cref{eq.uOm}.
We consider a model objective function of the form
\begin{equation}\label{eq.defJOmpde}
 J(\Omega) = \int_Q j(u_\Omega(t,x)) \:\d x \d t,
 \end{equation}
where $j: \R \to \R$ is a smooth function, satisfying the growth conditions: 
$$\exists\: C >0, \:\: \forall u \in \R, \quad \lvert j(u) \lvert \leq C (1+\lvert u \lvert)^2, \:\: \lvert j^\prime(u) \lvert \leq C (1+\lvert u \lvert), \text{ and } \lvert j^{\prime\prime}(u) \lvert \leq C.$$ 
Note that different quantities of interest could be considered without much changes to our analysis.
Notably, $J(\Omega)$ could also depend on the gradient of $u_\Omega$, as in the torque functional $\Tor(u_\Omega)$ defined in \cref{eqn:torque}, see \cref{sec.numex}. 
Besides, constraints (e.g. on the volume of $\Omega$) could be added to the problem \cref{eq.sopb} without much change to the subsequent theoretical developments. \par\medskip

The numerical solution of the problem \cref{eq.sopb} usually calls for the derivative of the objective function $J(\Omega)$ with respect to the 
domain -- a notion which can be defined in various ways. 
Here, we rely on Hadamard's boundary variation method, see e.g. \cite{henrot2018shape,murat1976controle,sokolowski1992introduction} or \cite{allaire2020survey} for a recent overview.
In a nutshell, variations of a shape $\Omega \subset \Drot$ are considered under the form
$$ \Omega_\theta := (\Id + \theta) (\Omega) , \quad \theta \in \Winfty, \:\: || \theta ||_{\Winfty} < 1,$$
that is, $\Omega_\theta$ is a version of $\Omega$ whose points have been displaced according to the ``small' vector field $\theta$, see \cref{fig.hadamard}. 

\begin{figure}[!ht]
\centering
\includegraphics[width=0.5\textwidth]{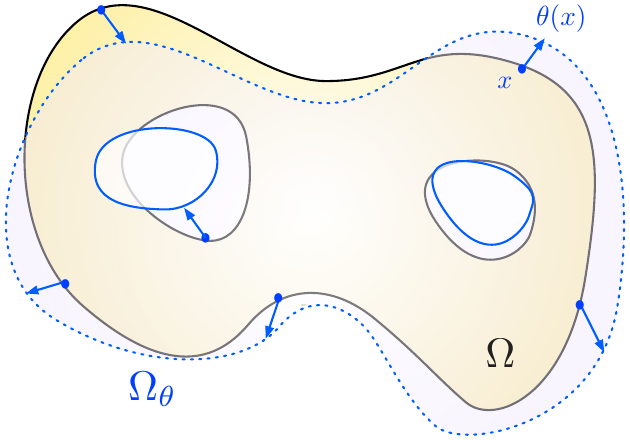}
\caption{\it The variation $\Omega_\theta$ of $\Omega$ featured in the method of Hadamard is obtained by displacing its points according to the ``small'' vector field $\theta$.}
\label{fig.hadamard}
\end{figure}

A function $F(\Omega)$ of the domain is then called shape differentiable at a particular domain $\Omega$ provided the underlying mapping $\theta \mapsto F(\Omega_\theta)$, 
from $\Winfty$ into $\R$, is Fr\'echet differentiable at $\theta=0$. The corresponding shape derivative $F^\prime(\Omega)(\theta)$ gives rise to the following expansion: 
\begin{equation}\label{eq.expFOmt}
 F(\Omega_\theta) = F(\Omega) + F^\prime(\Omega)(\theta) + \o(\theta), \text{ where } \frac{\lvert \o (\theta) \lvert }{\lvert\lvert \theta \lvert\lvert_{\Winfty}} \xrightarrow{\theta \to 0} 0.
 \end{equation}
 In particular, a deformation $\theta \in \Winfty$ such that $F^\prime(\Omega)(\theta) < 0$ is a descent direction for $F$ from $\Omega$, since for a small enough descent step $\tau >0$,
 $$ F(\Omega_{\tau \theta}) = F(\Omega) + \tau F^\prime(\Omega)(\theta) + \o(\tau),$$
 i.e. a ``small'' deformation of $\Omega$ according to $\theta$ produces a design $\Omega_{\tau \theta}$ which is ``better'' with respect to the criterion $F(\Omega)$.
 
In the present application, we only optimize the interface $\Gamma$ between the phases $\Omega$, $\Omega_{\text{a}}$ of the rotor, respectively made of ferromagnetic material and air. 
Accordingly, the deformations $\theta$ involved in Hadamard's method ought to be restricted to the subspace
$$\Tad := \Big\{ \theta \in \Winfty, \quad \theta = 0 \text{ on } \overline{\Dstat}, \:\: \theta \cdot n =0 \text{ on } \partial \Drot \cup \partial \Dmag \Big\}.$$

Let us end this section with a few informal words about the general structure of the shape derivative of a ``smooth enough'' function of the domain $F(\Omega)$, 
such as those considered in the sequel; we refer to e.g. \S 5.9 in \cite{henrot2018shape} for more details about this subject.  
Usually, several equivalent formulas are available for $F^\prime(\Omega)(\theta)$. 
A rigorous calculation usually yields the so-called volume (or distributed) expression of this derivative, which is of the form:
\begin{equation}\label{eq.thstructsd}
 F^\prime(\Omega)(\theta) = \int_D \Big( M_\Omega : \nabla \theta + s_\Omega \cdot \theta \Big) \:\d x,  
 \end{equation}
where $M_\Omega : D \to \R^{d\times d}$ and $s_\Omega : D \to \R^d$ are matrix and vector fields depending on $\Omega$, $u_\Omega$ and the function $F(\Omega)$, 
notably via a so-called adjoint state $p_\Omega$, solution to a problem similar to \cref{eq.uOm}, with a different right-hand side. 
Often, \cref{eq.thstructsd} can be turned into an equivalent, surface expression: 
\begin{equation}\label{eq.surfstruct}
 F^\prime(\Omega)(\theta) = \int_{\Gamma} v_\Omega (\theta \cdot n) \:\d s,
 \end{equation}
featuring a scalar field $v_\Omega :\partial \Omega \to \R$. 
This alternative structure encodes the intuitive fact that $F^\prime(\Omega)(\theta)$ solely depends on the normal component of the deformation $\theta$ -- roughly speaking, vector fields taking only tangential directions on the boundary $\partial \Omega$ only account for a ``reparametrization'' of $\Omega$. The surface form \cref{eq.surfstruct} is also helpful for numerical purpose, as it immediately reveals a descent direction for $F(\Omega)$. 
Indeed, letting $\theta = -v_\Omega n$ on $\partial \Omega$ immediately yields $F^\prime(\Omega)(\theta) < 0$. 

Mathematically, the surface formula \cref{eq.surfstruct} for $F^\prime(\Omega)(\theta)$ is often obtained from its volume counterpart \cref{eq.thstructsd} after integration by parts, 
assuming some regularity from the boundary $\partial \Omega$ and the functions $u_\Omega$, $p_\Omega$, see e.g. the proofs of \cref{lem.sdacad,prop.sdJOmmag} below. 
%
%%%%%%%%%%%%%%%%%%%%%%%%%%%%%%%%%%%%%%%%%%%%%%%%%%%%%%%
\subsection{Calculation of the shape derivative of a ``simple'' functional of the domain}\label{sec.simplefunc}
%%%%%%%%%%%%%%%%%%%%%%%%%%%%%%%%%%%%%%%%%%%%%%%%%%%%%%%

\noindent We now present the calculation of the derivative of a ``simple'' functional $J(\Omega)$, depending on the shape $\Omega$
via the integration of a fixed, smooth function $f(t,x)$ on the transformed versions $\Omega(t)= \varphi_t(\Omega)$ of $\Omega$ for $t \in (0,T)$. 
We seize this opportunity to introduce a few technical preliminaries useful throughout this article in \cref{sec.techprel}, before proceeding to the calculation, properly speaking, in \cref{sec.acadsd}.

%%%%%%%%%%%%%%%%%%%%%%%%%%%%%%%
\subsubsection{Technical preliminaries}\label{sec.techprel}
%%%%%%%%%%%%%%%%%%%%%%%%%%%%%%%

\noindent 
For any vector field $\theta \in \Tad$, let us introduce the corresponding space-time deformation mapping $\Theta : Q \to (0,T) \times \R^d$ 
\begin{equation}\label{eq.stdef}
 \Theta(t,x) := (\Theta_t(t,x) , \Theta_x(t,x)) =  (t, \varphi_t \circ (\Id + \theta) \circ \varphi^{-1}_t (x) ),
 \end{equation}
whose notation omits its dependence on $\theta$ for simplicity.
The space-time cylinder $Q_{\Omega_\theta}$ induced by $\Omega_\theta$ then reads:
$$ Q_{\Omega_\theta} := \Theta(Q_\Omega) = \Big\{ (t,x) \in Q, \:\: x \in \varphi_t (\Omega_\theta) \Big\}.$$
\begin{remark}
Loosely speaking, the space-time deformation mapping $\Theta$ transmits the effect of $\theta$ on the configuration at rest $\Omega$ (which is deformed into $\Omega_\theta$) to the rotated configurations $\Omega(t) = \varphi_t(\Omega)$ (which become $\varphi_t(\Omega_\theta)$). 
Contrary to the works \cite{BrueggerHarbrecht2022,BrueggerHarbrechtTausch2021}, we do not aim to find a time-dependent shape, but rather optimize the design of its configuration at rest with respect to a measure of performance depending on its known deformations $\varphi_t(\Omega)$.   
The action of $\Theta$ on the space-time cylinder $Q_\Omega$ is exemplified in \cref{fig_Qomega}. 
\end{remark}

\begin{figure}
\centering
 \includegraphics[width=.8\textwidth]{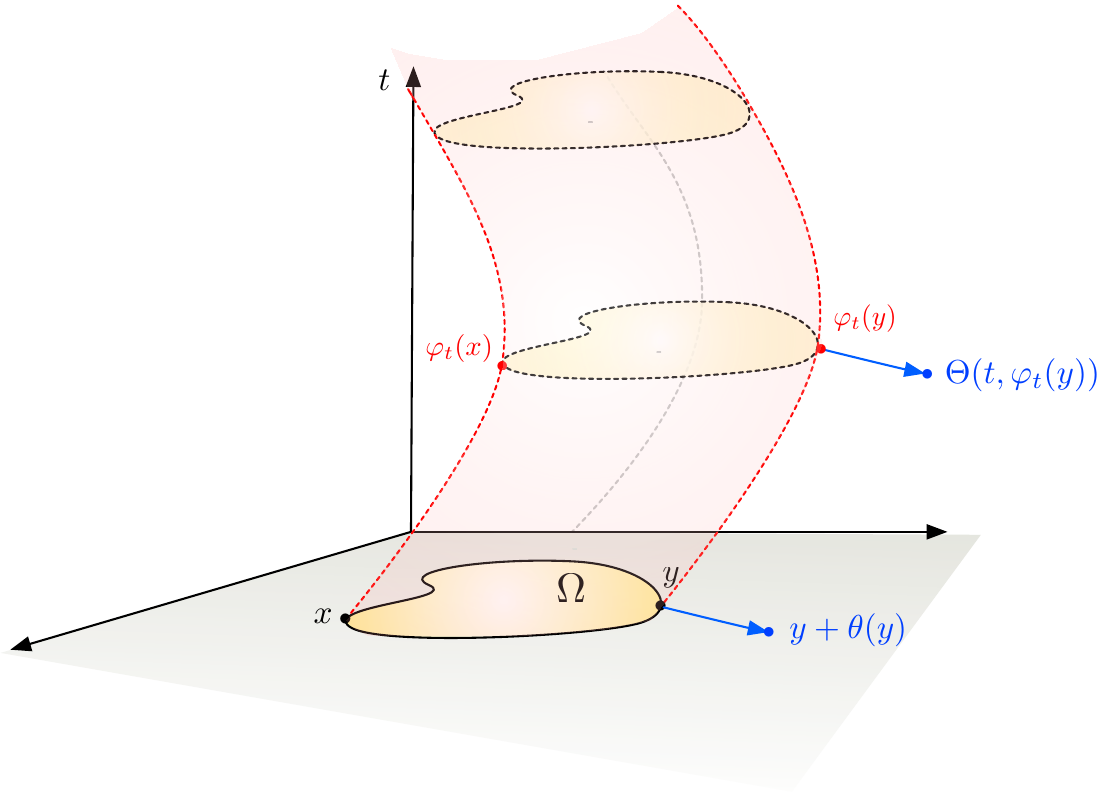}
  \caption{\it One deformation $\theta$ of the shape $\Omega$, with the associated deformation $\Theta$ of the space-time cylinder $Q_\Omega$.}
    \label{fig_Qomega}
\end{figure}

The derivative of $\Theta(t,x)$ with respect to the variables $(t,x)$ is the following $(d+1) \times (d+1)$ matrix field $F(\theta) : Q \to \R^{(d+1) \times (d+1)}$:
$$ F(\theta)= \left( \begin{array}{cc}
F_{tt}(\theta) & F_{tx}(\theta) \\
F_{xt} (\theta) & F_{xx} (\theta)
\end{array}
\right),$$
whose $1 \times 1$ and $1 \times d$ blocks $F_{tt}(\theta)$ and $F_{tx}(\theta)$ read:
$$ F_{tt}(\theta)(t,x) = \frac{\partial \Theta_t}{\partial t}(t,x) = 1, \quad F_{tx}(\theta)(t,x) = \nabla \Theta_t(t,x) = 0,$$
and whose $d \times 1$ and $d \times d$ blocks $F_{xt}(\theta)$ and $F_{xx}(\theta)$ are given by:
$$
\begin{array}{l}
\begin{array}{>{\displaystyle}cc>{\displaystyle}l}
F_{xt}(\theta)(t,x) &=& \frac{\partial \Theta_x}{\partial t}(t,x)\\[0.7em]
& =& \frac{\partial \varphi_t}{\partial t}( (\Id + \theta)(\varphi^{-1}_t(x)) ) + \Big[\nabla \varphi_t ((\Id +\theta)(\varphi^{-1}_t(x) ))\Big] \Big[(\I +\nabla \theta)(\varphi_t^{-1}(x)) \Big] \frac{\partial}{\partial t}(\varphi_t^{-1}(x)),
\end{array} \\[3em]
 \begin{array}{>{\displaystyle}cc>{\displaystyle}l}
F_{xx}(\theta)(t,x) &=& \nabla \Theta_x(t,x) \\[0.5em]
&=& \Big[\nabla \varphi_t ((\Id + \theta)(\varphi_t^{-1}(x)) )\Big] \:\Big[ (\I + \nabla \theta)(\varphi^{-1}_t(x)) \Big] \: \Big[\nabla \varphi_t^{-1}(x)\Big].
\end{array}
\end{array}$$
A simple calculation immediately yields the expression of the matrix field $F(\theta)^{-T}$:
\begin{equation}\label{eq.FthetamT} F(\theta)^{-T} =   \left( \begin{array}{cc}
1 &  b(\theta)^T \\
0 & F_{xx}(\theta)^{-T}
\end{array}
\right), \text{ where }
 b(\theta): Q \to \R^d \text{ is defined by } b(\theta) := - \Big[F_{xx}(\theta)\Big]^{-1} F_{xt}(\theta).
\end{equation}

Let now $u : Q \to \R$ be a smooth enough function; the derivative of the composite function $v := u \circ \Theta : Q \to \R$ satisfies:
$$ \left( 
\begin{array}{c}
\frac{\partial u}{\partial t}\\[0.25em]
\nabla u
\end{array}
\right) \circ \Theta  = F(\theta)^{-T} \left( 
\begin{array}{c}
\frac{\partial v}{\partial t}\\[0.25em]
\nabla v
\end{array}
\right),$$
and so
\begin{equation}\label{eq.chgvNtNx}
 \left(\frac{\partial u}{\partial t }\right) \circ \Theta = \frac{\partial v}{\partial t} + b(\theta) \cdot \nabla v, \: \text{ and } \:
(\nabla u )\circ \Theta = \Big[F_{xx}(\theta)\Big]^{-T} \nabla v.
 \end{equation}
 
We now list a series of useful technical identities in the calculation of shape derivatives.
These can be rigorously established along the lines of Chap. 5 in \cite{henrot2018shape}; for brevity, we limit ourselves to formal statements and computations which in particular omit the functional setting.
\begin{itemize}
\item The vector field $b(0) : Q \to \R^d$ equals: 
$$
 \begin{array}{>{\displaystyle}cc>{\displaystyle}l}
 b(0)(t,x) &=& -F_{xt}(0)(t,x)  \\[0.75em]
 &=& - \frac{\partial \varphi_t}{\partial t} (\varphi_t^{-1}(x)) - \Big[\nabla \varphi_t(\varphi_t^{-1}(x)) \Big] \frac{\partial }{\partial t} (\varphi_t^{-1}(x)) \\[0.75em]
 &=& -\frac{\partial }{\partial t} (\varphi_t(\varphi_t^{-1}(x)))  \\[0.75em]
 &=& 0,
 \end{array}$$
 where we have used the fact that 
 $$F_{xx}(0)(t,x) = \Big[\nabla \varphi_t(\varphi_t^{-1}(x))\Big]  \Big[\nabla \varphi_t^{-1}(x)\Big] = \nabla( \varphi_t \circ \varphi_t^{-1}(x)) = \I.$$
\item It follows easily from the above two relations that the mapping $\theta \mapsto b(\theta)$ is Fr\'echet differentiable at $\theta =0$, with derivative:
\begin{equation}\label{eq.bthetap}
b^\prime(0)(\theta)= -F_{xt}^\prime(0)(\theta).
\end{equation}
\item The derivatives of the mappings $\theta \mapsto F_{xt}(\theta)$ and $\theta \mapsto F_{xx}(\theta)$ at $\theta=0$ read: 
\begin{multline}\label{eq.Fxtt}
 F_{xt}^\prime(0)(\theta)(t,x) = \left[\nabla \left( \frac{\partial \varphi_t}{\partial t}\right)(\varphi_t^{-1}(x)) \right] \theta(\varphi_t^{-1}(x)) +\Big[ \nabla^2\varphi_t (\varphi_t^{-1}(x)) \theta(\varphi_t^{-1}(x)) \Big]\frac{\partial}{\partial t}(\varphi_t^{-1}(x)) \\
 +  \Big[ \nabla \varphi_t (\varphi_t^{-1}(x)) \Big] \Big[\nabla \theta(\varphi_t^{-1}(x)) \Big] \frac{\partial}{\partial t}(\varphi_t^{-1}(x)), 
 \end{multline}
and  
\begin{equation}\label{eq.Fxxt}
 F_{xx}^\prime(0)(\theta)(t,x) = \Big[ \nabla^2 \varphi_t (\varphi_t^{-1}(x)) \theta(\varphi_t^{-1}(x)) \Big] \Big[\nabla \varphi_t^{-1}(x)\Big] + \Big[\nabla \varphi_t(\varphi_t^{-1}(x))\Big] \Big[ \nabla \theta(\varphi_t^{-1}(x)) \Big] \Big[\nabla \varphi_t^{-1}(x)\Big].
 \end{equation}
\item The determinant $m(\theta): Q \to \R$ of the space-time change of variables induced by $\Theta$ in \cref{eq.stdef}, 
\begin{equation}\label{eq.defmtheta}
m(\theta) :=   \left\lvert \det F(\theta) \right\lvert ,
\end{equation} 
is a Fr\'echet differentiable function of $\theta$ at $0$ and:
 \begin{equation}\label{eq.expmtheta}
m^\prime(0)(\theta)(t,x) :=  \tr \left(\Big[ \nabla^2 \varphi_t( \varphi_t^{-1}(x) ) \theta(\varphi_t^{-1}(x))\Big] \Big[\nabla \varphi_t^{-1}(x)\Big] \right) + (\dv \theta)(\varphi_t^{-1}(x)).
  \end{equation}
\item Let $f: Q\to \R$ be a smooth scalar function. 
The mapping $\theta \mapsto f \circ \Theta$ is Fr\'echet differentiable at $\theta=0$, and its derivative $f_1(\theta)$ reads: 
   \begin{equation}\label{eq.fthetap}
f_1(\theta)(t,x) := \left(\Big[\nabla \varphi_t(\varphi_t^{-1}(x)) \Big]^T \nabla f (t,x) \right)\cdot \theta(\varphi_t^{-1}(x)) .
  \end{equation}
\item  Likewise, let $w: Q \to \R^d$ be a smooth vector field; then the mapping $\theta \mapsto w \circ \Theta$ is Fr\'echet differentiable at $\theta=0$, and its derivative $w_1(\theta)$ reads
   \begin{equation}\label{eq.vthetap}
w_1(\theta)(t,x) := \Big[\nabla w(t,x) \Big] \Big[\nabla \varphi_t(\varphi_t^{-1}(x)) \Big] \theta(\varphi_t^{-1}(x)) .
  \end{equation}
\item The matrix-valued mapping $A(\theta) : Q \to \R^{d\times d}$ defined by
\begin{equation}\label{eq.defAtheta} 
A( \theta) := m(\theta) \: F_{xx}^{-1}(\theta) F_{xx}^{-T}(\theta);
\end{equation}
is Fr\'echet differentiable at $\theta =0$, with derivative:
\begin{equation}\label{eq.Athetap}
 A^\prime(0)(\theta) = m^\prime(0)(\theta)\I - F_{xx}^\prime(0)(\theta) - F_{xx}^\prime(0)(\theta)^T.
\end{equation}
\end{itemize}

\begin{remark}
When the motion $\varphi_t$ is induced by a rotation as in \cref{eq.phirot}, some of the above expressions can be conveniently simplified. 
Indeed, it holds in this case:
$$        \frac{\d R_{\alpha(t)}}{\d t} = \alpha^\prime(t) \left( \begin{array}{cc}
                                                       - \sin(\alpha(t)) & - \cos(\alpha(t))\\
                                                       \cos(\alpha(t)) & - \sin(\alpha(t))
                                                      \end{array} \right)
        \qquad \mbox{ and } \qquad
        \Big[\nabla v(t,x) \Big]= \alpha'(t) \left( \begin{array}{cc}
        0 & - 1\\
        1 & 0
        \end{array} \right),
$$
and so:
$$
    \begin{array}{ccl}
        F_{xx}'(0)(\theta) &= & R_{\alpha(t)} \Big[ \nabla \theta(\varphi_t^{-1}(x)) \Big] R_{-\alpha(t)}, \\[0.5em]
        F_{xt}'(0)(\theta) &= & \left(\frac{\d R_{\alpha(t)} }{\d t}\right) \theta(\varphi_t^{-1}(x)) + R_{\alpha(t)} \Big[ \nabla \theta(\varphi_t^{-1}(x)) \Big] \left(\frac{\d R_{-\alpha(t)} }{\d t}\right) ,  
    \end{array}
      $$
 and for any smooth enough vector field $w : Q \to \R^d$, 
 $$
 w_1(\theta) = \Big[\nabla w(t,x) \Big]R_{\alpha(t)} \theta(\varphi_t^{-1}(x)).
 $$
\end{remark}
%%%%%%%%%%%%%%%%%%%%%%%%%%%%%%%
\subsubsection{An academic shape optimization problem}\label{sec.acadsd}
%%%%%%%%%%%%%%%%%%%%%%%%%%%%%%%

\noindent In this section, we calculate the derivative of the following shape functional:
\begin{equation}\label{eq.JOmacad}
J(\Omega) = \int_{Q_\Omega} f(t,x) \:\d x \d t,
\end{equation}
depending on the domain $\Omega$ via the integral of a smooth function $f \in \calC^\infty([0,T] \times \R^d)$ over the space-time cylinder $Q_\Omega$, which involves all the deformed version $\Omega(t)$ of $\Omega$, $t\in (0,T)$. 
Again, this quite academic question is a useful preliminary to the treatment of the more ``physical'' shape functionals described in \cref{sec_calcsdmqs}. 

\begin{lemma}\label{lem.sdacad}
The functional $J(\Omega)$ in \cref{eq.JOmacad} is shape differentiable at any bounded Lipschitz shape $\Omega \subset \R^d$, and its shape derivative reads, in volume form: 
\begin{multline}\label{eq.volumeadsd}
J^\prime(\Omega)(\theta) = \int_{Q_\Omega} \tr \left(\Big[ \nabla^2 \varphi_t( \varphi_t^{-1}(x) ) \theta(\varphi_t^{-1}(x))\Big] \Big[\nabla \varphi_t^{-1}(x)\Big]  \right) f(t,x) \:\d x \d t \\
 + \int_{Q_\Omega} (\dv \theta)(\varphi_t^{-1}(x))  f(t,x) \:\d x\d t + \int_{Q_\Omega}   \Big[\nabla \varphi_t(\varphi_t^{-1}(x)) \Big]^T \nabla f (t,x) \cdot \theta(\varphi_t^{-1}(x))  \:\d x \d t,
\end{multline}
or equivalently, in surface form: 
\begin{equation}\label{eq.surfacadsd}
J^\prime(\Omega)(\theta) = \int_{\partial \Omega} v_\Omega  \: \theta \cdot n \:\d s , \text{ where } v_\Omega(x) := \int_0^T |\det \nabla \varphi_t(x)| f(t,\varphi_t(x)) \:\d t. 
\end{equation}
\end{lemma}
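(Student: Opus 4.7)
The strategy is to compute the volume form \cref{eq.volumeadsd} by transporting the moving integration domain $Q_{\Omega_\theta}$ back to the fixed reference cylinder $Q_\Omega$ via the space-time deformation $\Theta$ defined in \cref{eq.stdef}, and then independently to obtain the surface form \cref{eq.surfacadsd} by a time-slice change of variables that reduces $J(\Omega)$ to a classical volume functional on $\Omega$.

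For the volume form, I would note that $Q_{\Omega_\theta} = \Theta(Q_\Omega)$ and that, for $\lvert\lvert \theta \lvert\lvert_{\Winfty}$ small enough, $\Theta$ is a $\calC^1$ diffeomorphism of $Q$, since each spatial slice $x\mapsto \varphi_t\circ(\Id+\theta)\circ\varphi_t^{-1}(x)$ is a diffeomorphism of $D$. The change of variables formula then yields
\begin{equation*}
J(\Omega_\theta) = \int_{Q_\Omega} (f\circ \Theta)(t,x)\, m(\theta)(t,x)\,\d x\,\d t.
\end{equation*}
Since $\Theta|_{\theta=0} = \Id_Q$ and $m(0)\equiv 1$, I would apply the product rule and invoke the Fr\'echet differentiability at $\theta=0$ of the two maps $\theta\mapsto f\circ\Theta$ and $\theta\mapsto m(\theta)$, whose derivatives are given by \cref{eq.fthetap} and \cref{eq.expmtheta} respectively. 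Summing both contributions gives
\begin{equation*}
J^\prime(\Omega)(\theta) = \int_{Q_\Omega}\Big[ f_1(\theta)(t,x) + f(t,x)\, m^\prime(0)(\theta)(t,x) \Big]\,\d x\,\d t,
\end{equation*}
which, after substitution of the explicit expressions, is exactly \cref{eq.volumeadsd}.

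For the surface form, I would follow a different route: in $J(\Omega) = \int_0^T\int_{\varphi_t(\Omega)} f(t,x)\,\d x\,\d t$, I perform the time-slice change of variables $x=\varphi_t(y)$ in the inner spatial integral and then apply Fubini's theorem. This yields
\begin{equation*}
J(\Omega) = \int_\Omega v_\Omega(y)\,\d y, \quad v_\Omega(y) = \int_0^T \lvert\det \nabla \varphi_t(y)\lvert\, f(t,\varphi_t(y))\,\d t,
\end{equation*}
where the integrand $v_\Omega$ is in fact independent of $\Omega$ and lies in $\calC^1(\overline D)$ by the smoothness of $\varphi$ and $f$. This reduces the problem to the classical shape derivative of a volume integral with fixed integrand (see e.g. \S 5.2 in \cite{henrot2018shape}), for which Hadamard's formula reads
\begin{equation*}
J^\prime(\Omega)(\theta) = \int_\Omega \dv(v_\Omega\, \theta)\,\d y = \int_{\partial \Omega} v_\Omega\, (\theta \cdot n)\,\d s,
\end{equation*}
by the divergence theorem for Lipschitz $\Omega$ and $\theta\in\Winfty$.

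The main obstacle is the technical step of justifying Fr\'echet differentiability of the two maps $\theta\mapsto f\circ\Theta$ and $\theta\mapsto m(\theta)$ from $\Winfty$ into $L^\infty(Q)$, together with the uniform remainder estimates needed to pass $\frac{\d}{\d\theta}$ under the integral sign. These verifications are standard consequences of the smoothness of $\varphi$ and $f$ and of the composition rules for $\calC^1$ maps, and are encoded in the preliminary identities of \cref{sec.techprel}; once they are in hand, the rest of the argument is essentially computational. The consistency of the two formulas \cref{eq.volumeadsd} and \cref{eq.surfacadsd} could also be checked directly by integrating the volume form by parts, but this is made unnecessary by the fact that each form admits its own clean derivation.
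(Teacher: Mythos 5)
Your derivation of the volume form \cref{eq.volumeadsd} is exactly the paper's: change of variables by $\Theta$, then differentiation of $m(\theta)$ and $f\circ\Theta$ at $\theta=0$ using \cref{eq.expmtheta,eq.fthetap}. For the surface form, however, you take a genuinely different and cleaner route. The paper starts from the volume form \cref{eq.volumeadsd}, rewrites it as nested integrals, changes variables by $\varphi_t$ in the spatial integral, integrates by parts the single term containing $\dv\theta$, and then must verify -- by an ``elementary, albeit tedious calculation'' that it omits -- that the leftover distributed and tangential contributions $r(\theta)$ cancel, in line with the structure theorem. You instead observe that the time-slice change of variables $x=\varphi_t(y)$ followed by Fubini collapses $J(\Omega)$ into $\int_\Omega v_\Omega\,\d y$ with a fixed, $\Omega$-independent, smooth integrand, so that the surface form is just the textbook Hadamard formula for $\theta\mapsto\int_{(\Id+\theta)(\Omega)}g\,\d x$; uniqueness of the Fr\'echet derivative then guarantees consistency with the volume form without any cancellation check. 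Your shortcut buys a complete, non-tedious proof of \cref{eq.surfacadsd}; what it does not buy is practice with the volume-to-surface integration-by-parts machinery, which is the real point of this academic warm-up, since for the physical functional of \cref{prop.sdJOmmag} the integrand depends on $\Omega$ through $u_\Omega$ and $p_\Omega$ and no such direct reduction is available. Both arguments are correct.
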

\begin{proof}
For any small enough deformation $\theta \in \Tad$, the definition of $J(\Omega_\theta)$ reads:
 $$ J(\Omega_\theta) = \int_{Q_{\Omega_\theta}} f (t,x) \:\d x \d t.$$
 Using a change of variables based on the mapping $\Theta$ in \cref{eq.stdef} and the fact that, by definition $\Theta^{-1}(Q_{\Omega_\theta}) = Q_\Omega$, we obtain:
 $$ J(\Omega_\theta) = \int_{Q_{\Omega}} m(\theta) ( f\circ \Theta) (t,x) \:\d x \d t,$$
 where we recall the notation $m(\theta) = \left\lvert \det F(\theta)\right\lvert$.
Combining the expansions \cref{eq.expmtheta,eq.fthetap} of the two terms in the above integrand, 
we see that the mapping $\theta \mapsto J(\Omega_\theta)$ is differentiable at $\theta=0$, with derivative 
\begin{multline*}
J^\prime(\Omega)(\theta) = \int_{Q_\Omega} \tr \left( \Big[ \nabla^2 \varphi_t( \varphi_t^{-1}(x) ) \theta(\varphi_t^{-1}(x))\Big] \Big[\nabla \varphi_t^{-1}(x)\Big]  \right) f(t,x) \:\d x \d t \\
 + \int_{Q_\Omega} (\dv \theta)(\varphi_t^{-1}(x))  f(t,x) \:\d x\d t + \int_{Q_\Omega}   \Big[\nabla \varphi_t(\varphi_t^{-1}(x)) \Big]^T \nabla f (t,x) \cdot \theta(\varphi_t^{-1}(x))  \:\d x \d t,
\end{multline*}
which is the announced formula \cref{eq.volumeadsd}.

To infer the surface expression \cref{eq.surfacadsd}, we first express \cref{eq.volumeadsd} as nested integrals over $(0,T)$ and $\varphi_t(\Omega)$ and we use a change of variables in terms of the mapping $\varphi_t$ in the spatial integral to obtain
\begin{multline}\label{eq.vformacadchgv}
J^\prime(\Omega)(\theta) = \int_{0}^T \int_\Omega \lvert \det \nabla \varphi_t(x) \lvert \tr \left( \Big[ \nabla^2 \varphi_t( x ) \theta(x)\Big] \Big[\nabla \varphi_t^{-1}(\varphi_t(x))\Big]  \right) f(t, \varphi_t(x)) \:\d x \d t \\
 + \int_{0}^T \int_\Omega  |\det \nabla \varphi_t(x)| (\dv \theta)(x)  f(t,\varphi_t(x)) \:\d x\d t 
  + \int_{0}^T \int_\Omega  |\det \nabla \varphi_t(x)|  \Big[\nabla \varphi_t(x) \Big]^T \nabla f (t,\varphi_t(x)) \cdot \theta(x)  \:\d x \d t.
\end{multline}
We now proceed according to the strategy summarized in e.g. \cite{allaire2020survey}: integrating by parts in all the terms of the above expression containing derivatives of $\theta$ yields an expression of the form:
$$ J^\prime(\Omega)(\theta) = \int_\Omega M_\Omega \cdot \theta \:\d x + \int_\Gamma v_\Omega (\theta \cdot n) \: \d s + \int_\Gamma t_\Omega \cdot \theta_\Gamma \:\d s,$$
where $M_\Omega: D \to \R^d$, $v_\Omega : \Gamma \to \R$ and $t_\Omega : \Gamma \to \R^d$ depend on the function $f$ and on $\Omega$. 
If we expect that $J^\prime(\Omega)(\theta)$ admits an expression with the desirable structure \cref{eq.surfstruct}, the vector fields $M_\Omega$ and $t_\Omega$ must vanish. 
That this indeed holds is eventually verified by an elementary, albeit tedious calculation. 

In the present case, an integration by parts in the second integral on the right-hand side of \cref{eq.vformacadchgv} (which is the only one involving derivatives of $\theta$)
produces the following expression:
$$ J^\prime(\Omega)(\theta) = \int_0^T \int_{\Gamma} \lvert \det \nabla \varphi_t(x) \lvert f(t,\varphi_t(x)) \: \theta(x) \cdot n(x) \:\d s(x) \d t + r(\theta), $$
where $r(\theta)$ is a collection of integrals posed on the domain $\Omega$ involving only $\theta$ (and not its derivatives). 
A simple, albeit tedious calculation which is omitted for brevity reveals that $r(\theta)$ actually vanishes, which allows to conclude.
\end{proof}

%%%%%%%%%%%%%%%%%%%%%%%%%%%%%%%%%%%%%%%%%%%%%%%%%%%%%%%
%%%%%%%%%%%%%%%%%%%%%%%%%%%%%%%%%%%%%%%%%%%%%%%%%%%%%%%
\section{Calculation of the shape derivative in the magneto-quasi-static context} \label{sec_calcsdmqs}
%%%%%%%%%%%%%%%%%%%%%%%%%%%%%%%%%%%%%%%%%%%%%%%%%%%%%%%
%%%%%%%%%%%%%%%%%%%%%%%%%%%%%%%%%%%%%%%%%%%%%%%%%%%%%%%

\noindent In this section, we turn to the calculation of the shape derivative of a ``physical'' objective function $J(\Omega)$
which depends on the shape $\Omega$ via the potential $u_\Omega$, 
solution to the magneto-quasi-static problem \cref{eq.uOm} where  $\Omega$ represents the phase occupied by ferromagnetic material. 
After sketching some mathematical preliminaries about this evolution problem in \cref{sec.mathmag} and a few technical features about its solution in \cref{sec.propsuOm}, 
we proceed to the calculation of the shape derivative $J^\prime(\Omega)(\theta)$ in \cref{sec.magsd}, under the simplifying assumption that the reluctivity $\nu$ of the ferromagnetic material is constant, in which case \cref{eq.uOm} becomes linear. We eventually provide the formal extension of this result in the general situation of a non linear material in \cref{sec.extder}.

%%%%%%%%%%%%%%%%%%%%%%%%%%%%%%%
\subsection{Mathematical preliminaries about the magneto quasi-static evolution problem}\label{sec.mathmag}
%%%%%%%%%%%%%%%%%%%%%%%%%%%%%%%

\noindent
The analysis of the evolution problem \cref{eq.uOm} brings into play suitable functional spaces for functions depending on the time and space variables. For the sake of convenience, 
a few basic facts about these are recalled in \cref{app.vdistrib};  we refer to e.g. \S 1 in Chap. XVIII of \cite{dautray1992evolution}, or Chap. 23 in \cite{zeidler2013linear} for more exhaustive presentations. \par\medskip

Let us consider the functional space $L^2(0,T; H^1_0(D))$, whose dual $L^2(0,T; H^1_0(D))^*$ can be identified with $L^2(0,T;H^{-1}(D))$, see \cref{eq.iddualstspace}.
We also introduce
 $$ W := \left\{ u \in L^2(0,T; H^1_0(D) ) \text{ s.t. } \sigma_{\Omega(t)} \frac{\partial u}{\partial t} \in L^2(0,T; H^{-1}(D)) \right\},$$
which is a Hilbert space when equipped with the norm
 $$ || u ||_{W}^2 := \int_0^T \lvert\lvert u(t,\cdot) \lvert\lvert^2_{H^1_0(D)}\:\d t + \int_0^T \left \lvert \left\lvert  \sigma_{\Omega(t)} \frac{\partial u}{\partial t}(t,\cdot)\right\lvert\right\lvert^2_{H^{-1}(D)}  \:\d t.$$
Intuitively, $W$ is a variant of the more classical space $W(0,T;H^1_0(D),L^2(D))$ defined in \cref{eq.WOTVH}, 
which is adapted to handle the fact that the coefficient $\sigma_{\Omega(t)}$ vanishes outside the region $\Dmag(t) \subset D$, see \cref{eq_sigmaNu}.
The properties of $W(0,T;H^1_0(D), L^2(D))$ summarized in \cref{lem.densW} are also verified by $W$, as the proof of this result can be straightforwardly adapted to the present case. 
In particular, any element $u \in W$ induces a continuous mapping $[0,T] \ni t \mapsto u(t,\cdot) \in L^2(\Dmag)$, which allows to introduce the closed subspace $\Wper$ of $W$ defined by:
 \begin{equation}\label{eq.Xper}
  \Wper := \Big\{ u \in W \text{ s.t. } u(t=0, \cdot) = u(t=T,\cdot)  \text{ on } \Dmag\Big\}.
  \end{equation}
 
 \begin{remark}
\noindent \begin{itemize}
\item As the conductivity $\sigma_{\Omega(t)}$ equals $\sigma_m > 0$ on $\Dmag(t)$ and vanishes outside $\Dmag(t)$,
the time derivative $\frac{\partial}{\partial t}(u(t,\varphi_t(\cdot)))$ of a function $u \in W$ belongs to $L^2(0,T; H^1(\Dmag)^*)$. 
\item The time periodicity condition satisfied by functions $u \in \Wper$ only holds in $\Dmag$, as an application of \cref{lem.densW} (ii). 
\end{itemize}
\end{remark}
 
Using this language, the magneto-quasi-static problem \cref{eq.uOm} can be expressed in variational form:
 \begin{multline}\label{eq.pbuOm}
\text{Search for } u_\Omega \in \Wper, \text{ s.t. } \forall w \in L^2(0,T;H^1_0(D)),\\ 
  \int_Q \sigma_{\Omega(t)} \left( \frac{\partial u_\Omega}{\partial t} + v \cdot \nabla u_\Omega \right) w \: \d x \d t + \int_{Q} \nu_{\Omega(t)}(x,\lvert\nabla u_\Omega \lvert) \nabla u_\Omega \cdot \nabla w \:\d x \d t  = \int_Q f w \:\d x\d t,
 \end{multline}
or equivalently:
\begin{multline}\label{eq.pbuOmae}
\text{Search for } u_\Omega \in \Wper, \text{ s.t. for a.e. } t \in (0,T), \\ 
\forall w \in H^1_0(D), \quad \int_D \sigma_{\Omega(t)} \left( \frac{\partial u_\Omega}{\partial t} + v \cdot \nabla u_\Omega \right) w \: \d x + \int_{D} \nu_{\Omega(t)}(x,\lvert\nabla u_\Omega \lvert) \nabla u_\Omega \cdot \nabla w \:\d x  = \int_D f w \:\d x.  
 \end{multline}

 The following theorem deals with the well-posedness of the variational problem \cref{eq.pbuOm}. 
 \begin{theorem}\label{th.wellposed}
 Let $\varphi_t$ be a time-dependent diffeomorphism of $\overline D$ satisfying \cref{eq.hypvphi1,eq.hypvphi2,eq.hypvphi3}. 
 For any source $f \in L^2(0,T;H^{-1}(D))$, the evolution problem \cref{eq.pbuOm} has a unique solution $u_\Omega \in \Wper$, which has a Lipschitz dependence on $f$: 
 there exists a constant $C >0$ such that the solutions $u_1$, $u_2 \in \Wper$ associated to the respective right-hand sides $f_1$, $f_2 \in L^2(0,T; H^{-1}(D))$ satisfy: 
 $$ \lvert\lvert u_1 - u_2 \lvert\lvert_{W} \leq C \lvert\lvert f_1 - f_2 \lvert\lvert_{L^2(0,T; H^{-1}(D))}.$$
 \end{theorem}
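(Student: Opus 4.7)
The strategy is three-fold: (i) a Lagrangian change of variables removes the moving geometry and eliminates the convective term, turning \cref{eq.pbuOm} into a monotone parabolic--elliptic problem on the fixed domain $D$ with time-independent weight; (ii) classical monotone operator theory produces a well-posed initial value problem for each $u_0 \in L^2(\Dmag)$; (iii) the associated Poincar\'e map from $L^2(\Dmag)$ into itself is a strict contraction, whose unique fixed point yields the unique element of $\Wper$.

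For step (i), I set $\tilde u(t,x) := u_\Omega(t, \varphi_t(x))$ on $Q$. Definition \cref{eq.totder} gives $\partial_t \tilde u(t,x) = (\frac{\d u_\Omega}{\d t})(t, \varphi_t(x))$, so by \cref{eq.snut} the convective--parabolic contribution $\sigma_{\Omega(t)}\frac{\d u_\Omega}{\d t}$ transforms into $\sigma_\Omega \partial_t \tilde u$, with the weight $\sigma_\Omega$ now time-\emph{independent}. A direct change of variables in the weak form of the diffusion term rewrites it as $\int_D \hat\nu_\Omega(x,|M(t,x)^{1/2}\nabla \tilde u|)\, M(t,x)\nabla \tilde u \cdot \nabla \tilde w \,\d x$, where $M(t,x) := |\det \nabla \varphi_t|\,(\nabla \varphi_t)^{-1}(\nabla \varphi_t)^{-T}$ is uniformly positive definite thanks to \cref{eq.hypvphi1,eq.hypvphi3}, and reduces to $\I$ in the physical rotational case by \cref{eq.phirot,eq.divvzero}. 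Combined with \cref{eq.hypnu}, this yields a nonlinear operator $\calA(t,\cdot) : H^1_0(D) \to H^{-1}(D)$ which is measurable in $t$ and uniformly strongly monotone and Lipschitz in its argument.

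For step (ii), I address the initial value problem $\tilde u(0,\cdot) = u_0 \in L^2(\Dmag)$ via a standard Galerkin approximation. Testing with $\tilde u$ itself and integrating over $(0,t)$ provides the uniform $L^2(0,T;H^1_0(D))$ bound; comparison with the equation then yields $\sigma_\Omega \partial_t \tilde u \in L^2(0,T; H^{-1}(D))$. Monotonicity allows a Minty--Browder passage to the limit in the nonlinearity, and testing the difference of two solutions against itself yields, via \cref{eq.hypnu} and Gronwall,
\[
\tfrac12 \|(\tilde u_1 - \tilde u_2)(T)\|_{L^2(\Dmag)}^2 + \underline\nu \|\nabla(\tilde u_1 - \tilde u_2)\|_{L^2(Q)}^2 \leq \tfrac12 \|u_{0,1} - u_{0,2}\|_{L^2(\Dmag)}^2 + C \|f_1 - f_2\|_{L^2(0,T;H^{-1}(D))}^2.
\]
For step (iii), taking $f_1=f_2$ and combining with the Poincar\'e inequality $\|\cdot\|_{L^2(\Dmag)} \leq C_P \|\nabla \cdot\|_{L^2(D)}$ valid on $H^1_0(D)$ converts this into an exponential decay $\|(\tilde u_1 - \tilde u_2)(T)\|_{L^2(\Dmag)}^2 \leq e^{-\lambda T} \|u_{0,1}-u_{0,2}\|_{L^2(\Dmag)}^2$ with an explicit $\lambda>0$, so that the Poincar\'e map $u_0 \mapsto \tilde u(T,\cdot)|_{\Dmag}$ is a strict contraction on $L^2(\Dmag)$. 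Banach's fixed point theorem then furnishes a unique periodic initial datum, hence a unique $u_\Omega \in \Wper$; the global Lipschitz dependence on $f$ follows by reapplying the monotonicity calculation directly to two solutions of \cref{eq.pbuOm}, where periodicity cancels the temporal boundary terms.

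The main obstacle is the contractivity of the Poincar\'e map, made delicate by the degeneracy $\sigma_\Omega \equiv 0$ outside $\Dmag$: the dissipation $\underline\nu \int |\nabla \tilde u|^2$ lives on all of $D$, whereas the quantity one needs to contract is only the $L^2(\Dmag)$ trace at $t=T$. The Poincar\'e--type inequality transferring global gradient control into local $L^2$ control of that trace closes this gap, and it is precisely the hypothesis $\Dmag \neq \emptyset$ stressed in \cref{rem.linear} that turns this transfer into genuine exponential decay and hence yields a contraction constant strictly less than~$1$.
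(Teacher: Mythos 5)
Your proposal is correct in outline, and after the common first move (the Lagrangian change of variables $\tilde u(t,x)=u_\Omega(t,\varphi_t(x))$, which both you and the paper use to freeze the geometry and absorb the convective term into $\sigma_\Omega \partial_t \tilde u$) it diverges genuinely from the paper's argument. The paper does \emph{not} attack the degenerate problem on all of $D$: it first reduces \cref{eq.varfubarapp} to a uniformly parabolic problem posed on $\Dmag$ alone, by introducing the nonlinear elliptic extension operator $\calL_t$ on $D\setminus\overline{\Dmag}$ and the induced Dirichlet-to-Neumann-type operator $\calB$, proves that $\calA+\calB$ is strongly monotone and Lipschitz uniformly in $t$ (via \cref{lem.wpbvpO}), and then invokes an abstract existence theorem for time-periodic monotone evolution equations (\cref{th.pbevolth}). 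You instead keep the degenerate parabolic--elliptic structure on $D$, solve the initial value problem by Galerkin/Minty--Browder, and obtain periodicity by showing the period map $u_0\mapsto\tilde u(T,\cdot)|_{\Dmag}$ is a strict contraction on $L^2(\Dmag)$ through the energy estimate combined with the Poincar\'e inequality on $H^1_0(D)$ restricted to $\Dmag$. Your route is more self-contained and makes the mechanism behind periodic well-posedness quantitative (exponential decay of differences, with the role of $\Dmag\neq\emptyset$ correctly identified), and it is essentially the argument hiding inside the abstract theorem the paper cites; the price is that the Galerkin step for the degenerate system is a differential-algebraic system (the mass matrix $\int_D\sigma_\Omega\phi_i\phi_j\,\d x$ is singular) and deserves more care than you give it, and the chain rule $\frac{\d}{\d t}\|\sigma_\Omega^{1/2}e\|_{L^2}^2=2\langle\sigma_\Omega\partial_t e,e\rangle$ needs the degenerate analogue of \cref{lem.densW}, which the paper does assert. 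The paper's reduction to $\Dmag$ buys a uniformly parabolic problem and a one-line appeal to the literature, at the cost of establishing monotonicity of the nonlocal operator $\calB$. One small imprecision on your side: the transformed nonlinearity is $\hat\nu(|B_t^T\nabla\tilde u|)\,|\det\nabla\varphi_t|\,B_tB_t^T\nabla\tilde u$ with $B_t=\nabla\varphi_t^{-1}$, not $\hat\nu(|M^{1/2}\nabla\tilde u|)M\nabla\tilde u$; the two coincide only when $\det\nabla\varphi_t=1$ (true for the rotation \cref{eq.phirot}, but your monotonicity claim should be phrased for the form the change of variables actually produces, as in \cref{lem.wpbvpO}).
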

The proof of this result is sketched in \cref{app.varft}. 
 Let us point out that the analysis of a similar statement about a linear version of \cref{eq.pbuOm} was presented recently in \cite{GanglGobrialSteinbach2023}.

 %%%%%
 \subsection{A few properties of the solution to the magneto-quasi-static problem}\label{sec.propsuOm}
 %%%%%%
 
 \noindent 
This section is devoted to a few useful technical facts about functions in the space $W$, and more precisely about the solution $u_\Omega$ to \cref{eq.pbuOm}.
Throughout, $\varphi_t$ is a smooth time-dependent diffeomorphism of $\overline D$ satisfying \cref{eq.hypvphi1,eq.hypvphi2,eq.hypvphi3} 
and the coefficients $\sigma_{\Omega(t)}$ and $\nu_{\Omega(t)}$ are given by \cref{eq_sigmaNu}. \par\medskip

The first remark is an avatar of the so-called Reynolds (or transport) theorem: 
\begin{lemma}\label{lem.reynolds}
Let $u ,p \in W$; the following identity holds in the sense of distributions on $(0,T)$:
\begin{multline*} \frac{\d}{\d t} \left( \int_D \sigma_{\Omega(t)}(x) u(t,x) p(t,x) \:\d x\right) = 
\int_D \sigma_{\Omega(t)}(x) \frac{\d u}{\d t}(t,x) p(t,x) \:\d x \\
+ \int_D \sigma_{\Omega(t)}(x) u(t,x) \frac{\d p}{\d t}(t,x) \:\d x +  \int_Q \sigma_{\Omega(t)} \dv v(t,x) u(t,x) p(t,x) \:\d x \d t,
\end{multline*}
where we recall the notation $\frac{\d u}{\d t}$, $\frac{\d p}{\d t}$ in \cref{eq.totder} for the total time derivatives of $u$ and $p$.
\end{lemma}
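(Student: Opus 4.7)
The strategy is to pull the spatial integral back to a reference configuration using the flow $\varphi_t$, differentiate there (where the measure and $\sigma_\Omega$ are time-independent), and push forward again. The identity will be established first for smooth $u$ and $p$ and then extended to $u,p\in W$ by a density argument analogous to the one used for $W(0,T;H^1_0(D),L^2(D))$ in \cref{app.tech}.

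First I would reduce to smooth test functions. The distributional identity amounts to showing, for every $\phi\in\calC_c^\infty(0,T)$, that
\begin{equation*}
-\int_0^T\!\!\int_D \sigma_{\Omega(t)} u\, p\, \phi'(t)\,\d x\,\d t
= \int_0^T\!\!\phi(t)\!\int_D \sigma_{\Omega(t)}\left(\tfrac{\d u}{\d t} p + u \tfrac{\d p}{\d t} + (\dv v)\, u p\right)\d x\,\d t.
\end{equation*}
Both sides are continuous bilinear forms in $(u,p)\in W\times W$, so by density of $\calC^\infty([0,T]\times\overline D)$ in $W$ it suffices to establish the identity for smooth $u,p$.

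Next, for smooth $u,p$, I would perform the change of variables $x=\varphi_t(y)$. Setting $U(t,y)=u(t,\varphi_t(y))$, $P(t,y)=p(t,\varphi_t(y))$ and $J(t,y)=|\det\nabla\varphi_t(y)|$, the key observation (see \cref{eq.snut}) is that $\sigma_{\Omega(t)}(\varphi_t(y))=\sigma_\Omega(y)$ is independent of time. Hence
\begin{equation*}
I(t):=\int_D \sigma_{\Omega(t)}(x)\,u(t,x)\,p(t,x)\,\d x = \int_D \sigma_\Omega(y)\,U(t,y)\,P(t,y)\,J(t,y)\,\d y.
\end{equation*}
Since the spatial domain of integration and the factor $\sigma_\Omega$ no longer depend on $t$, I can differentiate under the integral sign. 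Using the chain rule, $\partial_t U(t,y)=\frac{\d u}{\d t}(t,\varphi_t(y))$ and $\partial_t P(t,y)=\frac{\d p}{\d t}(t,\varphi_t(y))$ in view of \cref{eq.totder}; and Jacobi's formula for the derivative of the determinant yields $\partial_t J(t,y)=(\dv v)(t,\varphi_t(y))\,J(t,y)$. Collecting the three contributions and reverting the change of variables $y=\varphi_t^{-1}(x)$ produces exactly the claimed right-hand side.

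The main obstacle is the density step. The material derivative $\frac{\d u}{\d t}$ is only controlled on the support of $\sigma$, that is on the rotated magnet region $\Dmag(t)$, through the specific norm of $W$ that bounds $\sigma_{\Omega(t)}\partial_t u$ in $L^2(0,T;H^{-1}(D))$. To regularize $u\in W$, I would first pull back via $\varphi_t$ to a fixed configuration (so that $\sigma_\Omega$ is stationary), regularize there by a standard convolution in the $(t,y)$ variables after extension across $t=0$ and $t=T$ using the periodicity on $\Dmag$, and then push forward. Once this approximation procedure is set up, continuity of each term of the identity with respect to the $W$-topology — including the Eulerian material-derivative terms, via the identity $\sigma_{\Omega(t)}\frac{\d u}{\d t}=\sigma_{\Omega(t)}\partial_t u+\sigma_{\Omega(t)} v\cdot\nabla u$ — allows passage to the limit and concludes the proof.
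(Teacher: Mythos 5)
Your proposal is correct and follows essentially the same route as the paper's proof: reduce to smooth $u,p$ by density in $W$ (the paper invokes its adapted version of \cref{lem.densW}\,(i)), pull the integral back through $\varphi_t$ so that $\sigma_\Omega$ and the domain become time-independent, differentiate under the integral using the chain rule and Jacobi's formula for $\partial_t|\det\nabla\varphi_t|$ (the paper spells out the trace identity $\tr\bigl(\nabla\varphi_t^{-1}\,\nabla(\partial_t\varphi_t)\bigr)\circ\varphi_t^{-1}=\dv v$), and change variables back. The only cosmetic difference is that you elaborate on how the approximating sequence would be built, whereas the paper delegates this to the density lemma.
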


\begin{proof}
By \cref{lem.densW} (i), it is enough to prove that the desired identity holds at every $t \in (0,T)$ when $u$ and $p$ belong to the space $\calC^\infty([0,T],H^1_0(D))$.
To achieve this, we first use a change of variables based on the mapping $\varphi_t$:
$$ 
\begin{array}{>{\displaystyle}cc>{\displaystyle}l}
\int_D \sigma_{\Omega(t)}(x) u(t,x) p(t,x) \:\d x &=& \int_{D}  \lvert \det \nabla \varphi_t(x) \lvert \sigma_{\Omega(t)}(\varphi_t(x)) u(t,\varphi_t(x)) p(t,\varphi_t(x)) \:\d x \\[1em]
&=&   \int_{D}  \lvert \det \nabla \varphi_t(x) \lvert \sigma_\Omega(x) \: u(t,\varphi_t(x)) p(t,\varphi_t(x)) \:\d x,
\end{array}
$$
where the second line follows from \cref{eq.snut}.
Taking derivatives with respect to time, we obtain:
\begin{multline*} \frac{\d}{\d t} \left( \int_D \sigma_{\Omega(t)}(x) u(t,x) p(t,x) \:\d x\right) = 
\int_D |\det \nabla \varphi_t(x)| \sigma_{\Omega}(x) \frac{\d u}{\d t}(t,\varphi_t(x)) p(t,\varphi_t(x)) \:\d x \\
+ \int_D  |\det \nabla \varphi_t(x)| \sigma_{\Omega}(x) u(t,\varphi_t(x)) \frac{\d p}{\d t}(t,\varphi_t(x)) \:\d x  \\
+ \int_D |\det \nabla \varphi_t(x)| \tr \left( \nabla \varphi_t(x)^{-1} \nabla \left( \frac{\partial \varphi_t}{\partial t}\right) (x)\right) \sigma_\Omega(x) u(t,\varphi_t(x)) p(t,\varphi_t(x)) \:\d x.
\end{multline*}
Here, we have used the usual formula for the derivative of the determinant mapping.
Another change of variables now yields:
\begin{multline*} \frac{\d}{\d t} \left( \int_D \sigma_{\Omega(t)}(x) u(t,x) p(t,x) \:\d x\right) = 
\int_D \sigma_{\Omega(t)}(x) \frac{\d u}{\d t}(t,x) p(t,x) \:\d x 
+ \int_D  \sigma_{\Omega(t)}(x) u(t,x) \frac{\d p}{\d t}(t,x) \:\d x  \\
+ \int_D  \tr \left( \nabla \varphi_t(\varphi_t^{-1}(x))^{-1} \nabla \left(\frac{\partial \varphi_t}{\partial t}\right)( \varphi_t^{-1}(x))\right) \sigma_{\Omega(t)}(x) u(t,x) p(t,x) \:\d x.
\end{multline*}
The desired identity eventually results from the following elementary calculation and the definition \cref{eq.defvel} of the velocity field $v(t,x)$: 
$$ 
\begin{array}{>{\displaystyle}cc>{\displaystyle}l}
\nabla \left(\frac{\partial \varphi_t}{\partial t}\right)( \varphi_t^{-1}(x))  \nabla \varphi_t(\varphi_t^{-1}(x))^{-1} &=& \nabla \left(\frac{\partial \varphi_t}{\partial t}\right)( \varphi_t^{-1}(x)) \nabla(\varphi_t^{-1}(x)) \\[0.75em]
 &=& \nabla \left( \frac{\partial \varphi_t}{\partial t}(\varphi_t^{-1}(x)) \right)\\[0.75em]
 &=& \nabla v (t,x).
 \end{array}$$
\end{proof}

\begin{corollary}\label{lem.intdudtp}
The following relation holds, for all $u,p \in W$:
\begin{multline*}
 \int_Q \sigma_{\Omega(t)} \frac{\d u}{\d t} p \:\d x \d t =  \int_D \left(\sigma_{\Omega(T)} u(T,x) p(T,x) - \sigma_{\Omega(0)} u(0,x) p(0,x) \right)\:\d x  \\
 - \int_Q \sigma_{\Omega(t)} \frac{\d p}{\d t} u \:\d x \d t  - \int_Q \sigma_{\Omega(t)} (\dv v) u p \:\d x \d t. 
 \end{multline*}
\end{corollary}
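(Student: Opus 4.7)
The strategy is to integrate the distributional identity from \cref{lem.reynolds} over $(0,T)$ and apply the fundamental theorem of calculus. Introducing the scalar function $\phi(t) := \int_D \sigma_{\Omega(t)}(x) u(t,x) p(t,x) \,\d x$, the preceding lemma identifies $\phi'$, in the sense of distributions on $(0,T)$, with the sum of three integrals over $D$. Provided $\phi$ admits a continuous representative on $[0,T]$ and $\phi' \in L^1(0,T)$, one obtains $\phi(T) - \phi(0) = \int_0^T \phi'(t)\,\d t$, which upon rearrangement of the three terms produced by \cref{lem.reynolds} is exactly the claimed identity.

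The $L^1(0,T)$ regularity of $\phi'$ is straightforward: the coefficients $\sigma_{\Omega(t)}$ and $\dv v$ are uniformly bounded in $(t,x)$, the gradients $\nabla u, \nabla p$ lie in $L^2(Q)$, and by definition of $W$ the total derivatives $\sigma_{\Omega(t)} \frac{\d u}{\d t}$ and $\sigma_{\Omega(t)} \frac{\d p}{\d t}$ belong to $L^2(0,T; H^{-1}(D))$. Cauchy--Schwarz in time then controls each of the three contributions by the product $\lvert\lvert u \lvert\lvert_W \lvert\lvert p \lvert\lvert_W$, so in particular each is a finite integral and the pointwise-in-$t$ values inherited from \cref{lem.reynolds} constitute an $L^1(0,T)$ function.

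It remains to justify that $\phi$ has a continuous representative on $[0,T]$ admitting well-defined traces at $t=0$ and $t=T$. Because $\sigma_{\Omega(t)}$ vanishes outside $\Dmag(t)$, the integrand in $\phi(t)$ is supported in $\Dmag(t)$; using the change of variables $x = \varphi_t(y)$ together with $\lvert \det \nabla \varphi_t \lvert \equiv 1$ (\cref{eq.divvzero}) and \cref{eq.snut}, one rewrites $\phi(t) = \sigma_m \int_{\Dmag} u(t,\varphi_t(y)) p(t,\varphi_t(y)) \,\d y$. The embedding property of $W$ recalled just before \cref{eq.Xper}, namely that $t \mapsto u(t,\cdot) \in L^2(\Dmag)$ is continuous, then yields the continuity of $\phi$ on $[0,T]$ together with the existence of the traces $u(0,\cdot), u(T,\cdot), p(0,\cdot), p(T,\cdot) \in L^2(\Dmag)$ that appear in the statement (note that $\varphi_0 = \varphi_T = \Id$ on $\overline{\Drot}$ by \cref{eq.hypvphi2}, so $\Dmag(0) = \Dmag(T) = \Dmag$).

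The main obstacle is really just this regularity bookkeeping: the degeneracy of $\sigma_{\Omega(t)}$ outside $\Dmag(t)$ forces the boundary terms to be interpreted on $\Dmag$ alone, which requires invoking the trace property of $W$ rather than the classical one of $W(0,T; H^1_0(D), L^2(D))$. Alternatively, one could prove the identity first for $u, p \in \calC^\infty([0,T]; H^1_0(D))$ — where all quantities are pointwise meaningful and the calculation reduces to \cref{lem.reynolds} followed by the classical fundamental theorem of calculus — and then extend to $u, p \in W$ by density, using the continuity bounds obtained above.
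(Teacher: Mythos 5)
Your proposal is correct and follows essentially the same route as the paper: rearrange the identity of \cref{lem.reynolds}, observe that $t \mapsto \int_D \sigma_{\Omega(t)} u p \,\d x$ is absolutely continuous, and integrate over $(0,T)$. The extra regularity bookkeeping you supply (the $L^1$ bound on $\phi'$, the reduction of the boundary terms to $\Dmag$ via the trace property of $W$) is left implicit in the paper but is the right justification.
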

\begin{proof}
For a.e. $t \in (0,T)$, \cref{lem.reynolds} states that:
\begin{multline*}
\int_D \sigma_{\Omega(t)}(x) \frac{\d u}{\d t}(t,x) p(t,x) \:\d x =
 \frac{\d}{\d t} \left( \int_D \sigma_{\Omega(t)}(x) u(t,x) p(t,x) \:\d x\right) \\
- \int_D \sigma_{\Omega(t)}(x) u(t,x) \frac{\d p}{\d t}(t,x) \:\d x  
- \int_D \sigma_{\Omega(t)} \dv v (t,x) u(t,x) p(t,x) \:\d x.
\end{multline*}
In particular, the mapping $t \mapsto \int_D \sigma_{\Omega(t)} u(t,x) p(t,x) \:\d x$ is absolutely continuous on $(0,T)$.
The result follows immediately from integration of this identity over the time period $(0,T)$.
\end{proof}

The variational problem \cref{eq.pbuOm} implicitly encompasses jump conditions for $u_\Omega(t,\cdot)$ at the interface $\Gamma(t)$
where the conductivity and the reluctivity are discontinuous; these are the subject of the next lemma:

\begin{lemma}\label{lem.jumpu}
Let the right-hand side $f$ belong to the space $L^2(0,T,L^2(D))$; the corresponding solution $u_\Omega(t,\cdot)$ to \cref{eq.pbuOm} is continuous across $\Gamma(t)$: 
\begin{equation}\label{eq.jumpu1}
\forall t \in (0,T), \:\: x \in \Gamma, \quad u_\Omega^-(t,\varphi_t(x)) = u_\Omega^+(t,\varphi_t(x)), 
\end{equation}
and so are its normal flux and tangential gradient:
\begin{equation}\label{eq.jumpu2}
\forall  t \in (0,T), \:\: x \in \Gamma, \quad \Big( \nu_\Omega \nabla u_\Omega(t,\varphi_t(x))\Big)^- \cdot n_{\Omega(t)}(\varphi_t(x))  = \Big( \nu_\Omega \nabla u_\Omega(t,\varphi_t(x))\Big)^+ \cdot n_{\Omega(t)}(\varphi_t(x)),
\end{equation}
\begin{equation}\label{eq.jumpu3}
\forall t \in (0,T), \:\: x \in \Gamma, \quad \nabla_{\Gamma(t)} u_\Omega^-(t,\varphi_t(x))  =  \nabla_{\Gamma(t)} u_\Omega^+(t,\varphi_t(x)).
\end{equation}
Finally, it holds:
\begin{equation}\label{eq.jumpu4}
\forall t \in (0,T), \:\: x \in \Gamma, \quad \frac{\d u_\Omega^-}{\d t}(t,\varphi_t(x)) = \frac{\d u_\Omega^+}{\d t}(t,\varphi_t(x)). 
\end{equation}
\end{lemma}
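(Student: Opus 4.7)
The plan is to establish the four transmission and continuity statements \cref{eq.jumpu1,eq.jumpu2,eq.jumpu3,eq.jumpu4} in order, using respectively the spatial $H^1$ regularity of $u_\Omega(t,\cdot)$, the variational identity \cref{eq.pbuOmae} combined with $f \in L^2(0,T;L^2(D))$, and the Lagrangian interpretation of $\frac{\d u_\Omega}{\d t}$ recalled in \cref{eq.totder}.

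First, I would handle \cref{eq.jumpu1} and \cref{eq.jumpu3} together, since they are immediate consequences of a standard trace argument. For a.e. $t \in (0,T)$ the function $u_\Omega(t,\cdot)$ belongs to $H^1_0(D)$, and the moving interface $\Gamma(t) = \varphi_t(\Gamma)$ is a smooth closed curve strictly inside $D$; hence the trace of $u_\Omega(t,\cdot)$ on $\Gamma(t)$ from inside and from outside $\Omega(t)$ coincide in $H^{1/2}(\Gamma(t))$, which yields \cref{eq.jumpu1}. The one-sided tangential gradients $\nabla_{\Gamma(t)} u_\Omega^\pm$ are then intrinsic differential operators acting on this common trace, whence \cref{eq.jumpu3}.

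I would next prove \cref{eq.jumpu2}, the classical transmission condition for the normal flux across the interface where $\nu_{\Omega(t)}$ is discontinuous. The strategy is to pick a test function $w \in \calC^\infty_c(D)$ supported in a small neighborhood $U$ of a regular point of $\Gamma(t)$, plug it into \cref{eq.pbuOmae} at a time $t$ where the identity holds, and split the spatial integral between $U \cap \Omega(t)$ and $U \setminus \overline{\Omega(t)}$. Integrating the diffusion term by parts in each subdomain, the volume contributions collapse against the remaining terms thanks to the strong form of \cref{eq.uOm} (which is valid phase-by-phase because the right-hand side now lies in $L^2$), leaving a boundary integral $\int_{\Gamma(t)\cap U} \bigl[\nu_{\Omega(t)} \nabla u_\Omega \cdot n_{\Omega(t)}\bigr] w \,\d s = 0$; arbitrariness of $w$ forces the jump to vanish.

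Finally, \cref{eq.jumpu4} would follow from the kinematic definition \cref{eq.totder}: for $y \in \Gamma$, $\frac{\d u_\Omega^\pm}{\d t}(t,\varphi_t(y))$ is by construction the time derivative of the partial mapping $t \mapsto u_\Omega^\pm(t,\varphi_t(y))$. By \cref{eq.jumpu1}, already established, the two such mappings (one per sign) coincide pointwise in $t$, so their time derivatives agree as well, which proves \cref{eq.jumpu4}. The main obstacle I anticipate is making the integration by parts step in the proof of \cref{eq.jumpu2} fully rigorous, since a priori the one-sided normal fluxes of $\nabla u_\Omega$ on $\Gamma(t)$ exist only in a distributional sense; this would require an elliptic regularity argument applied phase-by-phase to the spatial operator $-\dv(\nu_{\Omega(t)} \nabla \cdot)$, with the convective and source terms absorbed into an $L^2(D)$ right-hand side, to make sense of the traces $(\nu_{\Omega(t)}\nabla u_\Omega)^\pm \cdot n_{\Omega(t)}$ in $H^{-1/2}(\Gamma(t))$ (or better). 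The same regularity up to $\Gamma(t)$ is also what allows one to read \cref{eq.jumpu4} as a genuine pointwise identity on the moving interface rather than merely a distributional one.
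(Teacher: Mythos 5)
Your proposal is correct and follows essentially the same route as the paper: spatial $H^1$ regularity for \cref{eq.jumpu1} (and hence \cref{eq.jumpu3} by taking tangential gradients of the common trace), a phase-by-phase integration by parts against test functions in \cref{eq.pbuOmae} to isolate the interface term for \cref{eq.jumpu2}, and differentiation in time of \cref{eq.jumpu1} along the flow for \cref{eq.jumpu4}. Your closing remark about needing phase-wise elliptic regularity to give pointwise (rather than merely $H^{-1/2}$) meaning to the one-sided normal fluxes is a fair observation that the paper's proof also leaves implicit.
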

\begin{proof}
Since $u_\Omega \in L^2(0,T;H^1_0(D))$, the function $u_\Omega(t,\cdot)$ belongs to $H^1_0(D)$ for a.e. $t \in (0,T)$, and so \cref{eq.jumpu1} holds true. 

Let now $t \in (0,T)$ be given; by considering smooth test functions $w \in \calC^\infty_c(D)$  with compact support inside $\Omega(t_0)$ or $D \setminus \overline{\Omega(t_0)}$ in \cref{eq.pbuOmae}, then integrating by parts, we see that:
\begin{equation}\label{eq.vanvolPDEu}
 \sigma_{\Omega(t)} \frac{\d u_\Omega}{\d t}(t,x) - \dv\left( \nu_{\Omega(t)}(x,\lvert\nabla u_\Omega(t,x) \lvert)\nabla u_\Omega(t,x) \right) = f(t,x)
 \end{equation}
in the sense of distributions in $\Omega(t)$ or $D \setminus \overline{\Omega(t)}$. 
Repeating the calculation with arbitrary smooth test functions $w \in \calC^\infty_c(\overline D)$ and using \cref{eq.vanvolPDEu}, we now obtain: 
 $$\int_{\Gamma(t)}\left( \nu_{\Omega(t)} \frac{\partial  u_\Omega}{\partial n} \right)^- w \:\d s -\int_{\Gamma(t)} \left(\nu_{\Omega(t)} \frac{\partial  u_\Omega}{\partial n} \right)^+ w \:\d s  =0 ,$$
and \cref{eq.jumpu2} follows immediately.

Eventually, \cref{eq.jumpu3} is obtained by taking the tangential gradient in \cref{eq.jumpu1}, and taking time derivatives in \cref{eq.jumpu1} yields the last jump relation \cref{eq.jumpu4}. 
\end{proof}
\begin{remark}
In the following, we denote by $\nabla_{\Gamma(t)} u_\Omega(t,\varphi_t(x))$ and $\nu_{\Omega(t)} \nabla u_\Omega(t,\varphi_t(x))\cdot n_{\Omega(t)}(\varphi_t(x))$ the common values of the one-sided limits of the tangential gradient and normal flux of $u_\Omega(t,\cdot)$ through $\Gamma(t)$.
\end{remark}
%

%%%%%%%%%%%%%%%%%%%%%%%%%%%%%%%
\subsection{Calculation of the shape derivative of a physical objective function $J(\Omega)$ in the linear magneto-quasi-static setting}\label{sec.magsd}
%%%%%%%%%%%%%%%%%%%%%%%%%%%%%%%

\noindent In this section, we detail the calculation of the shape derivative of the physical objective function $J(\Omega)$ in \cref{eq.defJOmpde} involving the magneto-quasi-static potential $u_\Omega$,
with a particular emphasis on the arguments which are quite specific to the present physical context. 
To keep technicality at a minimum, our analysis is conducted in the linear case where the reluctivity $\hat\nu$ of the ferromagnetic material is a constant function $\hat \nu \equiv \nu_f$, see \cref{rem.linear}.
The needed adaptations to the general case of a non constant function $\hat\nu$ are elementary, albeit tedious; they are discussed in the next \cref{sec.extder}.
 Throughout the remainder of this section, we rely on the following shorthand in notation: we denote by $\Omega_1 := \Omega$ the optimized phase of the rotor filled by ferromagnetic material, 
and by $\Omega_2 := D \setminus \overline{\Omega}$ its complement in $D$. 
Accordingly, we denote by $\nu_1$, $\sigma_1$ the reluctivity and conductivity inside $\Omega_1$, and by $\nu_2$, $\sigma_2$ the piecewise constant functions matching the repartition \cref{eq_sigmaNu}. 

The evolution problem \cref{eq.uOm} then rewrites: 
\begin{equation}\label{eq.uOmSimpl}
\left\{
\begin{array}{cl}
\sigma_{\Omega(t)} \frac{\d u_\Omega}{\d t}  - \dv\left( \nu_{\Omega(t)} \nabla u_\Omega\right) = f & \text{in } Q, \\
u_\Omega(t,x) =0 & \text{for } t \in (0,T), \: x \in \partial D, \\ 
u_\Omega(0, x) = u_\Omega(T, x)& \text{for } x \in D_{\text{mag}},
\end{array}
\right.
\end{equation}
where 
$$\sigma_{\Omega(t)}(x) = \left\{
\begin{array}{cl}
\sigma_1 & \text{if } x \in \Omega(t), \\ 
\sigma_2 & \text{otherwise},
\end{array}
\right.
\text{ and } 
\nu_{\Omega(t)}(x) = \left\{
\begin{array}{cl}
\nu_1 & \text{if } x \in \Omega(t), \\ 
\nu_2 & \text{otherwise}.
\end{array}
\right.
$$ \par\medskip

\begin{proposition}\label{prop.sdJOmmag}
Let the source term $f: Q \to \R$ be smooth; the functional $J(\Omega)$ in \cref{eq.defJOmpde} is shape differentiable at any bounded and Lipschitz shape $\Omega \subset \Drot$, and its shape derivative reads, under volume form:
\begin{multline}\label{eq.Jpvol}
\forall \theta \in \Tad, \quad J^\prime(\Omega)(\theta)   =  \int_Q  m^\prime(0)(\theta) j(u_\Omega) \:\d x \d t  \\
+  \int_Q \sigma_{\Omega(t)}  \left(  m^\prime(0)(\theta) \frac{\d u_{\Omega}}{\d t}  - \Big[ F_{xx}^{\prime}(0)(\theta) \Big]v \cdot \nabla u_\Omega  + v_1(\theta) \cdot \nabla u_\Omega +  b^\prime(0)(\theta) \cdot \nabla u_\Omega \right)   \: p_\Omega \:\d x \d t  
\\
+ \int_Q \nu_{\Omega(t)} \Big[A^\prime(0)(\theta) \Big]\nabla u_\Omega \cdot \nabla p_\Omega \:\d x \d t
-  \int_Q  \Big( m^\prime(0)(\theta) f + f_1(\theta)\Big) p_\Omega \:\d x \d t,
\end{multline}
where the quantities $A^\prime(0)(\theta)$, $m^\prime(0)(\theta)$, $F_{xx}^\prime(0)(\theta)$ and $b^\prime(0)(\theta)$ are respectively defined by \cref{eq.Athetap,eq.expmtheta,eq.Fxxt,eq.bthetap},
and $f_1(\theta)$ and $v_1(\theta)$ are the derivatives of the mappings $\theta \mapsto f \circ \Theta$ and $\theta \mapsto v \circ \Theta$, see \cref{eq.fthetap,eq.vthetap}. 

The adjoint state $p_\Omega$ is the unique solution in $\Wper$ to the evolution problem: 
\begin{equation} \label{eq.pOm}
\left\{
\begin{array}{cl}
- \sigma_{\Omega(t)} \frac{\d p_\Omega}{\d t}  - \dv\left( \nu_{\Omega(t)} \nabla p_\Omega\right) - \sigma_{\Omega(t)} (\dv v) p_\Omega = -j^\prime(u_\Omega)& \text{in } Q, \\
p_\Omega(t,x) =0 & \text{for } t \in (0,T), \: x \in \partial D, \\ 
p_\Omega(0,x )  = p_\Omega(T,x ) & \text{for } x \in D.
\end{array}
\right.
\end{equation}
It is equivalently characterized as the solution to the following variational problem:
\begin{multline}\label{eq.varfp}
\text{Search for } p_\Omega \in \Wper \text{ s.t. } \forall w \in L^2(0,T;H^1_0(D)),  \\
 \int_Q  \Big( - \sigma_{\Omega(t)} \frac{\d  p_\Omega}{\d t } w + \nu_{\Omega(t)}\nabla p_\Omega \cdot \nabla w - \sigma_{\Omega(t)} (\dv v) p_\Omega w \Big)\:\d x \d t  = -\int_Q j^\prime(u_\Omega) w \:\d x \d t.
\end{multline}

Assuming $\Omega$ to be smooth, the shape derivative $J^\prime(\Omega)(\theta)$ admits the surface form: 
\begin{multline}\label{eq.Jpsurf}
\forall \theta \in \Tad, \quad J^\prime(\Omega)(\theta) = \int_{\Gamma} v_\Omega \: (\theta\cdot n) \:\d s, \text{ where } \\
v_\Omega(x) := -(\sigma_2-\sigma_1) \int_0^T   \lvert \det\nabla \varphi_t(x)\lvert \frac{\d u_{\Omega}}{\d t}(t,\varphi_t(x))  p_\Omega(t,\varphi_t(x)) \: \d t  \\
  - (\nu_2-\nu_1) \int_0^T \lvert\det \nabla \varphi_t(x)\lvert \nabla_{\Gamma(t)} u_\Omega(t,\varphi_t(x)) \cdot \nabla_{\Gamma(t)} p_\Omega (t,\varphi_t(x))  \: \d t \\
   + \left(\frac{1}{\nu_2} - \frac{1}{\nu_1} \right) \int_0^T  \lvert \det \nabla \varphi_t(x)\lvert \Big( \nu_{\Omega(t)} \nabla u_\Omega(t,\varphi_t(x)) \cdot n_{\Omega(t)} (\varphi_t(x))\Big)  \Big( \nu_{\Omega(t)} \nabla p_\Omega(t,\varphi_t(x)) \cdot n_{\Omega(t)}(\varphi_t(x)) \Big) \:  \d t.
\end{multline}
\end{proposition}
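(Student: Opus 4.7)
The plan is to apply the Lagrangian method of shape differentiation: pull back the state problem to the fixed reference cylinder $Q_\Omega$ via the space-time deformation $\Theta$ of \cref{eq.stdef}, then eliminate the derivative of the state through the adjoint variable. The first observation is that, by the very construction of $\Theta$ together with \cref{eq.snut}, the pulled-back coefficients $\sigma_{\Omega_\theta(t)}\circ\Theta$ and $\nu_{\Omega_\theta(t)}\circ\Theta$ coincide with $\sigma_{\Omega(t)}$ and $\nu_{\Omega(t)}$ on $Q$, respectively. Using the chain-rule identities \cref{eq.chgvNtNx} for $\frac{\partial}{\partial t}$ and $\nabla$ under $\Theta$, together with the change of volume element $m(\theta)\,\d x\,\d t$, the variational problem \cref{eq.pbuOm} for $u_{\Omega_\theta}$ is then equivalent to a variational problem on the fixed cylinder $Q$ for the transported state $\tilde u_\theta := u_{\Omega_\theta}\circ\Theta \in \Wper$, with coefficients depending smoothly on $\theta$ through $A(\theta)$, $b(\theta)$, $m(\theta)$, $v\circ\Theta$ and $f\circ\Theta$ from \cref{sec.techprel}. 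Since this transported problem coincides with \cref{eq.uOmSimpl} at $\theta=0$, an application of the implicit function theorem on $\Wper$, whose invertibility hypothesis is precisely the well-posedness statement of \cref{th.wellposed} for the linearized equation, delivers the Fr\'echet differentiability of $\theta\mapsto \tilde u_\theta$ at $0$ and characterizes its derivative $\dot u \in \Wper$ by a linear variational problem.

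After the change of variables induced by $\Theta$, the objective reads $J(\Omega_\theta) = \int_Q m(\theta)\, j(\tilde u_\theta)\,\d x\,\d t$, and differentiating at $\theta=0$ yields
\[
J'(\Omega)(\theta) = \int_Q m'(0)(\theta)\, j(u_\Omega)\,\d x\,\d t + \int_Q j'(u_\Omega)\, \dot u\,\d x\,\d t.
\]
To eliminate $\dot u$, I would test the linearized state equation against the adjoint $p_\Omega \in \Wper$ of \cref{eq.pOm} and, conversely, test \cref{eq.varfp} against $\dot u$; subtracting the two identities and applying the time integration by parts of \cref{lem.intdudtp} produces the volume formula \cref{eq.Jpvol}. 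The crucial point is that the boundary terms at $t=0$ and $t=T$ cancel thanks to the periodicity encoded in $\Wper$ together with $\varphi_0 = \varphi_T = \Id$. Well-posedness of \cref{eq.pOm} itself follows from \cref{th.wellposed} after the time reversal $t \mapsto T - t$, which turns the backward adjoint equation into a forward equation of the same structure as \cref{eq.uOmSimpl}.

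The passage from the volume formula \cref{eq.Jpvol} to the surface formula \cref{eq.Jpsurf} is carried out by integrating by parts in $x$ every occurrence of $\nabla\theta$ in \cref{eq.Jpvol} -- those carried by $m'(0)(\theta)$, $F_{xx}'(0)(\theta)$ and $A'(0)(\theta)$ -- after splitting the space integrals between $\Omega(t)$ and $D\setminus\overline{\Omega(t)}$. The interior contributions cancel thanks to the pointwise equations \cref{eq.vanvolPDEu} satisfied by $u_\Omega$ and by $p_\Omega$ on each side of $\Gamma(t)$, and the boundary contributions on $\Gamma(t)$ combine with the jump relations \cref{eq.jumpu1,eq.jumpu2,eq.jumpu3,eq.jumpu4} of \cref{lem.jumpu}, applied to both $u_\Omega$ and $p_\Omega$. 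Decomposing $\nabla u_\Omega$ and $\nabla p_\Omega$ on $\Gamma(t)$ into their tangential and normal components, and exploiting that $\nabla_{\Gamma(t)} u_\Omega$ and $\nu_{\Omega(t)}\nabla u_\Omega\cdot n_{\Omega(t)}$ are continuous whereas the complementary components jump, one recovers the three prefactors $(\sigma_2-\sigma_1)$, $(\nu_2-\nu_1)$ and $\left(\frac{1}{\nu_2}-\frac{1}{\nu_1}\right)$. A final change of variable $y = \varphi_t(x)$ brings the integrals from $\Gamma(t)$ back to the reference interface $\Gamma$, giving \cref{eq.Jpsurf}. I expect the main technical obstacle to lie precisely in this last step: the bookkeeping of the volume-to-surface conversion is delicate, as already observed for the much simpler academic functional of \cref{lem.sdacad}, and the tangential/normal decomposition must be carried out carefully -- in particular to produce the $\left(\frac{1}{\nu_2}-\frac{1}{\nu_1}\right)$ factor, whose presence reflects the jump of $1/\nu$ rather than of $\nu$ in the reconstruction of the normal component from the continuous normal flux.
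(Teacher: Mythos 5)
Your proposal is correct and follows essentially the same route as the paper: pull-back of the state problem by the space--time map $\Theta$ and the implicit function theorem to obtain the Lagrangian (material) derivative, elimination of that derivative via the adjoint of \cref{eq.varfp} combined with the transport/integration-by-parts identity of \cref{lem.intdudtp} and the periodicity of $\Wper$, and finally the volume-to-surface conversion through integration by parts, the jump relations of \cref{lem.jumpu} and \cref{rem.jumpp}, and the tangential/normal decomposition (including the correct explanation of the $\tfrac{1}{\nu_2}-\tfrac{1}{\nu_1}$ factor). The only cosmetic differences are the order of operations in the last step — the paper first changes variables by $\varphi_t$ back to the reference configuration before integrating by parts, which is slightly cleaner since $\theta$ and $\nabla\theta$ are evaluated at $\varphi_t^{-1}(x)$ — and your time-reversal justification for the well-posedness of the adjoint, which the paper replaces by an appeal to the arguments of \cref{th.wellposed}.
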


\begin{remark}\label{rem.jumpp}
\noindent \begin{itemize}
\item The well-posedness of the (linear) adjoint problem \cref{eq.pOm} follows from similar, although simpler arguments to those involved in the proof of \cref{th.wellposed} about the (non linear) state problem \cref{eq.uOm}.
\item Like $u_\Omega(t,\cdot)$, the adjoint state $p_\Omega(t,\cdot)$ satisfies jump relations at the interface $\Gamma(t)$; a similar analysis to that in the proof of \cref{lem.jumpu} indeed yields:
\begin{equation*}
\forall t \in (0,T), \:\: x \in \Gamma, \quad p_\Omega^-(t,\varphi_t(x)) = p_\Omega^+(t,\varphi_t(x)), 
\end{equation*}
and
\begin{equation*}
\forall t \in (0,T), \:\: x \in \Gamma, \quad (\nu_\Omega \nabla p_\Omega(t,\varphi_t(x)))^- \cdot n_{\Omega(t)}(\varphi_t(x))  = (\nu_\Omega \nabla p_\Omega(t,\varphi_t(x)))^+ \cdot n_{\Omega(t)}(\varphi_t(x)).
\end{equation*}
\end{itemize}
\end{remark}

\begin{remark}\label{eq.usedvM}
\noindent
\begin{itemize}
\item The smoothness assumption about the source $f$ in the statement of \cref{prop.sdJOmmag} can be weakened in various ways. For instance, the statement holds mutatis mutandis if $f$ only belongs to $L^2(0,T;H^{-1}(D))$, but for each time $t \in (0,T)$,
$f(t,\cdot)$ has support in the region $\Dstat$ where deformations $\theta \in \Tad$ vanish.
\item As we have evoked in \cref{rem.linear}, in realistic applications,
the right-hand side $f$ contains a contribution of the form $\dv(L \mathds{1}_{\Dmag(t)})$ where the smooth vector field $L(t,x)$ stands for the $90^\circ$ clockwise rotate $M^\perp$ of the magnetization $M$.
In such case, the last integral in the right-hand side of \cref{eq.Jpvol} should be supplemented with: 
\begin{equation} \label{eq.suppMagn}
-\int_{Q_{\Dmag}} \left( m^\prime(0)(\theta) L  + L_1(\theta) - \Big[F_{xx}^\prime(0)(\theta) \Big] L \right) \cdot \nabla p_\Omega \:\d x \d t,
\end{equation}
where $L_1(\theta)$ is the first-order term in the asymptotic expansion of the vector field $L \circ \Theta$ at $\theta=0$, see \cref{eq.vthetap}. 
\end{itemize}
\end{remark}

\begin{proof}[Proof of \cref{prop.sdJOmmag}]
However technically tedious, the proof follows a quite classical trail, as
exposed for instance in \cite{allaire2020survey,murat1976controle,henrot2018shape}. We proceed in five steps. \par\medskip

\noindent \textit{Step 1: We prove that the mapping $\Omega \mapsto u_\Omega$ has a ``Lagrangian derivative''.}\par\medskip

\noindent More precisely, we prove that the transported potential $\theta \mapsto \overline{u_\Omega}(\theta):= u_{\Omega_\theta} \circ \Theta$ is Fr\'echet differentiable from a neighborhood of $0$ in $\Tad$ into $\Wper$. 
This starts from the variational problem satisfied by the magneto-quasi-static field $u_{\Omega_\theta} \in \Wper$:
\begin{equation}
\forall w \in L^2(0,T;H^1_0(D)), \quad \int_Q \sigma_{\varphi_t(\Omega_\theta)}(x) \frac{\d u_{\Omega_\theta}}{\d t} w \:\d x \d t + \int_Q \nu_{\varphi_t(\Omega_\theta)}(x) \nabla u_{\Omega_\theta} \cdot \nabla w \:\d x \d t = \int_Q f w \:\d x \d t.
\end{equation}
A change of variables via $\Theta$ in the above integrals yields:
 \begin{multline}\label{eq.varfuOmchgvar}
\int_Q (\sigma_{\varphi_t(\Omega_\theta)} \circ \Theta_x)  m(\theta) \left(\left( \frac{\partial u_{\Omega_\theta}}{\partial t}\right) \circ \Theta + (v \circ \Theta) \cdot (\nabla u_{\Omega_\theta} \circ \Theta)  \right) \:( w \circ \Theta) \:\d x \d t \\
 + \int_Q (\nu_{\varphi_t(\Omega_\theta)} \circ \Theta_x)  m(\theta) \Big( \nabla u_{\Omega_\theta} \circ \Theta \Big) \cdot \Big( \nabla w  \circ \Theta \Big) \:\d x \d t = \int_Q  m(\theta) (f \circ \Theta) (w\circ \Theta) \:\d x \d t,
 \end{multline}
 where we recall the expression \cref{eq.defmtheta} of $m(\theta)$. 
 Now, from the definitions \cref{eq.snut} of $\sigma_\Omega$ and $\nu_\Omega$, it holds:
 $$ \sigma_{\varphi_t(\Omega_\theta)} \circ \Theta_x = \sigma_{\Omega(t)}, \text{ and } \nu_{\varphi_t(\Omega_\theta)} \circ \Theta_x = \nu_{\Omega(t)},$$
so that, by using the relations \cref{eq.chgvNtNx} and taking test functions of the form  $w \circ \Theta^{-1}$, $w\in L^2(0,T;H^1_0(D))$, 
we see that the transported function $\overline{u_\Omega}(\theta)$ satisfies the variational problem:
 \begin{multline}\label{eq.varfubar}
\forall w \in L^2(0,T;H^1_0(D)), \quad \int_Q \sigma_{\Omega(t)} m(\theta) \left( \frac{\partial \overline{u_{\Omega}}(\theta)}{\partial t} + b( \theta) \cdot \nabla \overline{u_{\Omega}}(\theta) + \Big[F_{xx}^{-1}(\theta)\Big] (v \circ \Theta) \cdot \nabla \overline{u_\Omega}(\theta)\right)   \: w \:\d x \d t \\
 + \int_Q \nu_{\Omega(t)}  \Big[A(\theta) \Big]  \nabla \overline{u_{\Omega}}(\theta) \cdot   \nabla w  \:\d x \d t = \int_Q  m(\theta) (f \circ \Theta)  w\:\d x \d t,
\end{multline}
where $A(\theta)$ and $b(\theta)$ are respectively defined by \cref{eq.defAtheta} and \cref{eq.FthetamT}. 

This problem can be reformulated as a more abstract equation for $\overline{u_\Omega}(\theta)$:
$$ \text{Search for } u \in \Wper \text{ s.t. } \calF(\theta,u) = 0,$$
where we have defined the mapping $\calF : \Tad \times \Wper \mapsto L^2(0,T;H^{-1}(D))$ by:
\begin{equation}\label{eq.defcalF}
 \calF(\theta,u) = \left[
\begin{array}{>{\displaystyle}r}
w\mapsto  \int_Q \sigma_{\Omega(t)} m(\theta) \left( \frac{\partial u}{\partial t} + b( \theta) \cdot \nabla u + \Big[F_{xx}^{-1}(\theta)\Big] (v \circ \Theta) \cdot \nabla u\right)   \: w \:\d x \d t \\
 + \int_Q \nu_{\Omega(t)} \Big[A(\theta) \Big]  \nabla u \cdot   \nabla w  \:\d x \d t
  - \int_Q  m(\theta) (f \circ \Theta)  w\:\d x \d t
 \end{array}
\right].
\end{equation}
An elementary verification shows that this equation fulfills the assumptions of  the implicit function theorem at the point $(0,u_\Omega)$, see e.g. \cite{lang2012fundamentals} Chap. I, Th. 5.9. 
Hence, the mapping $\theta \mapsto \overline{u_\Omega}(\theta)$ is Fr\'echet differentiable at $\theta = 0$. Its derivative, denoted by $\mathring{u_\Omega}(\theta)$, is the so-called Lagrangian derivative of $u_\Omega$.
\par\medskip

\noindent \textit{Step 2: We characterize the Lagrangian derivative $\mathring{u_\Omega}(\theta)\in \Wper$.}\par\medskip

\noindent Relying on the information that the mapping $\theta \mapsto \overline{u_\Omega}(\theta)$ is differentiable at $\theta=0$, taking derivatives in \cref{eq.varfubar} shows that
the function $ \mathring{u_{\Omega}}(\theta) \in \Wper$ satisfies, for all $ w \in L^2(0,T;H^1_0(D))$:
 \begin{multline}\label{eq.varderlag}
 \int_Q \sigma_{\Omega(t)}  \frac{\d \mathring{u_{\Omega}}(\theta)}{\d t}    \: w \:\d x \d t 
 + \int_Q \nu_{\Omega(t)}  \nabla \mathring{u_{\Omega}}(\theta) \cdot   \nabla w  \:\d x \d t   = 
  -  \int_Q \sigma_{\Omega(t)}  m^\prime(0)(\theta) \frac{\d u_{\Omega}}{\d t}    \: w \:\d x \d t  \\
 +\int_Q \sigma_{\Omega(t)} \Big[F_{xx}^{\prime}(0)(\theta)\Big] v \cdot \nabla u_\Omega  w\:\d x \d t - \int_Q \sigma_{\Omega(t)} v_1(\theta) \cdot \nabla u_\Omega w\:\d x \d t 
 -  \int_Q \sigma_{\Omega(t)}  b^\prime(0)(\theta) \cdot \nabla u_\Omega  w  \:\d x \d t  \\
 - \int_Q \nu_{\Omega(t)}\Big[A^\prime(0)(\theta)\Big] \nabla u_\Omega \cdot \nabla w \:\d x \d t
+  \int_Q  \Big( m^\prime(0)(\theta) f + f_1(\theta)\Big) w \:\d x \d t.
\end{multline}
This is a well-posed evolution problem for $ \mathring{u_{\Omega}}(\theta) \in \Wper$.\par\medskip

\noindent \textit{Step 3: We calculate the derivative of the functional $J(\Omega)$ in \cref{eq.defJOmpde}.}\par\medskip

\noindent A change of variables based on the mapping $\Theta$ in the definition of $J(\Omega_\theta)$ yields the following expression:
$$ J(\Omega_\theta) = \int_Q m(\theta) j(\overline{u_{\Omega}}(\theta)) \:\d x \d t,$$
which naturally brings into play the transported function $\overline{u_\Omega}(\theta)$.
As we have proved the differentiability of the latter with respect to the deformation $\theta$, we now obtain, by taking derivatives in the above expression:
\begin{equation}\label{eq.Jpudot}
 J^\prime(\Omega)(\theta)= \int_Q \Big( m^\prime(0)(\theta) j(u_\Omega) + j^\prime(u_\Omega) \mathring{u_\Omega}(\theta) \Big)\:\d x \d t.  
 \end{equation}
 \par\medskip
 
 \noindent \textit{Step 4: We apply the adjoint method to derive the volume form of $J^\prime(\Omega)(\theta)$.}\par\medskip
 
\noindent The expression \cref{eq.Jpudot} is not suitable for practical calculations, as it depends in a complicated way on $\theta$, via the Lagrangian derivative $\mathring{u_\Omega}(\theta)$, 
which is only known implicitly, as the solution to \cref{eq.varderlag}. In particular, \cref{eq.Jpudot} does not allow for the simple identification of a descent direction for $J(\Omega)$.

To overcome this classical issue, we rely on the adjoint method from optimal control theory, see e.g. \cite{lions1971optimal} or \cite{plessix2006review} for an intuitive presentation. 
We introduce the adjoint state $p_\Omega \in \Wper$ as the solution to the variational problem \cref{eq.varfp}, and take $w = \mathring{u_\Omega}(\theta)$ as test function in there to obtain: 
$$-\int_Q j^\prime(u_\Omega) \mathring{u_\Omega}(\theta) \:\d x\d t = \int_Q \left( -\sigma_{\Omega(t)} \frac{\d p_\Omega}{\d t} \mathring{u_\Omega}(\theta) + \nu_{\Omega(t)} \nabla p_\Omega \cdot \nabla \mathring{u_\Omega}(\theta) - \sigma_{\Omega(t)} (\dv v) p_\Omega \mathring{u_\Omega}(\theta) \right)\:\d x \d t.$$ 
Using \cref{lem.intdudtp} and the fact that $\mathring{u_\Omega}(\theta) \in \Wper$ together with $\sigma_{\Omega(T)} = \sigma_{\Omega(0)}$ due to \cref{eq.hypvphi2,eq.hypvphi3}, it follows:
$$\int_Q \Big(\sigma_{\Omega(t)} \frac{\d \mathring{u_\Omega}(\theta)}{\d t } p_\Omega + \nu_{\Omega(t)}\nabla p_\Omega \cdot \nabla \mathring{u_\Omega}(\theta) \Big)\:\d x \d t  = -\int_Q j^\prime(u_\Omega) \mathring{u_\Omega}(\theta) \:\d x \d t.$$
Now introducing the variational characterization \cref{eq.varderlag} of $\mathring{u_\Omega}(\theta)$, we obtain: 
\begin{equation}\label{eq.refjuringu}
\begin{array}{>{\displaystyle}cc>{\displaystyle}l}
\int_Q j^\prime(u_\Omega)  \mathring{u_\Omega}(\theta) \:\d x \d t &=&    \int_Q \sigma_{\Omega(t)}  m^\prime(0)(\theta) \frac{\d u_{\Omega}}{\d t}    \: p_\Omega \:\d x \d t  
 -\int_Q \sigma_{\Omega(t)} \Big[ F_{xx}^{\prime}(0)(\theta) \Big] v \cdot \nabla u_\Omega \: p_\Omega \:\d x \d t \\[1em]
&& + \int_Q \sigma_{\Omega(t)} v_1(\theta) \cdot \nabla u_\Omega \: p_\Omega \:\d x \d t 
+  \int_Q \sigma_{\Omega(t)}  b^\prime(0)(\theta) \cdot \nabla u_\Omega \:  p_\Omega  \:\d x \d t  \\[1em]
&& + \int_Q \nu_{\Omega(t)}\Big[A^\prime(0)(\theta)\Big] \nabla u_\Omega \cdot \nabla p_\Omega \:\d x \d t
-  \int_Q  \Big( m^\prime(0)(\theta) f + f_1(\theta)\Big) p_\Omega \:\d x \d t.
\end{array}
\end{equation}
Injecting this expression into \cref{eq.Jpudot}, we obtain the volume form \cref{eq.Jpvol} of the shape derivative of $J(\Omega)$.\par\medskip

 \noindent \textit{Step 5: We infer the surface form \cref{eq.Jpsurf} of $J^\prime(\Omega)(\theta)$ from the volume form \cref{eq.Jpvol}.}\par\medskip
 
\noindent Let us write \cref{eq.Jpvol} under the form
$$J^\prime(\Omega)(\theta)  = \sum\limits_{i=1}^4 I_i(\theta), $$
where $I_i(\theta)$, $i=1,\ldots,4$ are the four integral quantities at the right-hand side of \cref{eq.Jpvol}. 
Our derivation of the surface form of $J^\prime(\Omega)(\theta)$ follows the methodology exposed in \cite{allaire2020survey}, which we have already exemplified in the proof of \cref{lem.sdacad}: 
we integrate by parts all the terms featured in the $I_i(\theta)$
which involve derivatives of $\theta$, and among the resulting terms, we only retain those featuring boundary integrals on $\Gamma$ of the normal component $\theta\cdot n$, 
since the other contributions are expected to cancel according to the prediction of the structure theorem for shape derivatives.

Let us treat the first integral $I_1(\theta)$;  injecting the expression \cref{eq.expmtheta} of $m^\prime(0)(\theta)$, we see that:
$$ I_1(\theta) =  \int_Q  (\dv \theta)(\varphi_t^{-1}(x)) j(u_\Omega(t,x)) \:\d x \d t +r(\theta).$$
Here and throughout the proof, $r(\theta)$ is a remainder that may change from one line to the next, of the form
$$ r(\theta) = \int_D s_\Omega \cdot \theta \:\d x + \int_\Gamma t_\Omega \cdot \theta_\Gamma \:\d s,$$
for some vector fields $s_\Omega: D \to \R^d$, $t_\Omega: \Gamma \to \R^d$ 
that depend on $u_\Omega$, $p_\Omega$ and the motion $\varphi_t$. 
Decomposing the integral in the above right-hand side as a nested integral over $(0,T)$ and $D$, 
and using a change of variables by $\varphi_t$ in the innermost integral, we obtain:
$$ \begin{array}{>{\displaystyle}cc>{\displaystyle}l}
I_1(\theta) &=& \int_0^T \int_{D} |\det \nabla\varphi_t (x)| \dv \theta(x)  j(u_\Omega(t,\varphi_t(x))) \:\d x \d t +r(\theta)\\
&=& \int_0^T \int_{\partial D} |\det \nabla\varphi_t(x) |  j(u_\Omega(t,\varphi_t(x)))  \:(\theta\cdot n)(x)\:\d x \d t + r(\theta),
\end{array}$$
where we have used an integration by parts from the first line to the second one, together with the continuity of $u_\Omega(t,\cdot)$ across $\Gamma(t)$, see \cref{lem.jumpu}. Since $\theta \in \Tad$ vanishes on $\partial D$, it follows that: 
$$ I_1(\theta) = r(\theta).$$

Let us now turn to the second integral $I_2(\theta)$, which rewrites:
$$ 
\begin{array}{>{\displaystyle}cc>{\displaystyle}r}
 I_2(\theta) &=&    \int_Q \sigma_{\Omega(t)}  \left(  m^\prime(0)(\theta) \frac{\d u_{\Omega}}{\d t}  -\Big[F_{xx}^{\prime}(0)(\theta) \Big]v \cdot \nabla u_\Omega  + v_1(\theta) \cdot \nabla u_\Omega  - F_{xt}^\prime(0)(\theta) \cdot \nabla u_\Omega \right)   \: p_\Omega \:\d x \d t  \\[1em]
 &=& \int_0^T \int_D   \sigma_{\Omega(t)} (x) \Big( (\dv\theta)(\varphi_t^{-1}(x)) \frac{\d u_{\Omega}}{\d t}(t,x) -  \Big[\nabla \varphi_t(\varphi_t^{-1}(x))\Big] \Big[ \nabla \theta(\varphi_t^{-1}(x)) \Big] \Big[\nabla \varphi_t^{-1}(x)\Big] v(t,x) \cdot \nabla u_\Omega (t,x) \\ [1em]
&& -  \Big[\nabla \varphi_t(\varphi_t^{-1}(x))\Big] \Big[ \nabla \theta(\varphi_t^{-1}(x)) \Big] \frac{\partial}{\partial t}(\varphi_t^{-1}(x)) \cdot \nabla u_\Omega  \Big) p_\Omega(t,x) \:\d x \d t  + r(\theta)  ,
 \end{array}
$$
where we have used the derivatives \cref{eq.expmtheta,eq.Fxtt,eq.Fxxt}.
Again, the change of variables $x \mapsto \varphi_t(x)$ in the innermost integral yields: 
\begin{multline*}
I_2(\theta) = 
 \int_0^T \int_D   \sigma_{\Omega}(x) \lvert \det\nabla \varphi_t(x) \lvert \Big( (\dv\theta)(x) \frac{\d u_{\Omega}}{\d t}(t,\varphi_t(x)) -  \Big[\nabla \varphi_t(x)\Big] \Big[ \nabla \theta(x) \Big] \Big[\nabla \varphi_t^{-1}(\varphi_t(x))\Big] v(t,\varphi_t(x)) \cdot \nabla u_\Omega (t,\varphi_t(x))\\ 
 -  \Big[\nabla \varphi_t(x)\Big] \Big[ \nabla \theta(x) \Big] \frac{\partial \varphi_t^{-1}}{\partial t}(\varphi_t(x)) \cdot \nabla u_\Omega(t,\varphi_t(x))  \Big) p_\Omega(t,\varphi_t(x)) \:\d x \d t
+r(\theta),
\end{multline*}
and so 
\begin{multline*}
I_2(\theta) = 
 \int_0^T \int_D   \sigma_{\Omega} (x) |\det\nabla \varphi_t(x)| \Big( (\dv\theta)(x) \frac{\d u_{\Omega}}{\d t}(t,\varphi_t(x)) -  \Big[ \nabla \theta(x) \Big] \Big[\nabla \varphi_t^{-1}(\varphi_t(x))\Big] v(t,\varphi_t(x)) \cdot \Big[\nabla \varphi_t(x)\Big] ^T \nabla u_\Omega (t,\varphi_t(x))\\ 
 - \Big[ \nabla \theta(x) \Big] \frac{\partial \varphi_t^{-1}}{\partial t}(\varphi_t(x)) \cdot \Big[\nabla \varphi_t(x)\Big] ^T\nabla u_\Omega(t,\varphi_t(x))  \Big) p_\Omega(t,\varphi_t(x)) \:\d x \d t
+r(\theta).
\end{multline*}
Decomposing the innermost integral as the sum of its contributions on $\Omega$ and $D\setminus \overline\Omega$, 
then integrating by parts with \cref{lem.ippNtheta}, we obtain:
\begin{multline*}
I_2(\theta) = 
-(\sigma_2-\sigma_1) \int_0^T \int_{\Gamma}   |\det\nabla \varphi_t(x)| \frac{\d u_{\Omega}}{\d t}(t,\varphi_t(x))  p_\Omega(t,\varphi_t(x)) (\theta\cdot n)(x) \:\d s \d t \\
+ \int_0^T \int_{\Gamma}  |\det\nabla \varphi_t(x)| \Big(\Big[\nabla \varphi_t^{-1}(\varphi_t(x)) \Big] v(t,\varphi_t(x)) \cdot n(x) + \frac{\partial \varphi_t^{-1}}{\partial t}(\varphi_t(x)) \cdot n(x) \Big) \\ \Big(\sigma_2 \nabla u_\Omega^+(t,\varphi_t(x)) -\sigma_1  \nabla u_\Omega^-(t,\varphi_t(x)) \Big) \cdot \Big(\Big[\nabla \varphi_t(x)\Big]\theta(x) \Big)     \: p_\Omega(t,\varphi_t(x)) \:\d s \d t  
+r(\theta).
\end{multline*} 
Eventually, remarking that the definition \cref{eq.defvel} of the velocity $v(t,x)$ implies that 
 $$ \Big[\nabla \varphi_t^{-1}(\varphi_t(x)) \Big] v(t,\varphi_t(x)) + \frac{\partial \varphi_t^{-1}}{\partial t}(\varphi_t(x)) = \frac{\partial }{\partial t} \left( \varphi_t^{-1}(\varphi_t(x))\right) = 0, $$
we obtain: 
 \begin{equation*}
I_2(\theta) = 
-(\sigma_2-\sigma_1) \int_0^T \int_{\Gamma}   |\det\nabla \varphi_t(x)| \frac{\d u_{\Omega}}{\d t}(t,\varphi_t(x))  p_\Omega(t,\varphi_t(x)) (\theta\cdot n)(x) \:\d s \d t 
+r(\theta).
\end{equation*}

We next turn to the third integral $I_3(\theta)$ on the right-hand side of \cref{eq.Jpvol}. At first, injecting the derivatives \cref{eq.Fxxt,eq.Athetap} into the defining formula, we have:
$$\begin{array}{>{\displaystyle}cc>{\displaystyle}l}
 I_3(\theta)&=&  \int_Q \nu_{\Omega(t)} \Big[A^\prime(0)(\theta)\Big] \nabla u_\Omega \cdot \nabla p_\Omega \:\d x \d t  \\[1em]
 &=& \int_0^T \int_D  \nu_{\Omega(t)}\Big(m'(0)(\theta)\I - \Big[F_{xx}^\prime(0)(\theta)\Big] - \Big[F_{xx}^\prime(0)(\theta)\Big]^T \Big)  \nabla u_\Omega \cdot \nabla p_\Omega \:\d x \d t \\[1em]
&=&  \int_0^T \int_D  \nu_{\Omega(t)}(x)\Bigg( (\dv \theta)(\varphi_t^{-1}(x))  \I 
   -  \Big[\nabla \varphi_t(\varphi_t^{-1}(x))\Big] \Big[ \nabla \theta(\varphi_t^{-1}(x)) \Big] \Big[\nabla \varphi_t^{-1}(x)\Big] \\ 
  && -  \Big[\nabla \varphi_t^{-1}(x)\Big] ^T \Big[ \nabla \theta(\varphi_t^{-1}(x)) \Big] ^T \Big[\nabla \varphi_t(\varphi_t^{-1}(x))\Big] ^T \Bigg)  \nabla u_\Omega(t,x) \cdot \nabla p_\Omega(t,x) \:\d x \d t  + r(\theta).
    \end{array}$$
Hence, a change of variables based on the diffeomorphism $\varphi_t$ in the innermost integral produces: 
\begin{multline*}
I_3(\theta)  =  \int_0^T \int_D  \nu_{\Omega}(x) \lvert \det \nabla \varphi_t(x) \lvert \Bigg( (\dv \theta)(x)  \I 
   -  \Big[\nabla \varphi_t(x)\Big] \Big[ \nabla \theta(x) \Big] \Big[\nabla \varphi_t^{-1}(\varphi_t(x))\Big] \\ 
   -  \Big[\nabla \varphi_t^{-1}(\varphi_t(x))\Big] ^T \Big[ \nabla \theta(x) \Big] ^T \Big[\nabla \varphi_t(x)\Big] ^T \Bigg)  \nabla u_\Omega(t,\varphi_t(x)) \cdot \nabla p_\Omega (t,\varphi_t(x)) \:\d x \d t  + r(\theta), 
\end{multline*}
and so 
\begin{multline*} 
I_3(\theta)
  =  \int_0^T \int_D  \nu_{\Omega}(x) \lvert \det \nabla \varphi_t (x) \lvert   (\dv \theta)(x)   \nabla u_\Omega(t,\varphi_t(x)) \cdot \nabla p_\Omega (t,\varphi_t(x)) \:\d x \d t \\
  - \int_0^T \int_D  \nu_{\Omega}(x) \lvert \det \nabla \varphi_t(x) \lvert  \Big[ \nabla \theta(x) \Big] \Big[\nabla \varphi_t^{-1}(\varphi_t(x))\Big]   \nabla u_\Omega(t,\varphi_t(x)) \cdot  \Big[\nabla \varphi_t(x)\Big]^T \nabla p_\Omega (t,\varphi_t(x))  \:\d x \d t \\
  -\int_0^T \int_D  \nu_{\Omega}(x) \lvert \det \nabla \varphi_t(x) \lvert  \Big[ \nabla \theta(x) \Big] \Big[\nabla \varphi_t^{-1}(\varphi_t(x))\Big]   \nabla p_\Omega(t,\varphi_t(x)) \cdot  \Big[\nabla \varphi_t(x)\Big]^T \nabla u_\Omega (t,\varphi_t(x))  \:\d x \d t 
   + r(\theta).
\end{multline*}
Decomposing each integral over $D$ as the sum of two integrals over $\Omega$ and $D \setminus \overline \Omega$, 
then using the integration by parts formulas in \cref{lem.ippNtheta}, we obtain:
\begin{multline*}
I_3(\theta)
  =  - \int_0^T \int_{\Gamma} \lvert \det \nabla \varphi_t(x)\lvert \Big( \nu_2 \nabla u_\Omega^+(t,\varphi_t(x)) \cdot \nabla p_\Omega^+ (t,\varphi_t(x)) - \nu_1  \nabla u_\Omega^-(t,\varphi_t(x)) \cdot \nabla p_\Omega^- (t,\varphi_t(x))\Big)   (\theta\cdot n)(x)\:\d s(x) \d t \\
  + \int_0^T \int_{\Gamma}  \lvert \det \nabla \varphi_t(x) \lvert   \Bigg(\nu_2  \Big( \Big[\nabla \varphi_t^{-1}(\varphi_t(x))\Big]   \nabla u_\Omega^+(t,\varphi_t(x)) \cdot n(x) \Big)  \Big( \Big[\nabla \varphi_t(x)\Big]^T \nabla p_\Omega^+ (t,\varphi_t(x)) \cdot \theta(x) \Big)     \\ 
  - \nu_1  \Big( \Big[\nabla \varphi_t^{-1}(\varphi_t(x))\Big]   \nabla u_\Omega^-(t,\varphi_t(x)) \cdot n(x) \Big)  \Big( \Big[\nabla \varphi_t(x)\Big]^T \nabla p_\Omega^- (t,\varphi_t(x)) \cdot \theta(x) \Big)  \Bigg) \:\d s (x) \d t \\
 + \int_0^T \int_{\Gamma}  \lvert \det \nabla \varphi_t(x) \lvert  \Bigg( \nu_2  \Big( \Big[\nabla \varphi_t^{-1}(\varphi_t(x))\Big]   \nabla p_\Omega^+(t,\varphi_t(x)) \cdot n (x) \Big)  \Big( \Big[\nabla \varphi_t(x)\Big]^T \nabla u_\Omega^+ (t,\varphi_t(x)) \cdot \theta(x) \Big)     \\ 
  - \nu_1  \Big( \Big[\nabla \varphi_t^{-1}(\varphi_t(x))\Big]   \nabla p_\Omega^-(t,\varphi_t(x)) \cdot n(x) \Big)  \Big( \Big[\nabla \varphi_t(x)\Big]^T \nabla u_\Omega^- (t,\varphi_t(x)) \cdot \theta(x) \Big)  \Bigg) \:\d s (x) \d t    + r(\theta), 
\end{multline*}
and we now proceed to simplify each of the three integrals in the above right-hand side, which we denote by $J_1(\theta)$, $J_2(\theta)$ and $J_3(\theta)$, respectively. 

At first, we introduce the tangential and normal components of the gradients of $u_\Omega(t,\cdot)$ and $p_\Omega(t,\cdot)$; using the jump relations from \cref{lem.jumpu,rem.jumpp}, $J_1(\theta)$ rewrites:
\begin{multline*}
J_1(\theta) = 
  - (\nu_2 - \nu_1) \int_0^T \int_{\Gamma} \lvert \det \nabla \varphi_t(x) \lvert \nabla_{\Gamma(t)} u_\Omega(t,\varphi_t(x)) \cdot \nabla_{\Gamma(t)} p_\Omega (t,\varphi_t(x))    (\theta\cdot n)(x)\:\d s (x) \d t \\
   - \left(\frac{1}{\nu_2} - \frac{1}{\nu_1} \right) \int_0^T \int_{\Gamma} \lvert \det \nabla \varphi_t (x) \lvert \Big( \nu_{\Omega(t)} \nabla u_\Omega(t,\varphi_t(x)) \cdot n_{\Omega(t)}(\varphi_t(x)) \Big)  \Big( \nu_{\Omega(t)} \nabla p_\Omega(t,\varphi_t(x)) \cdot n_{\Omega(t)}(\varphi_t(x)) \Big)   (\theta\cdot n)(x)\:\d s (x)\d t.
\end{multline*}

Let us next deal with $J_2(\theta)$; to this end, we observe that
$$  \lvert \det \nabla\varphi_t (x) \lvert \nu_2  \Big( \Big[\nabla \varphi_t^{-1}(\varphi_t(x))\Big]   \nabla u_\Omega^+(t,\varphi_t(x)) \cdot n(x) \Big) = \nu_2 \Big(  \nabla u_\Omega^+(t,\varphi_t(x)) \cdot \left( \Big[\com(\nabla \varphi_t(x))\Big] n(x) \right) \Big),$$
where $\com(M)$ is the cofactor matrix of a $d \times d$ matrix. 
Hence, using \cref{lem.nortan} and the jump relations from \cref{lem.jumpu}, it follows that
\begin{equation}
\begin{array}{>{\displaystyle}cc>{\displaystyle}l}
 \lvert \det \nabla\varphi_t (x) \lvert \nu_2 \Big( \Big[\nabla \varphi_t^{-1}(\varphi_t(x))\Big]   \nabla u_\Omega^+(t,\varphi_t(x)) \cdot n(x) \Big) &=&   \lvert \det \nabla\varphi_t (x)\lvert \nu_1  \Big( \Big[\nabla \varphi_t^{-1}(\varphi_t(x))\Big]   \nabla u_\Omega^-(t,\varphi_t(x)) \cdot n(x) \Big) \\
 &=& \lvert \com(\nabla\varphi_t((x))) n(x) \lvert \Big( \nu_{\Omega(t)} \nabla u_\Omega \cdot n_{\Omega(t)} (t,\varphi_t(x)) \Big),
\end{array}
\end{equation}
and so: 
\begin{multline}
J_2(\theta)  = \\
    \int_0^T \int_{\Gamma}    \lvert \com(\nabla \varphi_t(x)) n(x) \lvert   \Big(  \nu_\Omega \nabla u_\Omega \cdot n_{\Omega(t)} (t,\varphi_t(x)) \Big)  \Big( \Big[\nabla \varphi_t(x)\Big]^T \nabla p_\Omega^+ (t,\varphi_t(x)) \cdot \theta (x) -  \Big[\nabla \varphi_t(x)\Big]^T \nabla p_\Omega^- (t,\varphi_t(x)) \cdot \theta (x)\Big)  \:\d s(x) \d t.
 \end{multline}
 We now bring into play the jump relations satisfied by $p_\Omega$, summarized in \cref{rem.jumpp} to simplify the second parenthesis in the above integrand: 
 \begin{multline*}
   \Big[\nabla \varphi_t(x)\Big]^T \nabla p_\Omega^+ (t,\varphi_t(x)) \cdot \theta (x) -  \Big[\nabla \varphi_t(x)\Big]^T \nabla p_\Omega^- (t,\varphi_t(x)) \cdot \theta(x) \\
 \begin{array}{>{\displaystyle}cc>{\displaystyle}l}
 &=&
   \Big( \nabla p_\Omega^+ (t,\varphi_t(x)) - \nabla p_\Omega^- (t,\varphi_t(x)) \Big) \cdot \Big(  \Big[\nabla \varphi_t(x)\Big]  \theta(x) \Big) \\[0.5em]
    &=& \left(\frac{1}{\nu_2} - \frac{1}{\nu_1}\right)   \Big( \nu_{\Omega(t)} \nabla p_\Omega(t,\varphi_t(x)) \cdot n_{\Omega(t)}(\varphi_t(x)) \Big)  \Big(  \Big[\nabla \varphi_t(x)\Big]  \theta(x)  \cdot  n_{\Omega(t)}(\varphi_t(x)) \Big) \\[1em]
        &=& \left(\frac{1}{\nu_2} - \frac{1}{\nu_1}\right)  \frac{\lvert \det \nabla \varphi_t  (x) \lvert }{ \lvert \com(\nabla \varphi_t(x)) n(x) \lvert} \Big( \nu_{\Omega(t)} \nabla p_\Omega(t,\varphi_t(x)) \cdot n_{\Omega(t)}(\varphi_t(x)) \Big) ( \theta\cdot n)(x).
  \end{array}
 \end{multline*}
 Hence, $J_2(\theta)$ equals: 
 \begin{equation*}
J_2(\theta)  = 
  \left(\frac{1}{\nu_2} - \frac{1}{\nu_1}\right)    \int_0^T \int_{\Gamma}   \lvert \det \nabla \varphi_t  (x) \lvert  \Big(  \nu_\Omega \nabla u_\Omega \cdot n_{\Omega(t)} (t,\varphi_t(x)) \Big)  \Big(  \nu_\Omega \nabla p_\Omega \cdot n_{\Omega(t)} (t,\varphi_t(x)) \Big) (\theta\cdot n)(x)  \:\d s(x) \d t.
 \end{equation*}
 
Applying a similar treatment to $J_3(\theta)$, it follows: 
\begin{multline*}
J_3(\theta) =
  - (\nu_2 -\nu_1) \int_0^T \int_{\Gamma} \lvert \det \nabla \varphi_t (x) \lvert  \nabla_{\Gamma(t)} u_\Omega(t,\varphi_t(x)) \cdot \nabla_{\Gamma(t)} p_\Omega (t,\varphi_t(x))    (\theta\cdot n)(x)\:\d s(x)  \d t \\
   + \left(\frac{1}{\nu_2} - \frac{1}{\nu_1} \right) \int_0^T \int_{\Gamma} \lvert \det \nabla \varphi_t (x) \lvert \Big( \nu_\Omega \nabla u_\Omega(t,\varphi_t(x)) \cdot n_{\Omega(t)}(\varphi_t(x)) \Big)  \Big( \nu_\Omega \nabla p_\Omega(t,\varphi_t(x)) \cdot n_{\Omega(t)}(\varphi_t(x)) \Big)   (\theta\cdot n)(x)\:\d s (x)\d t \\
   + r(\theta).
\end{multline*}

Eventually, the fourth and last integral $I_4(\theta)$ in the right-hand side of \cref{eq.refjuringu} equals:
$$I_4(\theta)=  \int_Q  \Big( m^\prime(0)(\theta) f + f_1(\theta)\Big) p_\Omega \:\d x \d t= \int_0^T \int_D  (\dv \theta)(\varphi_t^{-1}(x))  f (t,x) p_\Omega(t,x) \:\d x \d t  + r(\theta).$$
Since $p_\Omega(t,\cdot)$ is continuous across $\Gamma(t)$ (see again \cref{rem.jumpp}), it follows like in the treatment of $I_1(\theta)$ that
\begin{equation*}\label{eq.resI4}
 I_4(\theta) = r(\theta).
 \end{equation*}
 
Eventually, the desired surface expression \cref{eq.Jpsurf} of the shape derivative $J^\prime(\Omega)(\theta)$ is obtained by gathering the above expressions of the various integrals $I_i(\theta)$, $i=1,\ldots,4$ and $J_j(\theta)$, $j=1,\ldots,3$, 
and by checking that the integrals collected in the various remainders $r(\theta)$ do cancel, which is elementary, but tedious. 
This concludes the proof of the theorem.
\end{proof}

%%%%%%
\subsection{Extensions of these results to the non linear setting} \label{sec.extder}
%%%%%%

\noindent In this section, we outline how the shape differentiability of the functional $J(\Omega)$ in \cref{eq.defJOmpde}, proved in \cref{sec.magsd}, extends to the general situation 
where the function $\hat \nu : \mathbb R_+ \rightarrow \mathbb R_+$ characterizing the reluctivity of the ferromagnetic material is no longer a constant,
in which case the state equation \cref{eq.uOm} for $u_\Omega$ becomes a non linear evolution problem.

We content ourselves with the statement of the volume form of the shape derivative of $J(\Omega)$, 
as it is the only information used in our numerical implementation, see \cref{{sec_evasd}}. 
This formula results from a formal calculation similar to that involved in the proof of \cref{prop.sdJOmmag}, see also \cite{GanglLangerLaurainMeftahiSturm2015} for a related analysis.

\begin{proposition} \label{prop.sdJOmmagnl}
    The functional $J(\Omega)$ in \cref{eq.defJOmpde}, featuring the solution $u_\Omega$ to the non linear version of the evolution problem \cref{eq.uOm} with a reluctivity coefficient $\hat\nu$ as in \cref{eq.hypnu}, is shape differentiable at any bounded, Lipschitz shape $\Omega \subset \Drot$ and its shape derivative reads:
\begin{multline}\label{eq.Jpvolnl}
 \forall \theta \in \Tad, \quad J^\prime(\Omega)(\theta)   =  \int_Q m^\prime(0)(\theta) j (u_\Omega) \:\d x \d t \\
 +  \int_Q \sigma_{\Omega(t)}  \left(  m^\prime(0)(\theta) \frac{\mbox \d u_\Omega}{\mbox \d t}  - \Big[F_{xx}^{\prime}(0)(\theta)\Big] v \cdot \nabla u_\Omega  + v_1(\theta) \cdot \nabla u_\Omega + b^\prime(0)(\theta) \cdot \nabla u_\Omega \right)   \: p_\Omega \; \mbox \d x \mbox \d t \\
  + \int_Q \left( \nu_{\Omega(t)}(x,|\nabla u_\Omega|) \Big[A^\prime(0)(\theta)\Big] - \frac{\nu_{\Omega(t)}^\prime(x,|\nabla u_\Omega|)}{|\nabla u_\Omega|} \Big(\Big[F_{xx}^\prime(0)(\theta)\Big]^T \nabla u_\Omega \cdot \nabla u_\Omega \Big) \: \I \right) \nabla u_\Omega \cdot \nabla p_\Omega \; \mbox \d x \mbox \d t\\
- \int_{Q}  \Big( m^\prime(0)(\theta) f + f_1(\theta) \Big) p_\Omega \: \d x \mbox \d t.
\end{multline}
Here, $\nu^\prime_{\Omega(t)}(x,s)$ stands for the derivative of the partial mapping $s \mapsto \nu_{\Omega(t)}(x,s)$, and
 $p_\Omega$ is the solution to the adjoint evolution problem:
\begin{multline}\label{eq.varfpnl}
\text{Search for } p_\Omega \in \Wper \text{ s.t. } \forall w \in L^2(0,T;H^1_0(D)), \\
\int_Q  \left( - \sigma_{\Omega(t)} \frac{\d  p_\Omega}{\d t } w - \sigma_{\Omega(t)} (\dv v) p_\Omega w 
 + \left(\nu_{\Omega(t)}(x,|\nabla u_\Omega|) \I + \frac{\nu_{\Omega(t)}'(x,|\nabla u_\Omega|)}{|\nabla u_\Omega|} \nabla u_\Omega \otimes \nabla u_\Omega\right) \nabla p_\Omega \cdot \nabla w \right)\d x \d t  =\\
  -\int_Q j^\prime(u_\Omega) w \:\d x \d t.
\end{multline}
 The quantities $b^\prime(0)(\theta)$, $F_{xx}^\prime(0)(\theta)$, $f_1(\theta)$, $v_1(\theta)$ and $A^\prime(0)(\theta)$ are respectively given by \cref{eq.Athetap,eq.fthetap,eq.bthetap,eq.Fxxt,eq.vthetap}.
\end{proposition}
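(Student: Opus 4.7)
My plan is to mirror the five-step strategy used in the proof of \cref{prop.sdJOmmag}, treating the nonlinearity of the state problem as a perturbation of the linear analysis. The overall structure remains: (i) prove Fréchet differentiability of the transported potential $\overline{u_\Omega}(\theta) := u_{\Omega_\theta} \circ \Theta$ via the implicit function theorem; (ii) characterize its derivative $\mathring{u_\Omega}(\theta) \in \Wper$ by differentiating a suitable variational identity at $\theta = 0$; (iii) differentiate $J(\Omega_\theta) = \int_Q m(\theta) j(\overline{u_\Omega}(\theta)) \:\d x \d t$ to express $J^\prime(\Omega)(\theta)$ in terms of $\mathring{u_\Omega}(\theta)$; (iv) introduce the adjoint state $p_\Omega$ solving \cref{eq.varfpnl} to eliminate the implicit dependence on $\mathring{u_\Omega}(\theta)$ and obtain the desired volume form \cref{eq.Jpvolnl}.

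The implicit function theorem step requires the nonlinear analogue of the map $\calF$ in \cref{eq.defcalF}. Using the change of variables based on $\Theta$ and the invariance \cref{eq.snut}, the transported potential satisfies a variational problem structurally identical to \cref{eq.varfubar} except that the bilinear form $\int_Q \nu_{\Omega(t)} [A(\theta)] \nabla \overline{u_\Omega}(\theta) \cdot \nabla w \:\d x \d t$ is replaced by
\[
\int_Q \hat\nu\!\left( \bigl|[F_{xx}^{-T}(\theta)] \nabla \overline{u_\Omega}(\theta)\bigr| \right) m(\theta) [F_{xx}^{-1}(\theta) F_{xx}^{-T}(\theta)] \nabla \overline{u_\Omega}(\theta) \cdot \nabla w \:\d x \d t
\]
in the regions where $\hat\nu$ applies, with the other material regions unchanged. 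The corresponding operator $\calF : \Tad \times \Wper \to L^2(0,T;H^{-1}(D))$ is $\calC^1$ by the smoothness and the bounds \cref{eq.hypnu} on $\hat\nu$; its partial derivative $\partial_u \calF(0,u_\Omega)$ is the linearized operator appearing in \cref{eq.varfpnl}, which is strongly monotone and coercive (with coercivity constant controlled by $\underline\nu$ thanks to the monotonicity assumption \cref{eq.hypnu}), hence an isomorphism between the appropriate spaces. This step is where the nonlinear theory truly differs from \cref{prop.sdJOmmag}, and I expect it to be the main technical obstacle: verifying rigorously that the partial derivative at $(0, u_\Omega)$ is a Banach-space isomorphism requires care because the linearization involves the matrix coefficient $\hat\nu(|\nabla u_\Omega|) \I + \frac{\hat\nu^\prime(|\nabla u_\Omega|)}{|\nabla u_\Omega|} \nabla u_\Omega \otimes \nabla u_\Omega$, which is only bounded and uniformly elliptic under the strong monotonicity and Lipschitz assumptions of \cref{eq.hypnu}; the well-posedness of the adjoint problem \cref{eq.varfpnl} follows from the same argument combined with the techniques sketched in \cref{app.varft} for the linear case.

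Differentiating the variational identity at $\theta = 0$ yields a linear evolution problem for $\mathring{u_\Omega}(\theta)$ whose principal part involves exactly the tangent stiffness operator from \cref{eq.varfpnl}. The new geometric terms stemming from the nonlinearity come from differentiating $\hat\nu\!\left( |[F_{xx}^{-T}(\theta)] \nabla \overline{u_\Omega}(\theta)| \right) [F_{xx}^{-1} F_{xx}^{-T}]$ at $\theta = 0$ via the chain rule: the derivative of the argument of $\hat\nu$ at $\theta = 0$ produces $-\frac{1}{|\nabla u_\Omega|} [F_{xx}^\prime(0)(\theta)]^T \nabla u_\Omega \cdot \nabla u_\Omega$ after contracting with $\nabla u_\Omega$, which combined with the derivative of $A(\theta)$ gives exactly the bracketed matrix coefficient appearing inside the third integral of \cref{eq.Jpvolnl}. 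The remaining $\sigma$-dependent, source, and convective terms are unchanged from the linear case since they do not involve the reluctivity.

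Finally, following Step 4 of the proof of \cref{prop.sdJOmmag}, I test the adjoint problem \cref{eq.varfpnl} with $w = \mathring{u_\Omega}(\theta)$ and conversely the linearized state equation with $w = p_\Omega$. Applying \cref{lem.intdudtp} together with the $T$-periodicity in $\Wper$ and \cref{eq.hypvphi2,eq.hypvphi3} (so that $\sigma_{\Omega(T)} = \sigma_{\Omega(0)}$) cancels the time boundary contributions, and equating the resulting expressions yields a formula for $\int_Q j^\prime(u_\Omega) \mathring{u_\Omega}(\theta) \:\d x \d t$ containing precisely the new nonlinear contribution and the linear terms already identified in \cref{eq.refjuringu}. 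Substituting into $J^\prime(\Omega)(\theta) = \int_Q m^\prime(0)(\theta) j(u_\Omega) \:\d x \d t + \int_Q j^\prime(u_\Omega) \mathring{u_\Omega}(\theta) \:\d x \d t$ produces \cref{eq.Jpvolnl}. Since only the volume form is stated, no integration by parts on $\Gamma$ is needed; this bypasses the most tedious bookkeeping of the linear proof, and the regularity issues related to jump conditions for $u_\Omega$ and $p_\Omega$ only play a role implicitly through the well-posedness of the linearized problems.
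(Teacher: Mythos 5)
Your proposal is correct and follows essentially the same route as the paper, which in fact does not write out a proof of this proposition at all but merely states that the formula "results from a formal calculation similar to that involved in the proof of \cref{prop.sdJOmmag}"; your four steps are precisely that calculation, and your identification of where the extra terms come from (the chain rule applied to $\hat\nu\bigl(\lvert [F_{xx}^{-T}(\theta)]\nabla\overline{u_\Omega}(\theta)\rvert\bigr)$, splitting into the tangent-stiffness coefficient $\hat\nu(|\nabla u_\Omega|)\I + \tfrac{\hat\nu'(|\nabla u_\Omega|)}{|\nabla u_\Omega|}\nabla u_\Omega\otimes\nabla u_\Omega$ acting on $\nabla\mathring{u_\Omega}(\theta)$ and the geometric term $-\tfrac{\nu'_{\Omega(t)}}{|\nabla u_\Omega|}([F_{xx}'(0)(\theta)]^T\nabla u_\Omega\cdot\nabla u_\Omega)\,\I$) matches the stated formula, while the uniform ellipticity of the linearized operator that you flag as the main obstacle is exactly what the paper's remark following the proposition establishes from \cref{eq.hypnu}. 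The only caveat, which applies equally to the paper's own formal treatment, is that \cref{eq.hypnu} by itself gives only Lipschitz continuity of $s\mapsto\hat\nu(s)s$, so the differentiability of $\hat\nu$ (and hence the applicability of the implicit function theorem) is an additional tacit assumption rather than a consequence of the stated hypotheses.
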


Again, note that the formula in \cref{eq.Jpvolnl} is supplemented by the terms \cref{eq.suppMagn} in the presence of permanent magnetization, see \cref{eq.usedvM}.
\begin{remark}
As usual in the practice of the adjoint method, the defining problem for $p_\Omega$ is (the transpose of) the linearized version of the state problem \cref{eq.uOm} for $u_\Omega$. 
In particular, the well-posedness of \cref{eq.varfpnl} is not an immediate consequence of that of \cref{eq.uOm}. In the present context, this property results from
similar arguments to those in the proof of \cref{th.wellposed}, once we have observed that the matrix field 
$$ A(x) :=  \left( \hat \nu(|\nabla u_\Omega|) \I+ \frac{\hat\nu^\prime(|\nabla u_\Omega|)}{|\nabla u_\Omega|} \nabla u_\Omega \otimes \nabla u_\Omega\right) $$
induces a linear and uniformly elliptic operator in $\Dmag$. To see this, we first note that, 
by taking $s_2 = s$, $s_1 = s+h$ in \cref{eq.hypnu} for $s > 0$ and sufficiently small $h \in \R$, then dividing both sides by $h^2$ and letting $h\to 0$, we obtain:
\begin{equation}\label{eq.minnuprime}
 s \hat\nu^\prime(s) + \hat\nu(s) \geq \underline \nu.
 \end{equation}
Now, for any unit vector $\xi \in \R^d$, the operator attached to $A(x)$ satisfies:
$$ A(x) \xi \cdot \xi = \hat \nu(|\nabla u_\Omega|) \lvert \xi \lvert^2+ \frac{\hat \nu'(|\nabla u_\Omega|)}{|\nabla u_\Omega|} \lvert \nabla u_\Omega \cdot \xi \lvert^2.$$
Then, if $\hat\nu^\prime(|\nabla u_\Omega|) \geq 0$, we obtain immediately that
$$ A(x)\xi \cdot \xi  \geq \underline \nu \lvert \xi \lvert^2.$$ 
If on the contrary $\hat \nu^\prime(|\nabla u_\Omega|) \leq 0$ the Cauchy-Schwarz inequality together with \cref{eq.minnuprime} yield:
$$\begin{array}{>{\displaystyle}cc>{\displaystyle}l}
A(x)\xi \cdot \xi &\geq &  \left(\hat\nu(|\nabla u_\Omega|) + \frac{\hat\nu^\prime(|\nabla u_\Omega|)}{|\nabla u_\Omega|} \lvert \nabla u_\Omega \lvert^2 \right)  \lvert \xi \lvert^2, \\
&\geq& \underline \nu  \lvert \xi \lvert^2, 
 \end{array} $$
 which is the expected conclusion.
\end{remark}

%%%%%%%%%%%%%%%%%%%%%%%%%%%%%%%
%%%%%%%%%%%%%%%%%%%%%%%%%%%%%%%
\section{Numerical methods}\label{sec.num}
%%%%%%%%%%%%%%%%%%%%%%%%%%%%%%%
%%%%%%%%%%%%%%%%%%%%%%%%%%%%%%%

\noindent The shape optimization problems of interest in this work are of the form
\begin{equation}\label{eq.minJnum}
 \min\limits_{\Omega \subset \Drot} J(\Omega),
 \end{equation}
where $J$ depends on the 2d ferromagnetic phase $\Omega$ within (the cross-section of) the rotor $\Drot$
via the solution $u_\Omega$ to the (linear or non linear) magneto-quasi-static problem \cref{eq.uOm}. 

In this section, we describe the main numerical methods employed in our shape optimization framework. 
After sketching the overall strategy in \cref{sec.numstrat}, 
we outline in \cref{sec_resstad} the practical resolution of the state and adjoint evolution problems \cref{eq.uOm} and \cref{eq.pOm}.
We eventually discuss in \cref{sec_evasd} how the formulas provided by \cref{prop.sdJOmmag,prop.sdJOmmagnl} are used in practice to identify 
a suitable descent direction for $J(\Omega)$. 

%%%%%%
\subsection{Sketch of the shape optimization workflow} \label{sec.numstrat}
%%%%%%

\noindent In a nutshell, we solve \cref{eq.minJnum} thanks to a gradient descent algorithm, 
which improves the shape $\Omega$ by successive deformations along descent directions for the objective function $J(\Omega)$. 
The numerical discretization is based on a space-time finite element framework: the 3d space-time cylinder $Q = (0,T) \times D$ is consistently equipped with a tetrahedral mesh $\calK$ which (approximately) resolves the moving phases $\Omega(t)$, $\Omega_{\text{a}}(t)$, $\Dmag(t)$ etc. for all times $t \in [0,T]$. In our experiments, this space-time mesh is constructed by defining the moving space-time geometry, e.g., using the Open Cascade Technology (OCCT) geometry kernel\footnote{\href{https://dev.opencascade.org/doc/refman/html/index.html}{https://dev.opencascade.org/doc/refman/html/index.html}}, and creating an unstructured tetrahedral mesh for this geometry using the mesh generator Netgen/NGSolve\footnote{\href{https://ngsolve.org/}{https://ngsolve.org/}}. 

The global strategy is outlined in \cref{algo.strat}. In there, and throughout the following, 
we denote with an $^n$ superscript the instances of the different objects at stake (the shape $\Omega$, the descent direction $\theta$, ...) at each iteration $n=0, \ldots$ of the process. 

\begin{algorithm}[ht]
\caption{Shape gradient algorithm for the solution of \cref{eq.minJnum}.}\label{algo.strat}
\begin{algorithmic}[0]
\STATE \textbf{Initialization:} 3d tetrahedral mesh $\calK^0$ of $Q$ where the moving phases $\varphi_t(\Omega^0)$, $\varphi_t(\Omega^0_a)$, etc. are discretized for $t \in (0,T)$. 
\FOR{$n=0,...,$ until convergence}
\STATE \begin{enumerate}
\item Solve the state \cref{eq.uOm} and adjoint \cref{eq.varfpnl} equations for $u_{\Omega^n}$, $p_{\Omega^n} : Q \to \R$.
\item Evaluate the volume form of the shape derivative $J^\prime(\Omega^n)(\theta)$ \cref{eq.Jpvolnl}.
\item Identify a descent direction $\theta^n: D \to \R^d$ for $J(\Omega)$ from $\Omega^n$ using the Hilbertian procedure \cref{eq.HilbertTrick}.
\item Select a suitably small descent step $\tau^n$ and calculate the space-time deformation $\Theta^n$ associated to $\tau^n \theta^n$ via \cref{eq.stdef}. 
\item Modify the space-time mesh $\calK^n$ according to $\Theta^n$ to obtain the new mesh $\calK^{n+1}$. 
\end{enumerate}
\ENDFOR
\RETURN Mesh $\calK^n$ of $Q$, where the deformations $\Omega^n(t^k)$ of the optimized 2d shape $\Omega^n$ are explicitly discretized.
\end{algorithmic}
\end{algorithm}

Briefly, the algorithm starts with the datum of a tetrahedral mesh of the 3d space-time cylinder where the configurations of the various moving or fixed phases (the ferromagnetic phase $\varphi_t(\Omega^0)$, the air phase $\varphi_t(\Omega_{\text{a}}^0)$, etc.) are approximately discretized for $t\in[0,T]$. 

At each iteration $n=0,\ldots$ of the process, the evolution problems for $u_{\Omega^n}$ and $p_{\Omega^n}$ are solved on the mesh $\calK^n$ of the space-time cylinder via the space-time finite element method. 
This task is described in \cref{sec_resstad}. This allows to evaluate the volume form of the shape derivative $J^\prime(\Omega^n)$ and to extract a suitable descent direction $\theta^n$ from the latter via the so-called Hilbertian procedure. 
This operation is detailed in \cref{sec_evasd}. Then, the space-time deformation mapping $\Theta^n$ induced by $\tau^n \theta^n$ is calculated, where $\tau^n$ is a ``small'' descent step, 
and it is used to deform the mesh $\calK^n$ of $Q$ into a new mesh $\calK^{n+1}$ where the updated shape $\Omega^{n+1} = (\Id + \tau^n \theta^n)(\Omega^n)$ (and the other constituent phases of the motor) is explicitly discretized.

%%%%%%
\subsection{Resolution of the state and adjoint equations} \label{sec_resstad}
%%%%%%

\noindent At any iteration $n$ of the execution of \cref{algo.strat}, 
the space-time cylinder $Q = (0,T)\times D$ is equipped with a tetrahedral mesh $\calK^n$ in which the deformed structures of the motor (in particular, the ferromagnetic phase $\Omega^n(t)$, the air phase $\Omega^n_{\text{a}}(t)$, etc.) for $t \in [0,T]$ are explicitly discretized. 
Hence, the motion of the structure of the motor induced by $\varphi_t$ is directly encoded in the space-time mesh.

This mesh serves as the support for the numerical solution of the state and adjoint equations \cref{eq.uOm,eq.varfpnl} for $u_{\Omega^n}$ and $p_{\Omega^n}$ via a space-time finite element method. 
These are discretized on the mesh $\calK^n$ by means of continuous, piecewise linear finite elements as suggested in \cite{steinbach2015}, 
see also \cite{GanglGobrialSteinbach2022} for an application to the parallel simulation of an electric machine. We mention that also higher order polynomial degrees could be used, see e.g. \cite{LangerSchafelner2022} for a related investigation in an optimal control setting. 
% \CD{Do all types of finite element spaces work in this setting of elliptic-parabolic evolution problem?}
The non linearity in the magneto-quasi-static problem \cref{eq.uOm} for the state $u_{\Omega^n}$ is handled thanks to a Newton-Raphson method with damping in order to ensure a decrease of the residual. \black
The time-periodic conditions can be treated in a straightforward way by identifying the degrees of freedom lying on the bottom and top sections of the mesh $\calK^n$ of the space-time cylinder $Q$ provided that the bottom and top surfaces of the space-time cylinder feature an identical mesh. \black
 
Our numerical experiments rely on the finite element environment \texttt{NGSolve} \cite{Schoeberl2014}. 
Note that the precise setting at hand does not allow to take advantage of the capabilities of this package to compute automatically shape derivatives in the context of more ``standard'' shape optimization problems \cite{GanglSturmNeunteufelSchoeberl2020}.

\begin{remark}
\noindent \begin{itemize}
\item As we have mentioned in the introduction, in principle, space-time finite element approaches allow for parallelization of the numerical resolution of an evolution problem in space-time by means of classical domain decomposition approaches, see e.g. \cite{GanglGobrialSteinbach2022}. In the same spirit, adaptive mesh refinement in space-time is possible \cite{LangerSchafelner2022}.
Last but not least, the state and adjoint equations could actually be gathered into a single system and be solved in parallel, allowing for another level of parallelization when compared to more conventional time-stepping schemes.
While we do solve the associated systems of linear equations in parallel, as discussed in \cref{sec.primde}, adaptivity and the combination of state and adjoint are not exploited in the present study, but they are interesting leads for future work. 
\item The present use of a space-time finite element method is possible since the physical problem under scrutiny arises in two space dimensions, 
thus resulting in a 3d space-time mesh. This strategy could be extended to the case of three space dimensions, provided numerical method for meshing $(3+1)$-dimensional space-time domains and solving related variational problems be available \cite{LangerSchafelner2022}.
\end{itemize}
\end{remark}

%%%%%%
\subsection{Evaluation of the shape derivative $J^\prime(\Omega)(\theta)$ and calculation of a descent direction} \label{sec_evasd}
%%%%%%%

\noindent At any iteration $n=0,\ldots$ of \cref{algo.strat}, the reference to which is omitted in this section for notational simplicity, 
we rely on the volume form of the shape derivative $J^\prime(\Omega)(\theta)$ to identify a descent direction for $J(\Omega)$.
As discussed in e.g. \cite{allaire2020survey,giacomini2017volumetric}, the use of the volume form is a priori less convenient to achieve this purpose than that of the surface form. 
Nevertheless, it presents several advantages, such as the less stringent regularity demanded from $\Omega$, $u_\Omega$, $p_\Omega$, a better consistency between the theoretical and discrete shape optimization settings \cite{hiptmair2015comparison}, and the possibility to conduct part of the evaluation in an automated way \cite{GanglSturmNeunteufelSchoeberl2020}.
 
To exploit the volume form of $J^\prime(\Omega)(\theta)$, we rely on the so-called ``Hilbertian method'', for which we refer to e.g. \cite{azegami1996domain,burger2003framework,de2006velocity}.
Let $V$ be a Hilbert space of vector fields on $D$, equipped with an inner product $b(\cdot,\cdot)$, which is continuously embedded in $\Tad$. 
We solve the following auxiliary variational problem for the shape gradient $\theta$ of the derivative $J^\prime(\Omega)$ associated to the Hilbert space $(V, b(\cdot,\cdot))$: 
\begin{equation}\label{eq.HilbertTrick}
 \text{Search for } \theta \in V \text{ s.t. for all } \eta \in V, \quad b(\theta, \eta) = J^\prime(\Omega)(\eta), 
 \end{equation}
If $\theta$ is the solution to \cref{eq.hilbert_trick}, then $- \theta$ is a descent direction for $J(\Omega)$, since
$$J'(\Omega)(-\theta) =-b(\theta, \theta) < 0.$$ 
Moreover, $\theta$ benefits from the features encoded in the space $V$, such as for instance a higher regularity than mere vector fields in $\Tad$, etc. 

Several choices are possible about $V$ and $b$, starting from Sobolev spaces $H^m(D)^d$, for $m$ large enough. 
One efficient and popular choice, albeit formal, is the following:
\begin{align} \label{eq.hilbert_trick}
V = H^1_0(D)^d, \text{ with the inner product }    b(\theta, \eta) := \int_D \Big( \alpha(x) (\nabla \theta + \nabla \theta ^\top) : \nabla \eta +  \beta(x) \theta \cdot \eta \Big) \:\d x,
\end{align}
where $\alpha, \beta: D \to \R$ are smooth, positive and non negative weights, respectively.
Intuitively, this choice, inspired by the linear elasticity system, produces shape gradient which ``resemble'' elastic displacements, and thereby cause as little compression as possible in the mesh. 
In the same spirit, one may also enrich $b(\cdot,\cdot)$ with a term attached to the Cauchy-Riemann equations, as a means to urge the solution $\theta$ to \cref{eq.HilbertTrick} to be (approximately) conformal, thus preserving (approximately) the angles of the mesh, see \cite{IglesiasSturmWechsung2018}.

The numerical solution of the identification problem \cref{eq.HilbertTrick} raises the need to calculate the volume form of $J^\prime(\Omega)(\theta)$ 
-- by this, we mean, for a given shape $\Omega$, to calculate the quantity $J^\prime(\Omega)(\theta)$ for a wide range of deformation fields $\theta$. 
To achieve this, we note that the shape derivatives \cref{eq.volumeadsd,eq.Jpvol,eq.Jpvolnl} of the functionals considered in this article can be written under the form
$$ J'(\Omega)(\theta) = \int_0^T \int_{\varphi_t(D)} \Big( \mathcal S_0(t,x) \cdot \theta (\varphi_t^{-1}(x)) + \mathcal S_1(t,x) : \nabla \theta ( \varphi_t^{-1}(x) )  \Big)\: \d x \:\d t,  $$
for some vector and matrix fields ${\mathcal S}_0 : Q \to \R^d$ and ${\mathcal S}_1 : Q \to \R^{d \times d}$,
 see \cite{LaurainSturm2016} for a general result in this spirit.
Hence, a change of variables yields:
\begin{align*}
    J'(\Omega)(\theta)  =& \int_0^T \int_{D} \lvert \mbox{det}\nabla \varphi_t(x) \lvert \Big(  \mathcal S_0(t , \varphi_t(x)) \cdot \theta(x) + \mathcal S_1(t,  \varphi_t(x)) : \nabla \theta(x) \Big) \; \mbox dx \: \mbox dt \\
    =&  \int_{D}  g_0(x) \cdot \theta(x) + g_1(x) : \nabla \theta(x)   \; \mbox{d}x,
\end{align*}
with 
\begin{equation}\label{eq.g0g1}
g_0(x) = \int_0^T \mathcal S_0(t, \varphi_t(x)) |\mbox{det}\nabla \varphi_t(x)|\; \d t , \text{ and } g_1(x) = \int_0^T \mathcal S_1(t, \varphi_t(x)) |\mbox{det}\nabla \varphi_t(x)| \; \d t.
\end{equation}
Hence, in practice, when it comes to evaluate $J^\prime(\Omega)(\theta)$, we first calculate the fields $g_0(x)$ and $g_1(x)$ on a triangular mesh $\calT$ of the spatial domain $D$. 
For instance, $\calT$ can be a mesh of the bottom surface of the space-time cylinder $Q$ enclosed in the total 3d mesh $\calK$. 
We use a piecewise constant approximation of the functions $g_i$ ($i=0,1$) on $\calT$: the constant value of $g_i$ on each triangle $T \in \calT$ is obtained by evaluating the ``vertical line integral'' featured in \cref{eq.g0g1} at the centroid of $T$; 
this is realized, in turn, by traveling the space-time mesh $\calK$ of $Q$ 
and using a composite trapezoidal rule. 

Eventually, once a descent direction $\theta$ is obtained for $J(\Omega)$ and a suitable descent step $\tau >0$ is selected, the corresponding space-time deformation mapping
$$\Theta(t,x) = (t, \varphi_t \circ (\Id  +\tau \theta) \circ \varphi_t^{-1}(x))$$
is used to update the space-time mesh $\calK$, see again \cref{fig_Qomega}.

%%%%%%%%%%%%%%%%%%%%%%%%%%%%%%%
%%%%%%%%%%%%%%%%%%%%%%%%%%%%%%%
\section{Numerical examples}\label{sec.numex}
%%%%%%%%%%%%%%%%%%%%%%%%%%%%%%%
%%%%%%%%%%%%%%%%%%%%%%%%%%%%%%%

\noindent In this section we discuss two numerical examples to illustrate the shape derivative formulas of \cref{prop.sdJOmmag,prop.sdJOmmagnl}
and the implementation of our numerical strategy dedicated to shape optimization problems involving moving domains. 
The first \cref{sec.1dex} deals with a rather academic situation where the one-dimensional optimized shape is subjected to a motion which is slightly more complicated than a rotation. 
We then turn in the \cref{sec.exrot} to the original motivation for this work, namely the optimization of the 2d structure of a rotating electric machine.

%%%%%%%%%%
\subsection{An academic, spatially one-dimensional example}\label{sec.1dex}
%%%%%%%%%

\noindent 
This first example arises in one space dimension ($d=1$). 
The domain $D$ is the unit interval $(0,1) \subset \R$ and we consider a simplified version of the structure described in \cref{sec.2dgeomotor}: 
$D$ is solely made of two complementary phases $\Omega^1 := \Omega$ and $\Omega^2 := D \setminus \overline \Omega$, whose constituent materials have constant  
conductivity and reluctivity, set to $\sigma_1 = \nu_1 = 1$ and $\sigma_2 = \nu_2 = 5$, respectively.

This arrangement of $D$ is subjected to the deformation $\varphi_t(x) = x + t x^2$ over the time period $(0,T) = (0,1)$. 
The physical behavior of $\Omega$ is then accounted for by the solution $u_\Omega$ to the linear version of the magneto-quasi-static equation \cref{eq.uOm},
where the source term $f(t,x)$ reads: 
\begin{align*}
    %f(t,x) = 100 \left(\frac{(x-x_m)^2}{1.3^2}+(t-t_m)^2-0.15^2 \right) \text{exp}\left(- \left(\frac{x-x_m}{0.25} \right)^2-\left(\frac{t-t_m}{0.1}\right)^2 \right) (1+10(x-0.8)^2),
    f(t,\varphi_t(x)) = (x-0.4)(x-0.6)\sqrt{\varphi_t(x)}(1+t-\varphi_t(x))
\end{align*} 
see \cref{fig_1dinit} (a). 

Admittedly, this setting does not exactly comply with the assumptions of \cref{sec.setting,sec.simplefunc,sec_calcsdmqs}: 
 the mapping $t \mapsto \varphi_t(x)$ is not periodic, and so the domains $\Omega$, $D$ at initial time do not coincide with their respective final versions $\Omega(T)$, $D(T)$. However, a simple verification reveals that all the findings in there -- and notably, the well-posedness \cref{th.wellposed} about \cref{eq.uOm} and the calculation of the shape derivative in \cref{prop.sdJOmmag} -- remain valid in this context, provided the time-periodicity condition encompassed in $\Wper$ is replaced with the more general version: 
$$u(0, x) = u(T, \varphi_T(x)), \quad x \in D. $$

In this setting, we consider the shape optimization problem 
\begin{equation}\label{eq.sopbacad}
 \min\limits_{\Omega \subset D} J(\Omega), \text{ where } J(\Omega) = \int_Q u_\Omega(t,x) \:\d x\d t
 \end{equation}
 where $u_\Omega$ solves \cref{eq.uOm} with $\sigma_1=10$, $\sigma_2=0$, $\nu_1 = 1, \nu_2 = 10$, recalling the notation of \cref{sec.magsd}. 
 This problem \cref{eq.sopbacad} is solved by means of the shape gradient \cref{algo.strat}.
The inner product $b(\cdot,\cdot)$ featured in the practice \cref{eq.hilbert_trick} of the Hilbertian method is induced by the Laplace operator:
we solve \cref{eq.HilbertTrick} with $\alpha=0.5$, $\beta=0$.

The initial shape $\Omega^0$ is the interval $(0.4,0.6)$, see \cref{fig_1dinit} (b), and the solution $u_{\Omega^0}$ to \cref{eq.uOm}
is depicted in \cref{fig_1dinit} (c). The deformation $\Theta^0$ applied to the space-time mesh at the first iteration is represented on \cref{fig_1dfinal} (a). 
We apply $59$ iterations of the gradient-based \cref{algo.strat} until the norm of the calculated descent direction gets lower than a prescribed threshold of $10^{-9}$. 
The optimized design $\Omega^*$ as well as the corresponding potential $u_{\Omega^*}$ are depicted in \cref{fig_1dfinal} (b) and (c), respectively. The value of $J(\Omega)$ decreases from $5.091\cdot 10^{-4}$ to $4.231\cdot 10^{-4}$ in the process. 
\black

\begin{figure}[!ht]
\centering
\begin{tabular}{ccc}
\begin{minipage}{0.33\textwidth}
\begin{overpic}[width=1.0\textwidth]{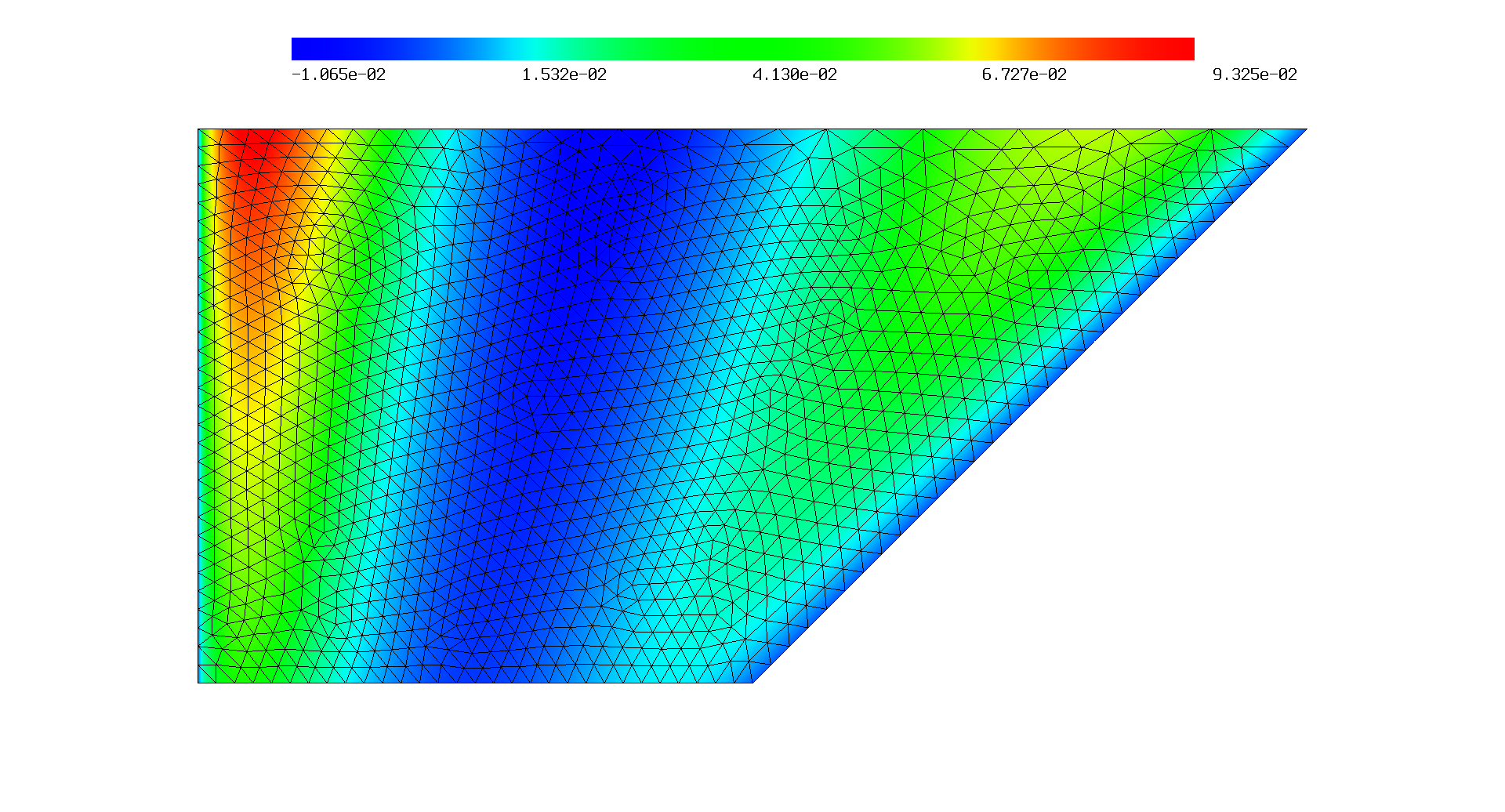}
\put(20,3){\fcolorbox{black}{white}{$a$}}
\end{overpic}
\end{minipage}& 
\begin{minipage}{0.33\textwidth}
\begin{overpic}[width=1.0\textwidth]{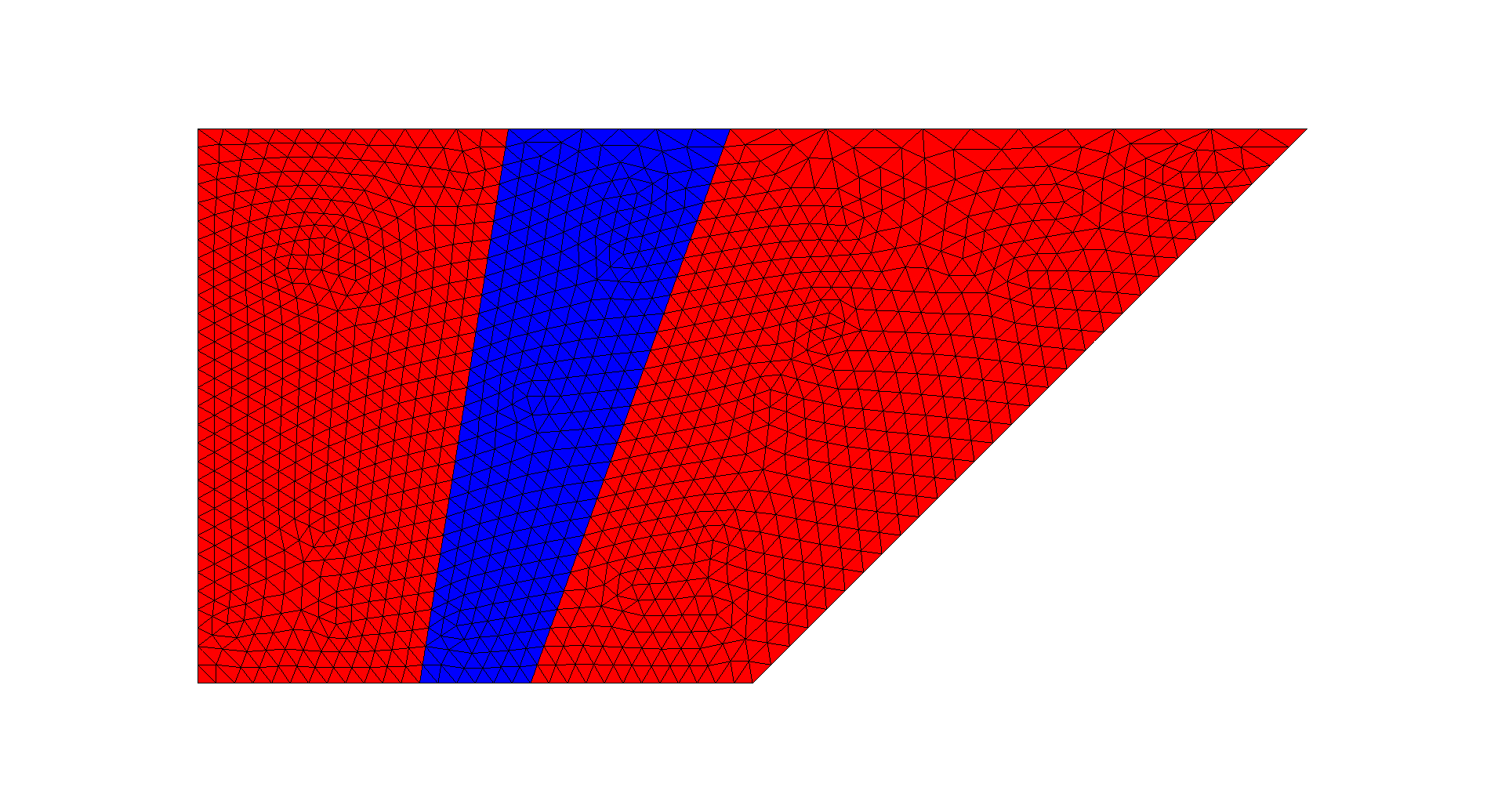}
\put(20,3){\fcolorbox{black}{white}{$b$}}
\end{overpic}
\end{minipage}&
\begin{minipage}{0.33\textwidth}
\begin{overpic}[width=1.0\textwidth]{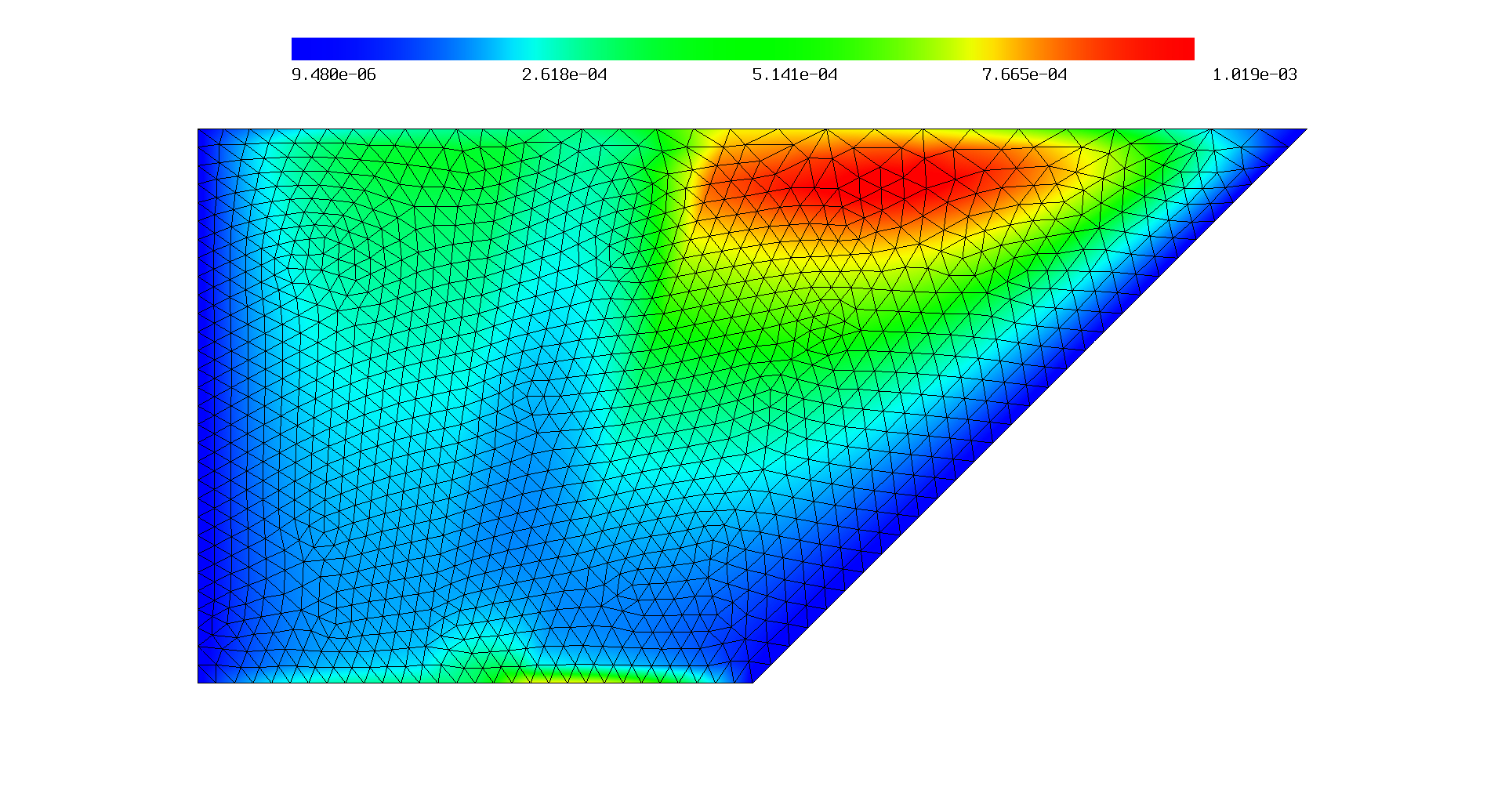}
\put(20,3){\fcolorbox{black}{white}{$c$}}
\end{overpic}
\end{minipage}
\end{tabular}
    \caption{\it (a) Right-hand side $f(t,x)$; (b) Space-time cylinder $Q$ associated to the initial design $\Omega^0$ in the 1d academic example of \cref{sec.1dex};  (c) Potential $u_{\Omega^0}$ associated to $\Omega^0$.}
\label{fig_1dinit}
\end{figure}

\begin{figure}[!ht]
\centering
\begin{tabular}{ccc}
\begin{minipage}{0.33\textwidth}
\begin{overpic}[width=1.0\textwidth]{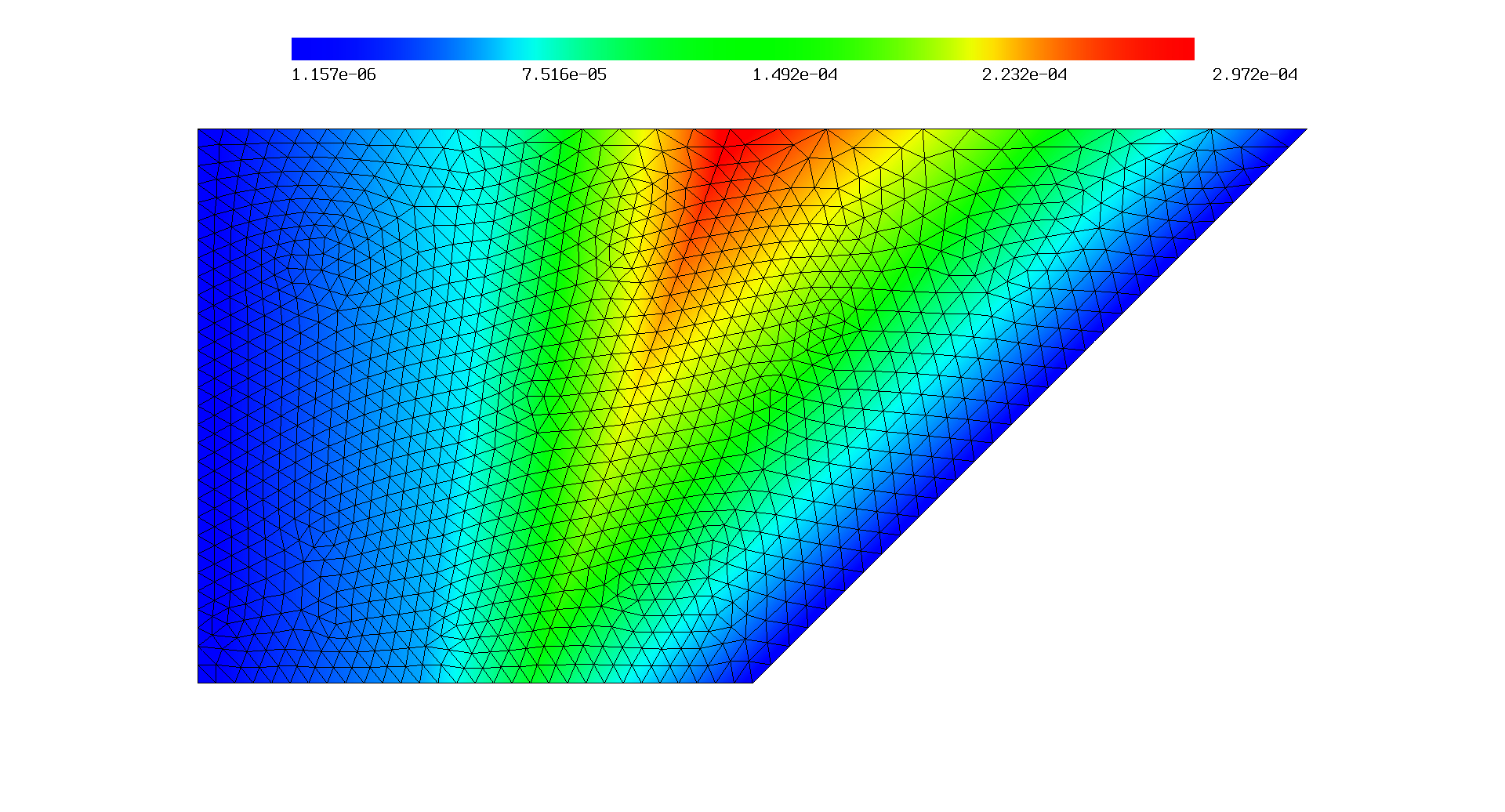}
\put(20,3){\fcolorbox{black}{white}{$a$}}
\end{overpic}
\end{minipage}&
\begin{minipage}{0.33\textwidth}
\begin{overpic}[width=1.0\textwidth]{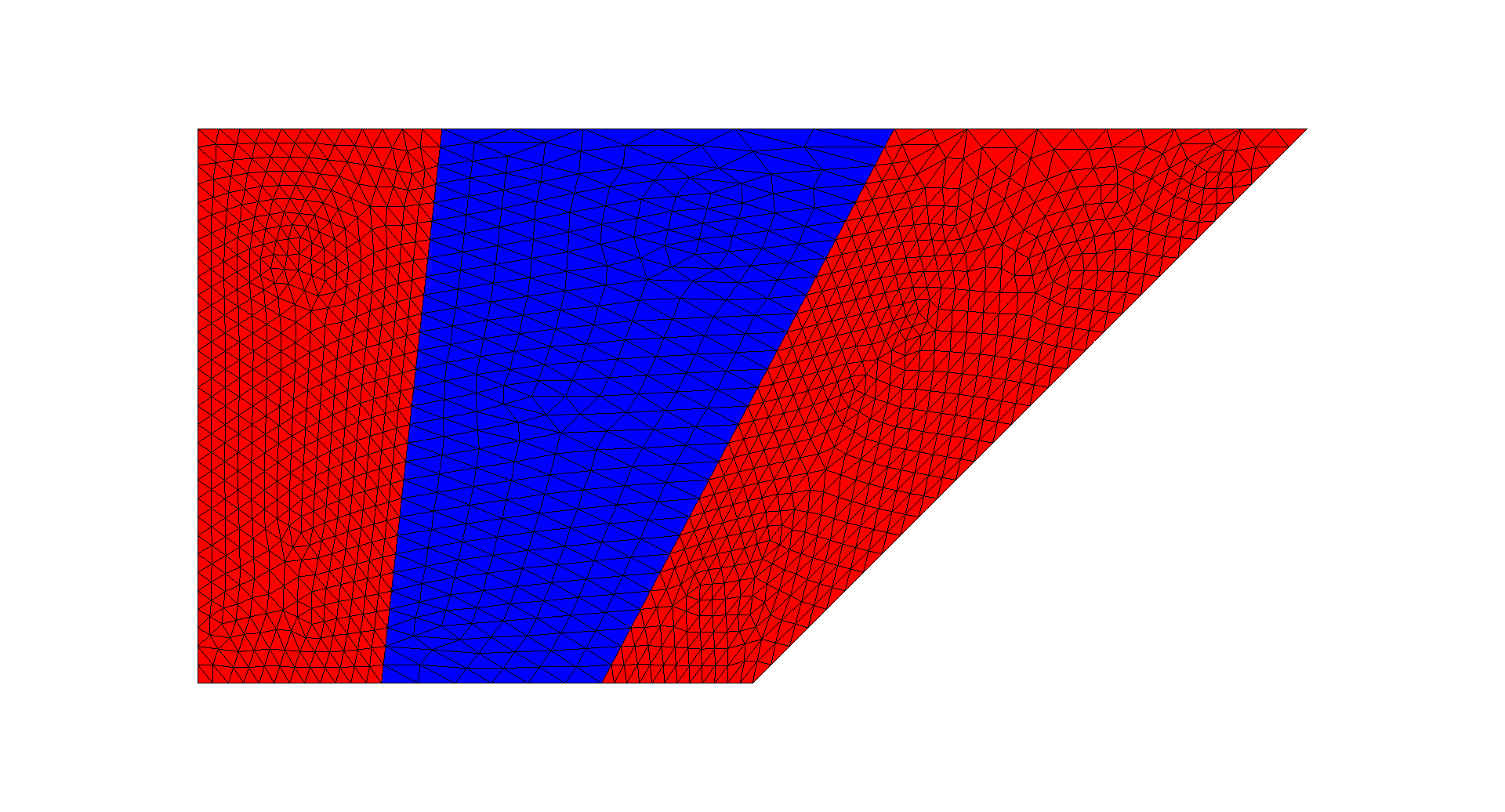}
\put(20,3){\fcolorbox{black}{white}{$b$}}
\end{overpic}
\end{minipage}& 
\begin{minipage}{0.33\textwidth}
\begin{overpic}[width=1.0\textwidth]{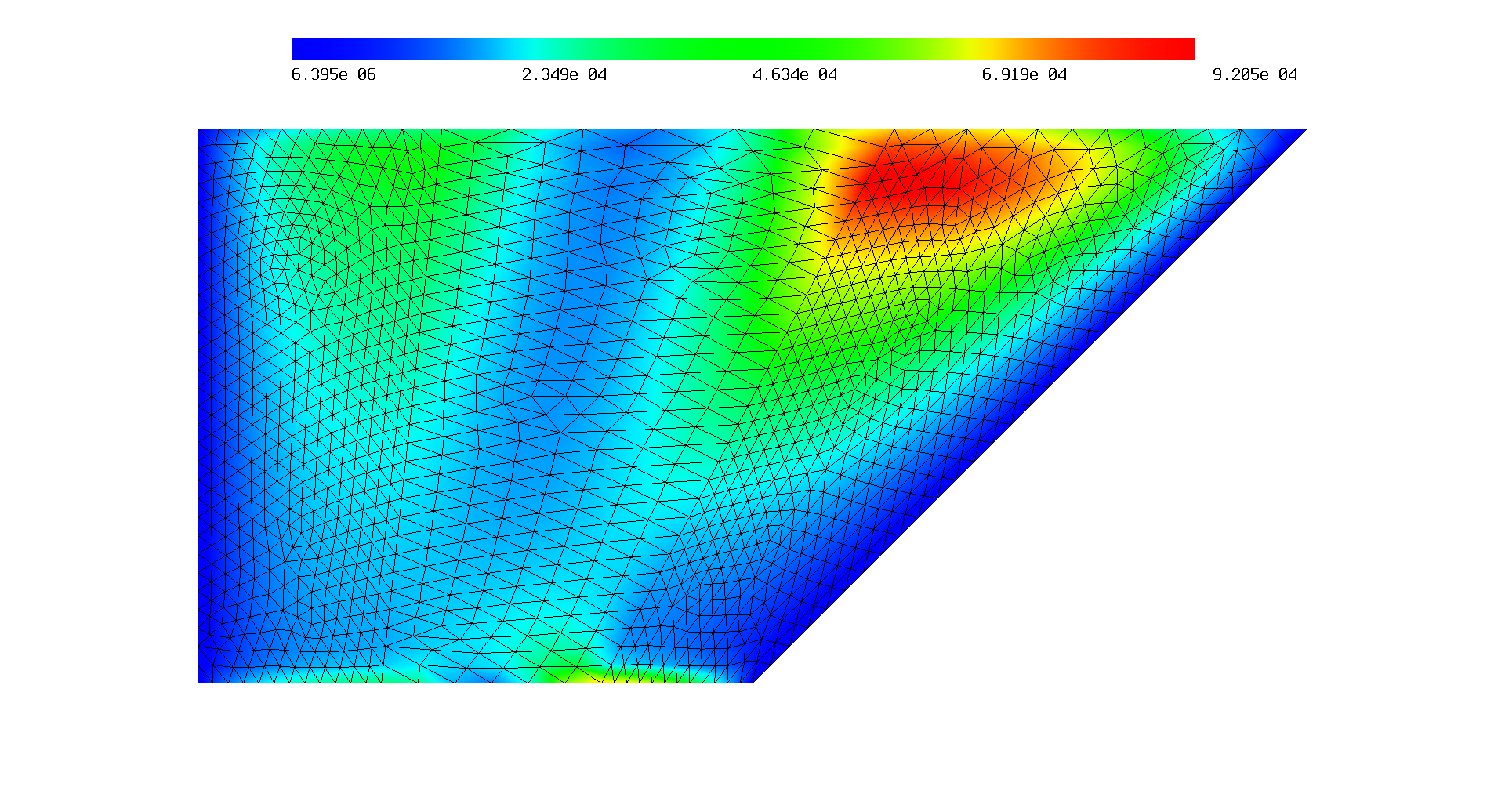}
\put(20,3){\fcolorbox{black}{white}{$c$}}
\end{overpic}
\end{minipage}
\end{tabular}
    \caption{\it (a) Descent direction $\Theta^0$ from the initial design $\Omega^0$, after extension of $\theta^0$ to the space-time cylinder; (b) Space-time cylinder $Q_{\Omega^*}$ associated to the optimized design $\Omega^*$; (c) Potential $u_{\Omega^*}$ for the optimized design $\Omega^*$.}
\label{fig_1dfinal}
\end{figure}

%%%%%%%%%%%%%%%
\subsection{Optimization of the structure of a rotating electric machine}\label{sec.exrot}
%%%%%%%%%%%%%%%%%

\noindent 
In this section, we consider the optimization of the structure of an electric motor. 

%%%%
\subsubsection{Description of the geometric setting}
%%%%

\noindent 
The considered machine operates at steady state with a constant rotational speed of 600 rotations per minute. 
The geometry of its 2d cross-section $D$ is symmetric with respect to both coordinates axes, see again \cref{fig_machine}; hence, 
the time horizon $T$ is chosen as the duration of a 90 degree rotation, i.e. $T = 0.025$ s.

The conductivities of the various materials at play take the values:
$$\sigma_f = \sigma_a = \sigma_c = 0, \quad \sigma_m = 10^6 \:\: \text{A}^2 \cdot  \text{s}^3 \cdot \text{kg}^{-1} \cdot \text{m}^{-3},$$ 
and the reluctivity of air, copper and magnet equal:
$$\nu_a = \nu_c = \frac{10^7} {4 \pi} \:\: \text{Am}\cdot \text{V}^{-1} \cdot \text{s}^{-1}, \text{ and } \nu_m = \frac{\nu_a }{1.05} \:\: \text{Am} \cdot \text{V}^{-1} \cdot\text{s}^{-1}.$$ 
The material behaviour of the ferromagnetic material is characterized by the nonlinear reluctivity function $\hat\nu$ defined by:
$$\hat \nu(s) = \nu_a-(\nu_a-c_1) \text{exp}(-c_2 s^{c_3}),$$ 
with the parameters $c_1 = 200$, $c_2=0.001$, $c_3=6$, see \cref{fig_nu} (a).
We also assume that the magnet region $\Dmag$ is the support of a permanent magnetization field $M$.
The direction of $M$ is depicted in \cref{fig_nu} (b) and its magnitude equals $\lvert M \lvert = \nu_m B_r  \: \text{A} \cdot \text{m}^{-1}$, where $B_r=1.216 \: \text{V}\cdot \text{s} \cdot\text{m}^{-2}$ is the magnetic remanence. 

The coil region $D_{\text{stat,c}}$ of the stator is occupied by a large number of thin copper wires wound around the iron core of the machine.  
This phase is subdivided into six regions $\Omega_{U_+},\Omega_{U_-},\Omega_{V_+},\Omega_{V_-},\Omega_{W_+},\Omega_{W_-}$ and the source current density $f$ is spatially homogeneous with smooth time variations in each of these:
\begin{align} \label{eq.imprcurrent}
    f(t,x) =& \Big(\mathds{1}_{\Omega_{U^+}}(x) - \mathds{1}_{\Omega_{U^-}}(x)\Big) I_U(t) + \Big( \mathds{1}_{\Omega_{V^+}}(x)  -  \mathds{1}_{\Omega_{V^-}}(x)  \Big)I_V(t) + \Big( \mathds{1}_{\Omega_{W^+}}(x) -  \mathds{1}_{\Omega_{W^-}}(x) \Big) I_W(t).
\end{align}
In this expression, the current densities read:
\begin{align*}
    I_U(t) = a \, \sin (4 \alpha(t) + \psi), &&
    I_V(t) = a \, \sin \left(4 \alpha(t) + \psi + \frac{4}{3} \pi \right), &&
    I_W(t) = a \, \sin \left(4 \alpha(t) + \psi + \frac{2}{3} \pi\right),
\end{align*}
where $\alpha(t) = \frac{\pi}{2} \frac t T$ and $\psi = \frac{\pi}{18}$ is called the current angle.
Here, the amplitude $a$ is given by $a = I \, c / \calA$ where $ I =  1555.64$ A is the product of the current intensity and the number of copper wires per coil\black, $c=2.75$ is the stacking factor and $\calA \approx 1.8\cdot 10^{-4}$ m$^2$ is the area of one coil.

In this application, the transformation $\varphi_t$ stands for a rotation of the inner part $\Drot$ at constant angular velocity $\alpha'(t) = \frac{\pi}{2T}$ s$^{-1}$, i.e.,
$$
%\label{eq_varphit_rot}
    \varphi_t(x) = \left\{
    \begin{array}{cl}
                        R_{\alpha(t)} x & \text{if } x \in \Drot \\
                        x & \text{if } x \in \Dstat,
        \end{array}\right.         
                 \qquad \text{where } R_{\alpha(t)} = \left(
\begin{array}{cc}
\cos \alpha(t) & -\sin \alpha(t) \\
\sin\alpha(t) & \cos \alpha(t)
\end{array}
\right).
$$

In this setting, we aim to optimize the average torque of the device by acting on the shape of the phase $\Omega \subset \Drot$ containing ferromagnetic material 
-- and thereby on the shape of the air pockets near the magnets in the rotor domain -- while keeping fixed the region $\Dmag$ containing the permanent magnets:
\begin{equation}\label{eq.minmtorque}
 \min\limits_{\Omega \subset D} J(\Omega), \text{ where } J(\Omega) := -\Tor(u_\Omega), 
 \end{equation}
where the magneto-quasi-static potential $u_\Omega$ is the solution to the non linear evolution problem \cref{eq.uOm} and the torque $\Tor(u)$ is defined in \cref{eqn:torque}.
Note that, strictly speaking, $J(\Omega)$ is not of the form considered in \cref{sec_calcsdmqs}, 
as it involves the gradient of the potential $u_\Omega$. Nevertheless, as noted in \cref{sec.sopb}, our calculations performed can be adapted straightforwardly to handle this case. 

%%%%%
\subsubsection{Practical implementation details} \label{sec.primde}
%%%%

\noindent At each iteration $n=0,\ldots$ of \cref{algo.strat}, the tetrahedral space-time mesh $\calK^n$ of the cylinder $Q$ consists of 320,597 vertices and 1,758,666 elements. 
As discussed in \cref{sec_resstad}, the non linear magneto-quasi-static problem \cref{eq.uOm} for $u_\Omega$ is solved by a Newton-Raphson method with damping, based on the space-time finite element framework. 
In the first optimization iteration $n=0$, this method is initialized with the function $0$, 
while at each subsequent iteration $n \geq 1$, the numerical solution $u_{\Omega^{n-1}}$ to \cref{eq.uOm} at the previous iteration is chosen instead,
which reduces the number of necessary Newton iterations. The systems of linear equations involved in the procedure are solved in parallel with \texttt{openMPI} \cite{open_mpi} and \texttt{MUMPS} \cite{mumps2000}, via the \texttt{PETSc}\cite{petsc-user-ref} library. We use \texttt{NGSolve} for both the meshing and the assembly of the finite element stiffness matrices and right-hand side vectors. 
Since, at the current state, \texttt{NGSolve} does not support MPI parallelization in combination with periodic boundary conditions, the meshing and finite element assembly operations can be carried out on one core only. 
We run the simulations on RICAM's RADON cluster\footnote{https://www.oeaw.ac.at/ricam/hpc} by using a single node with 40 cores and 1TB of RAM. 
The solution of the state problem \cref{eq.uOm} at the first optimization iteration $n=0$ requires 19 Newton iterations, for about one hour of computation. 
The needed CPU time to achieve this operation is reduced by half at the subsequent steps, as only 9 Newton iterations are needed. The (linear) adjoint problem \cref{eq.pOm} is solved in the same way within about two minutes. \black

The evaluation of the volume form of the shape derivative $J^\prime(\Omega)(\theta)$ is carried out along the lines of \cref{sec_evasd}.
Since only the interface $\Gamma$ between the ferromagnetic material and air phases $\Omega$ and $\Omega_a$ is optimized, we restrict 
the resulting descent directions $\theta$ considered in the shape derivative \cref{eq.Jpvolnl} and the auxiliary problem \cref{eq.hilbert_trick} to the rotor domain $\Drot$ deprived of the magnet region $\Dmag$, 
i.e. we actually solve the identification problem \cref{eq.HilbertTrick} with the Hilbert space $V= H^1_0(\Omega \cup \Omega_{a})^2$ and the inner product in \cref{eq.hilbert_trick} with the weights $\alpha(x) \equiv 1$ and $\beta(x) \equiv 0$. 
In addition, following \cite{IglesiasSturmWechsung2018} we add a term of Cauchy-Riemann type to the bilinear form $b(\cdot, \cdot)$ which helps in preserving the mesh quality.

\begin{remark}
Note that the smoothness assumption about the source function $f$ in the derivation of \cref{sec_calcsdmqs} is not satisfied by \cref{eq.imprcurrent}. 
However, as noted in \cref{eq.usedvM}, \cref{prop.sdJOmmag,prop.sdJOmmagnl} still hold in the present case where $f(t,\cdot)$ has support only in $\Dstat$.
\end{remark}

\begin{figure}
\centering
\begin{tabular}{cc}
\begin{minipage}{0.5\textwidth}
\begin{overpic}[width=1.0\textwidth]{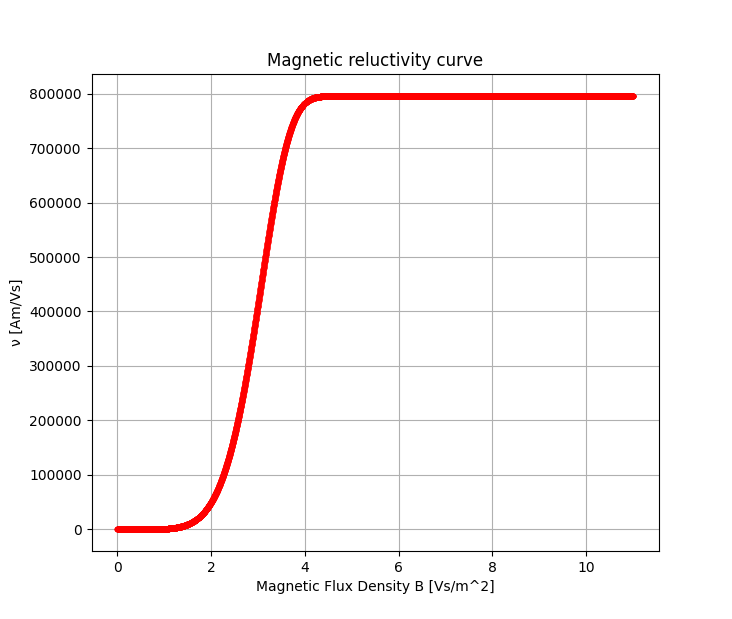}
\put(0,3){\fcolorbox{black}{white}{$a$}}
\end{overpic}
\end{minipage}&
\begin{minipage}{0.44\textwidth}
\begin{overpic}[width=1.0\textwidth]{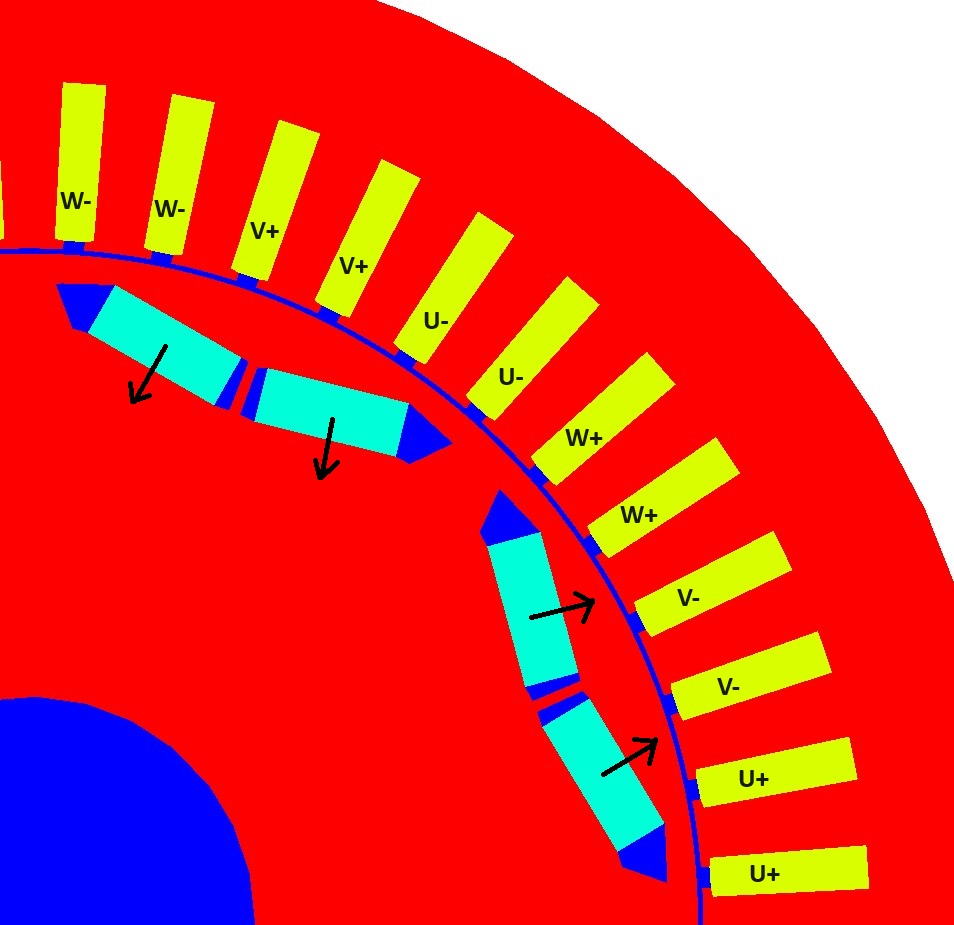}
\put(0,3){\fcolorbox{black}{white}{$b$}}
\end{overpic}
\end{minipage}
\end{tabular}
    \caption{\it Magnetic reluctivity curve $s \mapsto \hat \nu(s)$ in the ferromagnetic material considered in the motor optimal design example of \cref{sec.exrot}; (b) Zoom on the initial design $\Omega^0$.}
\label{fig_nu}
\end{figure}

\begin{figure}
\centering
\begin{tabular}{ccc}
\begin{minipage}{0.33\textwidth}
\begin{overpic}[width=1.0\textwidth]{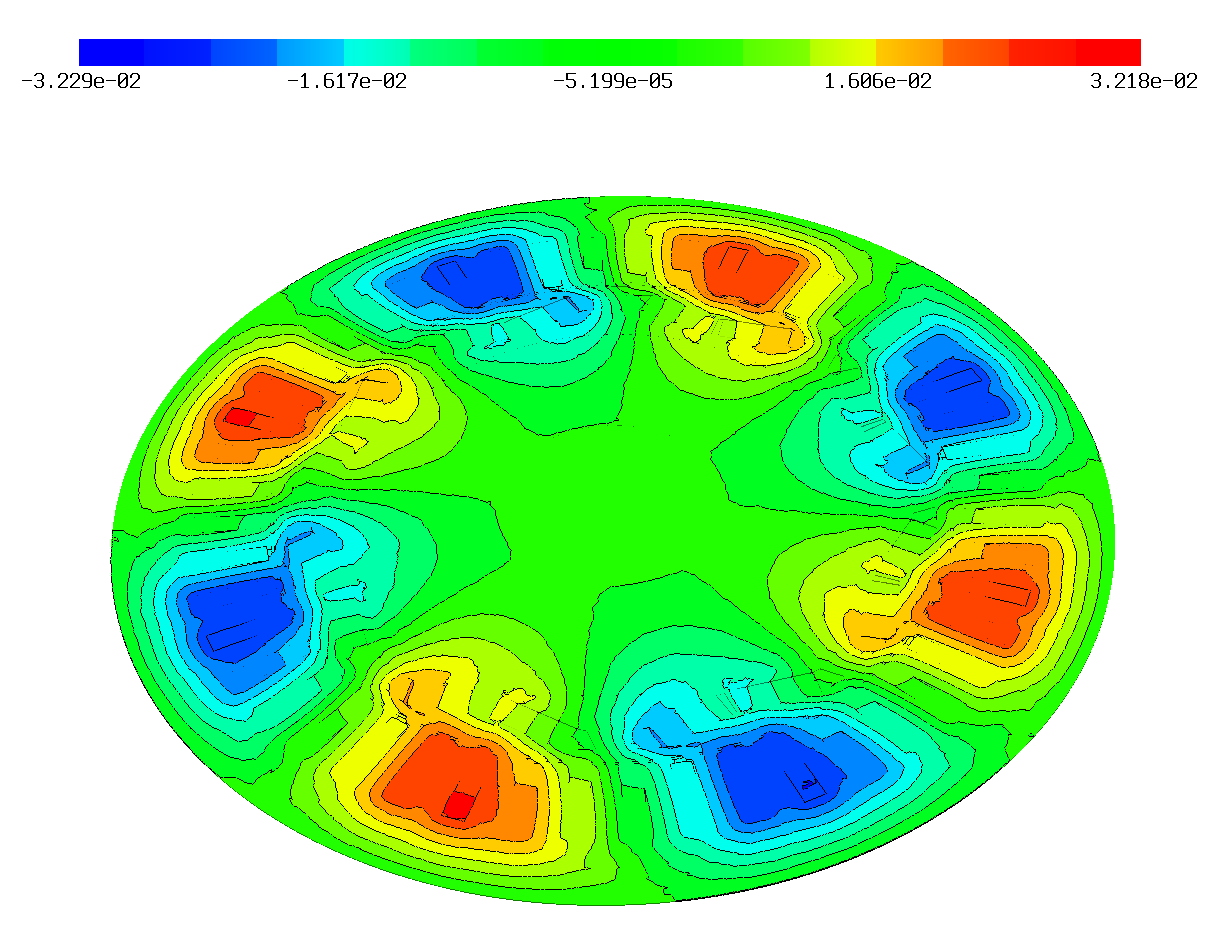}
\put(0,3){\fcolorbox{black}{white}{$a$}}
\end{overpic}
\end{minipage}&
\begin{minipage}{0.33\textwidth}
\begin{overpic}[width=1.0\textwidth]{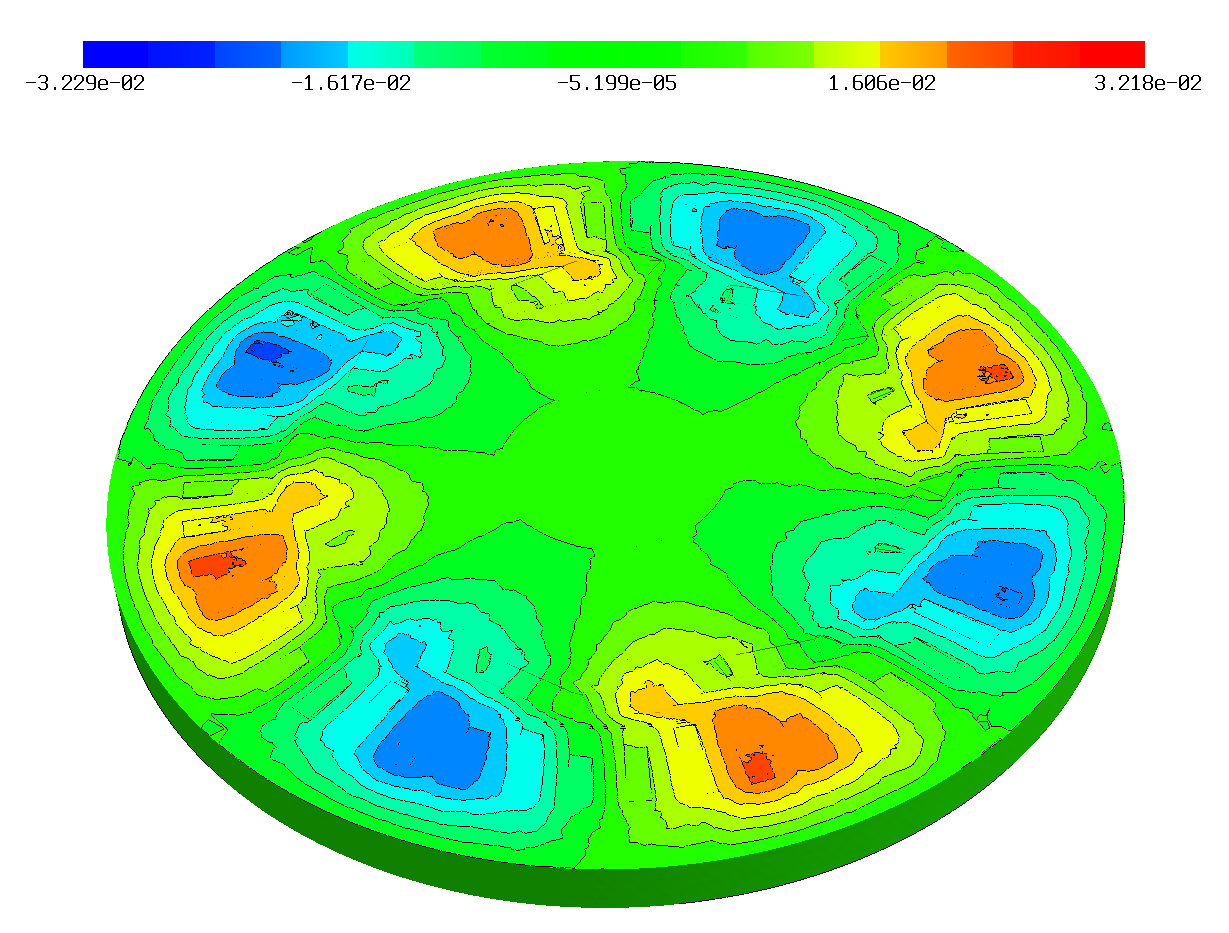}
\put(0,3){\fcolorbox{black}{white}{$b$}}
\end{overpic}
\end{minipage} & 
\begin{minipage}{0.33\textwidth}
\begin{overpic}[width=1.0\textwidth]{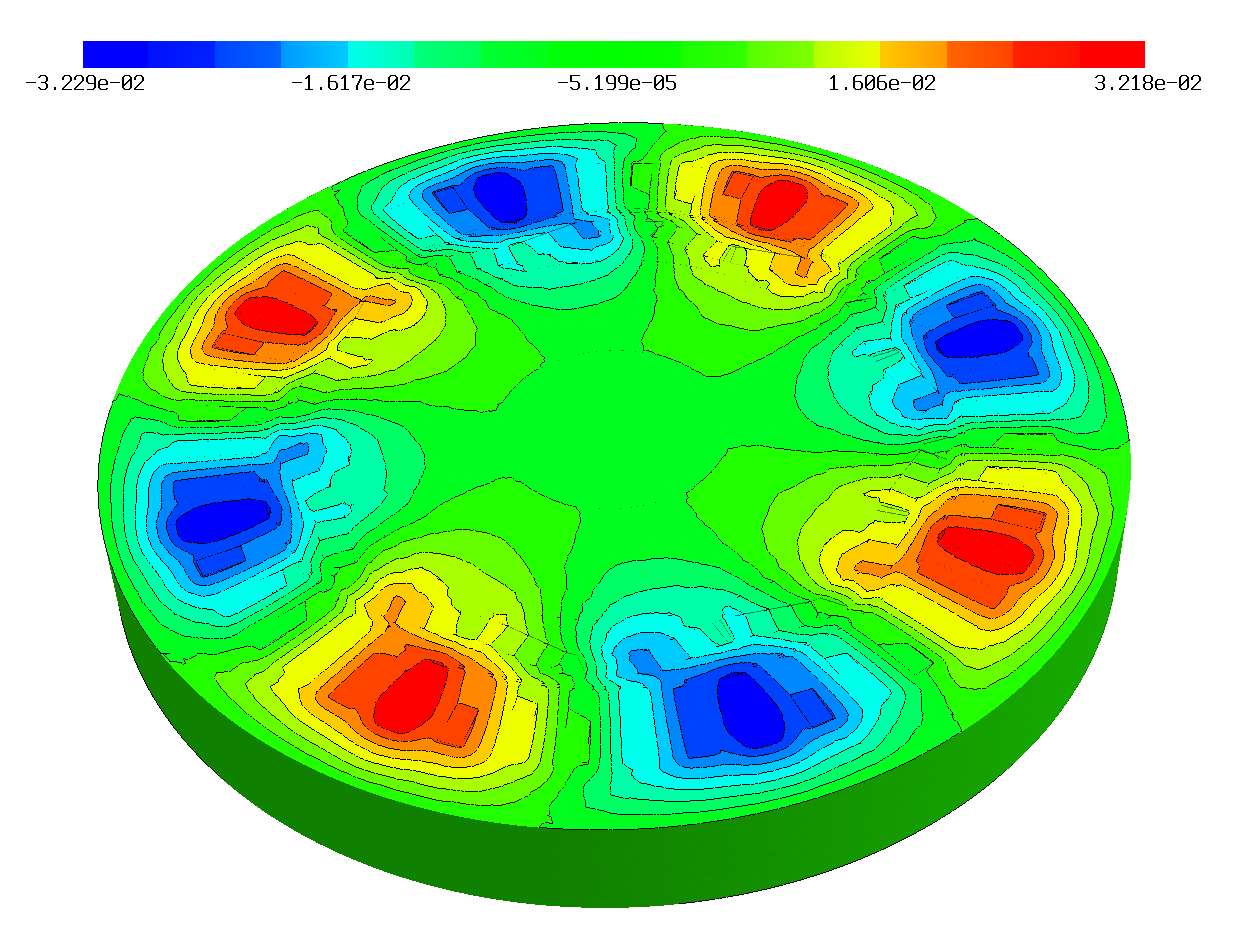}
\put(0,3){\fcolorbox{black}{white}{$c$}}
\end{overpic}
\end{minipage}
\end{tabular}
    \caption{\it Solution $u_{\Omega^0}$ to the version of the magneto-quasi-static state problem \cref{eq.uOm} considered in \cref{sec.exrot}, attached to the initial design $\Omega^0$ at times (a) $t=0$; (b) $t=T/2$; (c) $t=T$.}
\label{fig_stateinit}
\end{figure}

\begin{figure}
\centering
\begin{tabular}{cc}
\begin{minipage}{0.5\textwidth}
\begin{overpic}[width=1.0\textwidth]{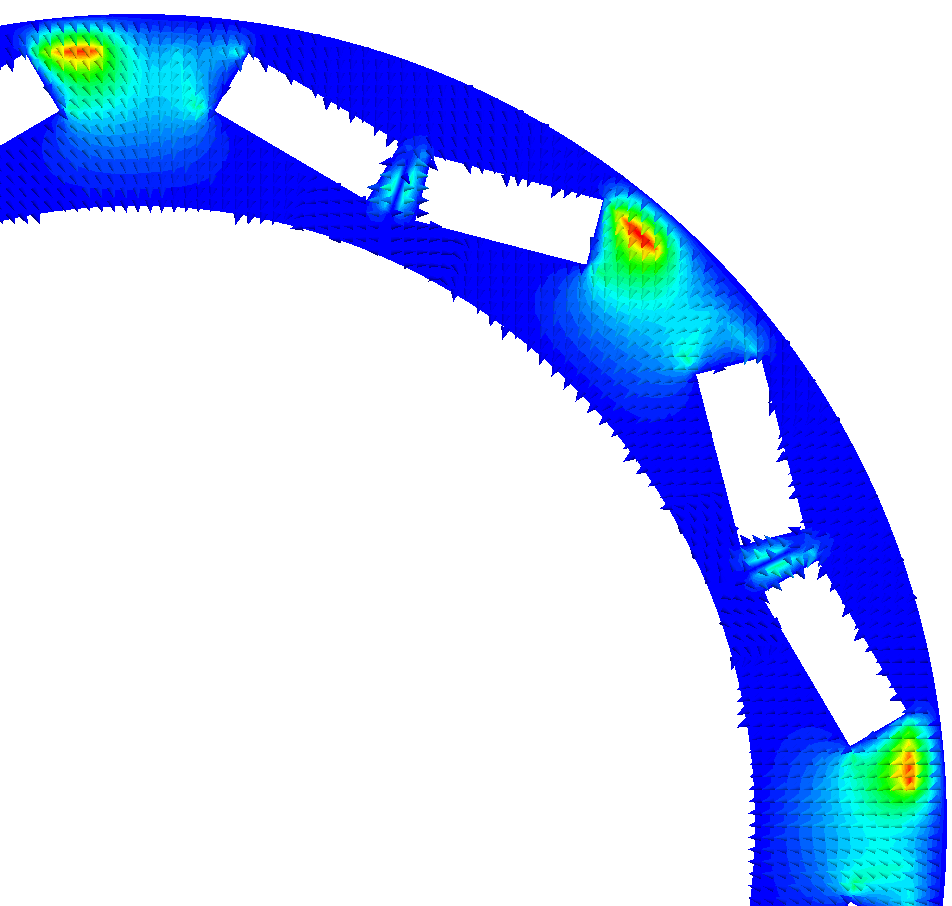}
\put(0,3){\fcolorbox{black}{white}{$a$}}
\end{overpic}
\end{minipage}&
\begin{minipage}{0.42\textwidth}
\begin{overpic}[width=1.0\textwidth]{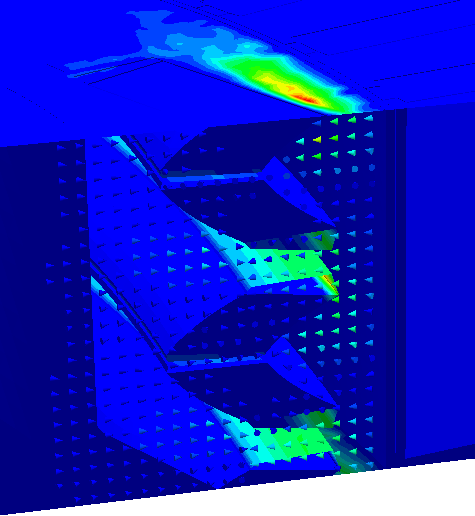}
\put(0,3){\fcolorbox{black}{white}{$b$}}
\end{overpic}
\end{minipage} 
\end{tabular}
    \caption{\it (a) Spatial descent direction $\theta^0$ at the initial iteration; (b) Corresponding space-time deformation $(0,\Theta^0(t,x)-x)$.}
\label{fig_shapegrad_init}
\end{figure}

\begin{figure}
\centering
\begin{tabular}{ccc}
\begin{minipage}{0.33\textwidth}
\begin{overpic}[width=1.0\textwidth]{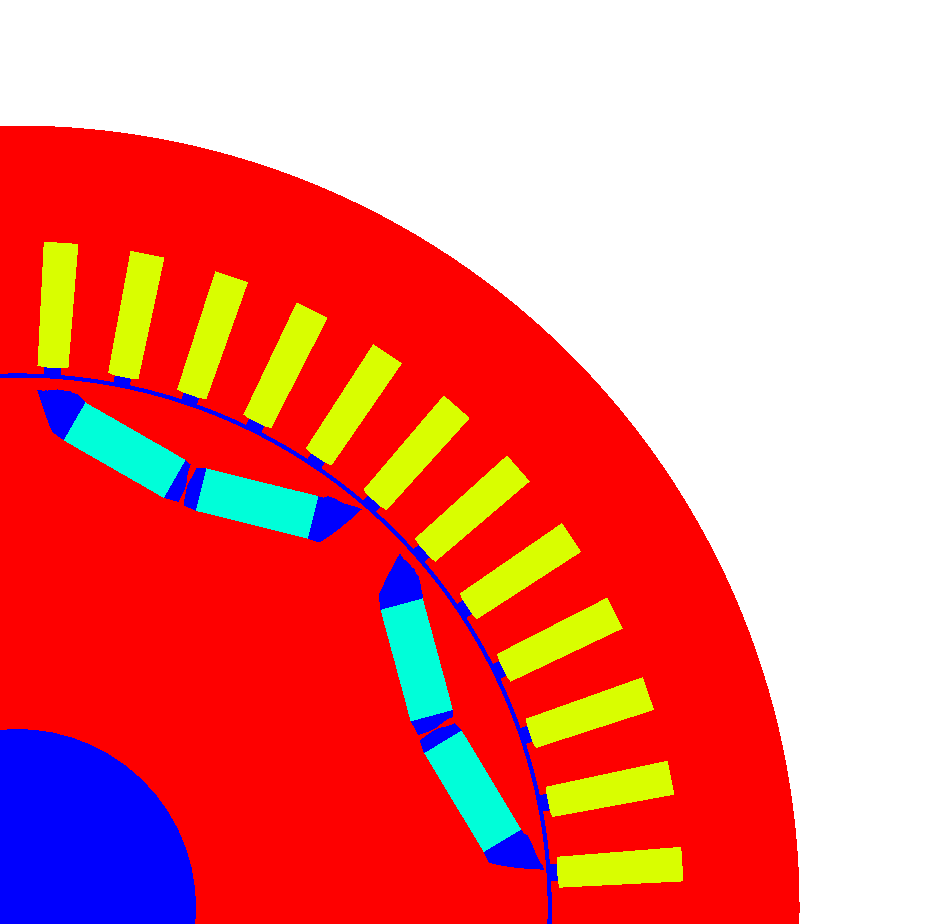}
\put(0,3){\fcolorbox{black}{white}{$a$}}
\end{overpic}
\end{minipage}&
\begin{minipage}{0.33\textwidth}
\begin{overpic}[width=1.0\textwidth]{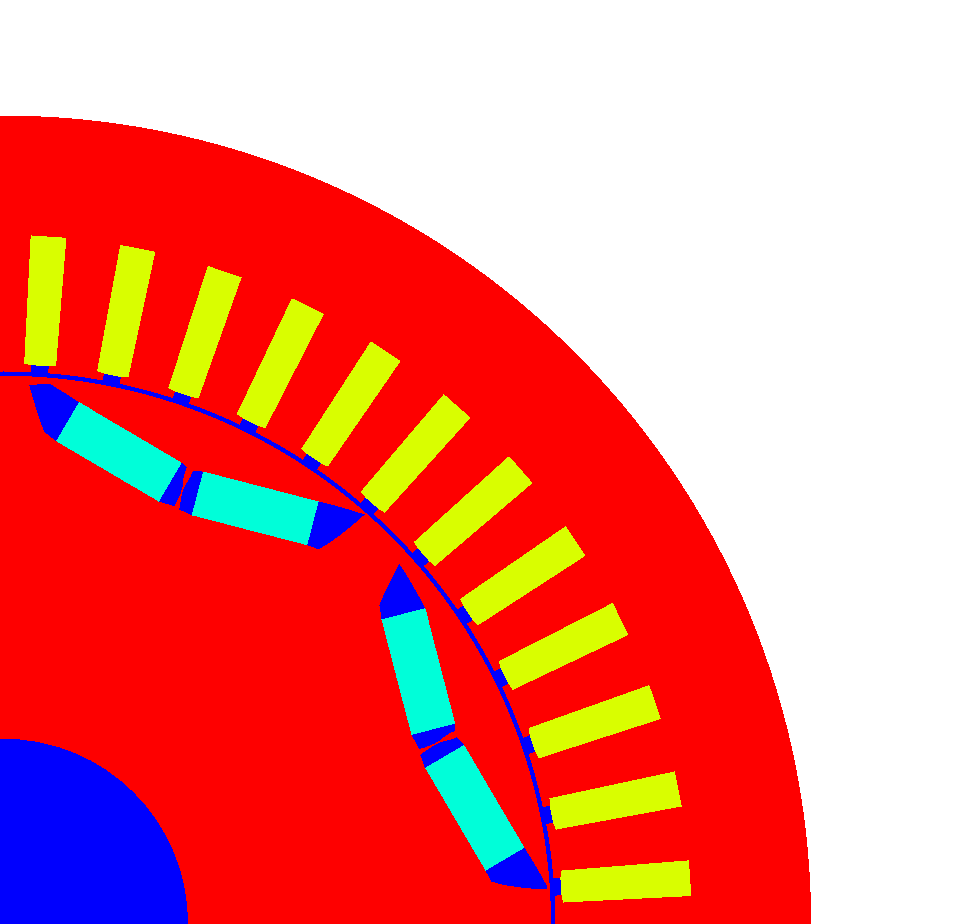}
\put(0,3){\fcolorbox{black}{white}{$b$}}
\end{overpic}
\end{minipage} & 
\begin{minipage}{0.33\textwidth}
\begin{overpic}[width=1.0\textwidth]{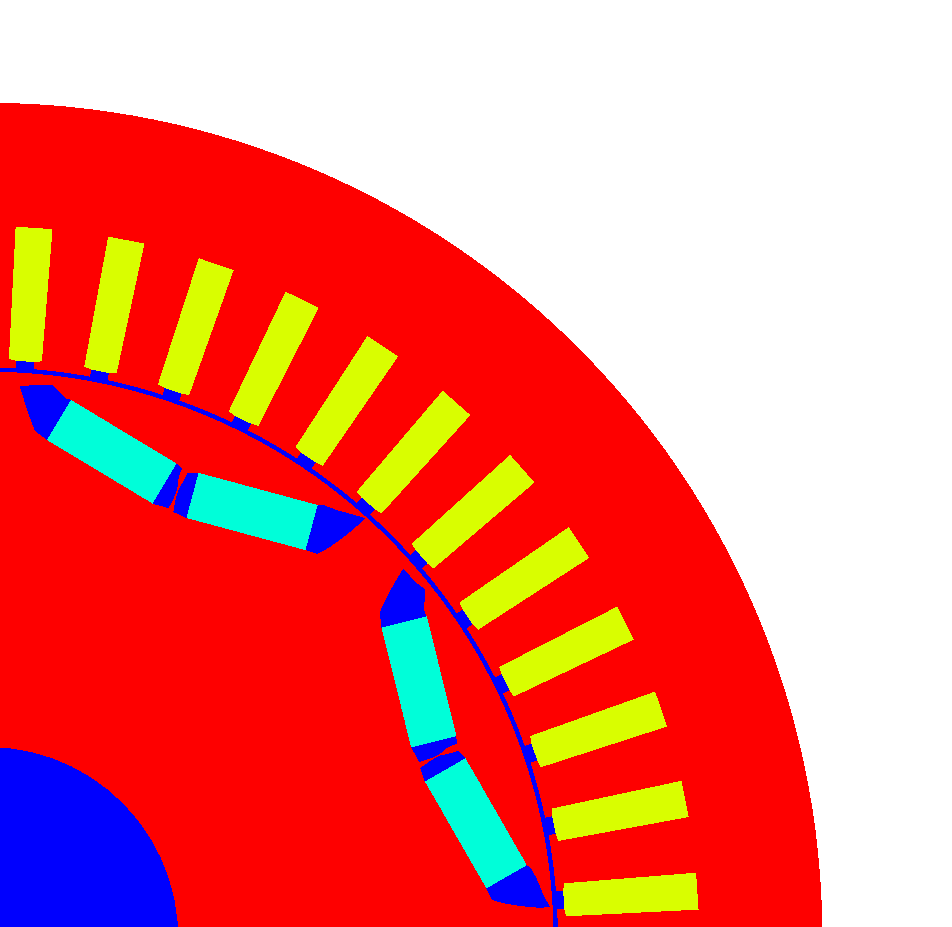}
\put(0,3){\fcolorbox{black}{white}{$c$}}
\end{overpic}
\end{minipage}
\end{tabular}
    \caption{\it Optimized design of the rotating machine considered in \cref{sec.exrot} at times (a) $t=0$; (b) $t = T/2$, and (c) $t=T$.}
\label{fig_designfinal}
\end{figure}

%%%%
\subsubsection{Description of the numerical results}
%%%%

\noindent The initial design $\Omega^0$ is sketched in \cref{fig_nu} (b), and the values of the potential $u_{\Omega^0}(t,\cdot)$ at times $t=0$, $t=T/2$ and $t=T$ are depicted in \cref{fig_stateinit} in three cross sections of the space-time mesh. The first descent direction $\theta^0 :D \to \R^d$ is shown in \cref{fig_shapegrad_init} (a) and the corresponding space-time vector field $(0, \Theta^0_x(t,x)-x)$ is represented at several times in \cref{fig_shapegrad_init} (b).

At each iteration of \cref{algo.strat}, a suitable descent step $\tau^n$ is calculated via a line search procedure 
in which $\tau^n$ is decreased by half as long as the scaled deformation $\tau^n \theta^n$ does not allow for a decrease of the objective function $J(\Omega)$.
This procedure is interrupted when $\tau^n$ becomes smaller than $10^{-10}$. 

The optimized design resulting from this procedure after 14 iterations is depicted in \cref{fig_designfinal} at the three different times $t=0$, $t=T/2$ and $t=T$; the torque $\Tor(u_\Omega)$ is increased from 522.94 N$\cdot$m to 587.79 N$\cdot$m in the process.
We note that, at the final iteration, the norm of the vector field $\theta^n$ does not vanish, but it no longer allows to decrease the value of $J(\Omega)$.  
One possible explanation for this fact is that the components of $\Omega_{\text{a}}$ made of the air pockets close to the magnet regions $\Dmag$ 
tend to get closer and eventually merge with the air gap region $\Dgap$, which is not allowed by the homogeneous Dirichlet boundary condition on $\partial \Dstat$ imposed in the identification problem \cref{eq.HilbertTrick} for $\theta$. In principle, this undesirable behavior could be prevented by 
adding a constraint about the mechanical stiffness of the device to the optimization problem \cref{eq.minmtorque}, 
as in e.g. \cite{brun2023magneto}. This issue will be addressed in a further work.

\begin{remark}
A careful look at \cref{fig_designfinal} shows that the rotated versions of the optimized design at the considered times $t^k$
do not have exactly the same look.
This numerical artifact is caused by the approximation of the extension of the deformation vector field $\theta$ into the space-time deformation mapping $\Theta$ in \cref{eq.stdef}, which is 
realized on an unstructured mesh of $Q$. 
\end{remark}

%%%%%%%%%%%%
\section{Conclusion and perspectives}\label{sec.concl}
%%%%%%%%%%%%

\noindent In this article, we have addressed a shape optimization problem related to the internal structure of an electric motor. 
This work departs from the related literature by the fact that the physical problem at stake is described by a version of the non linear magneto-quasi-static equation featuring a time evolving geometry -- a rich setting, which captures fine, realistic effects. 
After proving the well-posedness of the physical problem and calculating the shape derivative of a generic optimization criterion in this context, we have proposed a shape gradient workflow 
based on the space-time finite element method. We have validated our numerical strategy on an academic problem and tackled a more realistic example in the physical context of interest. 

The work in this article paves the way to multiple further investigations. On the one hand, the physical model, although already complicated enough, could be enriched to include multiphysics effects, 
for instance couplings between electromagnetic and thermal effects. 
This could require time horizons larger than one single rotation period, and could be addressed by combining the present space-time finite element method with time-stepping techniques. 
On a different note, constraints about the mechanical integrity and stability of the optimized design could be modeled with the help of structural mechanics, as suggested in \cite{wang2022topology}. From the numerical viewpoint, 
more robust methods could be implemented to deal with the update of the shape, such as the level set method \cite{allaire2004structural} or one of its avatars \cite{allaire2014shape,dapogny2022shape} retaining an exact, meshed description of the optimized phases. Eventually, optimal design strategies based on topological derivatives could also be considered in this context, see e.g. \cite{GanglKrenn2022} for a related investigation. \par
\bigskip

\paragraph{\textbf{Acknowledgments.}} The work of A. C. and P. G. is supported by the FWF funded project P32911 as well as the joint DFG/FWF Collaborative Research Centre CREATOR (CRC -- TRR361/F90) at TU Darmstadt, TU Graz, RICAM and JKU Linz.
This work was completed while C. D. was visiting the Laboratoire Jacques-Louis Lions from Universit\'e Paris-Sorbonne,
whose hospitality is thankfully acknowledged.

%%%%%%%%%%%%%%%%%%%%%%%%%%%%%%%
%%%%%%%%%%%%%%%%%%%%%%%%%%%%%%%
\appendix
\section{A few useful formulas in shape calculus}\label{app.tech}
%%%%%%%%%%%%%%%%%%%%%%%%%%%%%%%
%%%%%%%%%%%%%%%%%%%%%%%%%%%%%%%

\noindent The following integration by parts formula is a version of the usual Green's formula that is used repeatedly in the main part of the article. 

\begin{proposition}\label{lem.ippNtheta}
Let $\Omega \subset \R^d$ be a Lipschitz bounded domain. 
\begin{enumerate}[(i)]
\item Let $\theta \in \Winfty$ and $a : \R^d \to \R$ be a smooth scalar field. Then: 
$$ \int_\Omega \frac{\partial \theta_i}{\partial x_j} a \:\d x = \int_{\partial \Omega} \theta_i a n_j \:\d s - \int_\Omega \frac{\partial a}{\partial x_j} \theta_i \:\d x. $$
\item Let $\theta \in \Winfty$ and $a ,b : \R^d \to \R^d$ be smooth vector fields. 
The following integration by parts formula holds:
$$ \int_\Omega \nabla \theta a \cdot b \:\d x = \int_{\partial \Omega} (\theta \cdot b )(a \cdot n) \:\d s - \int_\Omega (\dv a )(\theta \cdot b) \:\d x - \int_\Omega \nabla b a \cdot \theta \:\d x $$
\end{enumerate} 
\end{proposition}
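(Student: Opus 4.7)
Both identities are consequences of the classical Green formula applied componentwise, together with the product rule. The plan is to first establish (i) as a direct application of standard integration by parts, then to derive (ii) by applying (i) with a well-chosen scalar integrand and simplifying the result via the Leibniz rule.

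For part (i), I would apply the product rule to rewrite the integrand as $\partial_j \theta_i \, a = \partial_j(\theta_i a) - \theta_i \partial_j a$. Integration over $\Omega$ and the divergence theorem applied to the vector field $\theta_i a \, e_j$ (whose divergence is $\partial_j(\theta_i a)$) then immediately yield the claimed identity. Since $\theta \in W^{1,\infty}$ and $a$ is smooth, there is no regularity issue: the product $\theta_i a$ is Lipschitz on $\overline\Omega$, so the Green formula applies on any Lipschitz domain $\Omega$, with the boundary integral making sense via the trace $\theta_i\lvert_{\partial\Omega}$.

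For part (ii), I would expand the integrand in coordinates as
\begin{equation*}
\nabla \theta \, a \cdot b = \sum_{i,j} \frac{\partial \theta_i}{\partial x_j}\, a_j b_i,
\end{equation*}
then apply (i) to each term with the scalar field $a_j b_i$ in place of $a$:
\begin{equation*}
\int_\Omega \frac{\partial \theta_i}{\partial x_j}\, a_j b_i \:\d x = \int_{\partial\Omega} \theta_i a_j b_i n_j \:\d s - \int_\Omega \frac{\partial (a_j b_i)}{\partial x_j} \theta_i \:\d x.
\end{equation*}
Summing over $i$ and $j$, the boundary contribution reassembles as $\int_{\partial\Omega} (\theta\cdot b)(a\cdot n)\:\d s$. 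For the volume remainder, the Leibniz rule gives $\partial_j(a_j b_i) = (\partial_j a_j) b_i + a_j \partial_j b_i$, so that summing over the indices produces $(\dv a)(\theta\cdot b) + (\nabla b\, a)\cdot \theta$, which yields the announced formula.

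I do not expect any genuine obstacle: the only care required is in bookkeeping the indices and in ensuring that the product rule is applied to the correct variable of differentiation (here $x_j$, matching the second index of $\nabla\theta$). The statement as worded is a pure notational rearrangement of Green's formula, and under the stated regularity ($\theta$ Lipschitz, $a$, $b$ smooth, $\Omega$ Lipschitz) the classical divergence theorem suffices throughout, with no density argument needed.
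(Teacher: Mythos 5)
Your proof is correct. The paper states this proposition without proof, as a recalled version of the classical Green formula; your componentwise argument — part (i) by the product rule and the divergence theorem applied to $\theta_i a\, e_j$, part (ii) by expanding $\nabla\theta\, a\cdot b = \sum_{i,j}\partial_j\theta_i\, a_j b_i$ and applying (i) with the scalar field $a_j b_i$ — is exactly the standard derivation the authors implicitly rely on, and the index bookkeeping checks out.
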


Let us also recall the following relation between the normal and tangent vectors to the boundary of a domain and those attached to a deformed configuration. 

\begin{lemma}\label{lem.nortan}
Let $\Omega \subset \R^d$ be a bounded Lipschitz domain, and let $\varphi : \R^d \to \R^d$ be a Lipschitz continuous homeomorphism with Lipschitz inverse. 
Then, for any tangential vector field $\tau : \partial \Omega \to \R^d$, the vector field $\nabla \varphi(\varphi^{-1}(y)) \tau(\varphi^{-1}(y))$ 
is tangential to $\partial (\varphi(\Omega))$. 
Besides, the unit normal vector fields $n_\Omega$ and $n_{\varphi(\Omega)}$ are related by:
$$ \forall x \in \Omega, \quad n_{\varphi(\Omega)}(\varphi(x)) = \frac{1}{|\com(\nabla \varphi(x)) n_\Omega(x)|} \com(\nabla \varphi(x)) n_\Omega(x),$$
where $\com(M) \in \R^{d\times d}$ is the cofactor matrix of a $d \times d$ matrix $M$.
\end{lemma}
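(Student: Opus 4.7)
The plan is to establish both assertions via elementary differential-geometric considerations, relying on the classical cofactor identity
$$\com(M) = \det(M) \, M^{-T}, \quad M \in \R^{d\times d} \text{ invertible}.$$

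First, I would prove the tangentiality statement by a parametrization argument. Given $x \in \partial \Omega$ and a tangential vector $\tau = \tau(x)$ at $x$, I would consider a Lipschitz curve $\gamma : (-\varepsilon, \varepsilon) \to \partial \Omega$ with $\gamma(0) = x$ and $\gamma'(0) = \tau$; its image $\varphi \circ \gamma$ lies on $\partial(\varphi(\Omega))$ by the homeomorphism property, passes through $\varphi(x)$ at $t=0$, and, by the chain rule, has velocity $\nabla \varphi(x)\, \tau$ there. Setting $y = \varphi(x)$ yields the announced tangentiality of $\nabla \varphi(\varphi^{-1}(y))\tau(\varphi^{-1}(y))$ to $\partial(\varphi(\Omega))$.

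Next, for the normal-vector formula, I would set $N(x) := \com(\nabla \varphi(x)) n_\Omega(x) = \det(\nabla \varphi(x)) [\nabla \varphi(x)]^{-T} n_\Omega(x)$. By the previous step, every tangent vector of $\partial (\varphi(\Omega))$ at $\varphi(x)$ is of the form $\nabla \varphi(x) \tau$ for some $\tau$ tangential to $\partial \Omega$ at $x$. A direct computation then gives
$$\nabla \varphi(x) \tau \cdot N(x) \;=\; \det(\nabla \varphi(x)) \; \nabla \varphi(x) \tau \cdot [\nabla \varphi(x)]^{-T} n_\Omega(x) \;=\; \det(\nabla \varphi(x)) \; \tau \cdot n_\Omega(x) \;=\; 0,$$
so that $N(x)$ is orthogonal to the tangent space of $\partial(\varphi(\Omega))$ at $\varphi(x)$ and is therefore proportional to $n_{\varphi(\Omega)}(\varphi(x))$. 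The normalization by $|N(x)|$ is forced by the unit-length requirement, and the sign is pinned down by ensuring that $N(x)$ points outward $\varphi(\Omega)$ when $\det \nabla \varphi(x) > 0$: this can be verified by a local straightening of the boundary, or more simply by a continuity argument based on the fact that when $\varphi = \Id$ the formula collapses to $n_{\varphi(\Omega)}(\varphi(x)) = n_\Omega(x)$, and the orientation is preserved under continuous deformations of $\varphi$ as long as $\det \nabla \varphi$ does not vanish.

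The main obstacle is the low regularity of $\varphi$: being merely Lipschitz, its gradient $\nabla \varphi$ exists only almost everywhere by Rademacher's theorem, and its trace on $\partial \Omega$ is not immediately meaningful for the pointwise identity as stated. A clean fix is either to strengthen the regularity of $\varphi$ (e.g. $\varphi \in W^{2,\infty}$ would make the above arguments rigorous for $\mathcal{H}^{d-1}$-a.e. $x \in \partial \Omega$), or to interpret the identity in the almost-everywhere sense by leveraging the a.e. existence of approximate tangent planes to a Lipschitz hypersurface. Since $\varphi$ is smooth in all applications of \cref{lem.nortan} within this article, this subtlety does not impact its downstream use.
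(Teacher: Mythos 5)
Your proof is correct in substance; note that the paper itself offers no proof of \cref{lem.nortan} --- it is stated in Appendix A as one of the ``classical'' technical results that are merely recalled, so there is nothing to compare against line by line. Your route (push tangent vectors forward by $\nabla\varphi$ via a curve argument, then observe that $N(x)=\com(\nabla\varphi(x))n_\Omega(x)=\det(\nabla\varphi(x))\,[\nabla\varphi(x)]^{-T}n_\Omega(x)$ annihilates the pushed-forward tangent space because $\nabla\varphi(x)\tau\cdot[\nabla\varphi(x)]^{-T}n_\Omega(x)=\tau\cdot n_\Omega(x)=0$) is the standard one, and your remark that the pointwise use of $\nabla\varphi$ on $\partial\Omega$ is not literally meaningful for a merely Lipschitz $\varphi$ is a legitimate observation about the lemma's hypotheses rather than a gap in your argument. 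One point worth tightening: the stated formula is only correct for orientation-preserving $\varphi$. Indeed, taking $w$ with $w\cdot n_\Omega(x)>0$, the vector $\nabla\varphi(x)w$ points outward of $\varphi(\Omega)$ at $\varphi(x)$, and $\com(\nabla\varphi(x))n_\Omega(x)\cdot\nabla\varphi(x)w=\det(\nabla\varphi(x))\,(n_\Omega(x)\cdot w)$, which is positive precisely when $\det\nabla\varphi(x)>0$; this one-line computation pins down the sign more directly than your homotopy-to-the-identity argument and makes explicit that the hypothesis $\det\nabla\varphi>0$ is needed (it holds in all of the paper's applications, where $\varphi_t$ is a rotation).
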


%%%%%%%%%%%%%%%%%%%%%%%%%%%%%%%
%%%%%%%%%%%%%%%%%%%%%%%%%%%%%%%
\section{Main results about space-time variational problems}\label{app.varft}
%%%%%%%%%%%%%%%%%%%%%%%%%%%%%%%
%%%%%%%%%%%%%%%%%%%%%%%%%%%%%%%

\noindent This appendix is devoted to the proof of \cref{th.wellposed}, which states the well-posedness of the version \cref{eq.uOm} of the non linear magneto-quasi-static equation
featuring coefficients depending on the rotating distribution of materials within the domain $D$. 
Although the proof hinges on quite classical methods for the analysis of non linear evolution equations and mixed parabolic-elliptic equations, the exact setting considered in this article, featuring a time-dependent geometry, is new to the best of our knowledge (see however \cite{bachinger2005numerical,kolmbauer2012multiharmonic} for related investigations). 

After recalling a few facts about time-space functional spaces in \cref{app.vdistrib} and providing the statement of some needed results about the well-posedness of non linear boundary value and evolution problem in \cref{app.prelNL}, 
we detail the proof of \cref{th.wellposed}, properly speaking, in \cref{app.proofThmain}.

%%%%
\subsection{A few words about time-space functional spaces}\label{app.vdistrib}
%%%%

\noindent This section gathers a few definitions and basic facts about the time-space functional spaces which are naturally involved in the mathematical formulation of evolution problems. \par\medskip

Let $V \subset H \subset V^*$ be an evolution triple, that is: 
\begin{itemize}
\item $(V,\langle \cdot , \cdot \rangle_V)$ and $(H,\langle \cdot , \cdot \rangle_H)$ are real and separable Hilbert spaces, and $V^*$ is the topological dual of $V$; 
we denote by $\langle \cdot , \cdot \rangle_{V^*,V}$ the usual duality pairing between $V^*$ and $V$.
\item The inclusion $V \subset H$ is continuous and dense.
\end{itemize}
Given a fixed time interval $[0,T]$, we define the space $L^2(0,T; V)$ by:
\begin{equation}\label{eq.L20TV}
 L^2(0,T;V) = \left\{ u : [0,T] \to V, \:\: \int_0^T \lvert\lvert u(t) \lvert\lvert_V^2 \:\d t < \infty \right\}.
 \end{equation}
The latter is a Hilbert space when equipped with the natural inner product
$$ \langle u(t), v(t) \rangle_{ L^2(0,T;V) } := \int_0^T \langle u(t), v(t) \rangle_V \:\d t.$$
Its topological dual $L^2(0,T;V)^*$ is naturally identified to $L^2(0,T;V^*)$ via the pairing:
\begin{equation}\label{eq.iddualstspace}
 \langle f , u \rangle_{L^2(0,T;V^*),L^2(0,T;V)} = \int_0^T \langle f (t) , u(t) \rangle_{V^*,V} \:\d t.
\end{equation}

\begin{definition}
Let $V \subset H \subset V^*$ be an evolution triple, and let $Z$ be another Hilbert space. 
A function $u \in L^2(0,T;V)$ has a time derivative in the space $L^2(0,T; Z)$ if there exists $z \in L^2(0,T; Z)$ such that, for all real-valued test functions $\varphi \in \calC^\infty_c(0,T)$, it holds:
$$ \int_0^T u(t) \varphi^\prime(t) \:\d t = -\int_0^T z(t) \varphi(t) \:\d t.$$
This derivative is then denoted by $\frac{\partial u}{\partial t} = z$. 
\end{definition}

Let $V \subset H \subset V^*$ be an evolution triple. The time-dependent problems considered in this article bring into play Hilbert spaces of the form
\begin{equation}\label{eq.WOTVH}
W(0,T; V, H) = \Big\{ u \in L^2(0,T; V) \text{ s.t. } \frac{\partial u}{\partial t} \in L^2(0,T;V^*) \Big\},
\end{equation}
equipped with the natural norm
$$ || u ||_{W(0,T; V, H)}^2 := \int_0^T \lvert\lvert u(t) \lvert\lvert^2_{V}\:\d t + \int_0^T \left \lvert \left\lvert  \frac{\partial u}{\partial t}(t)\right\lvert\right\lvert^2_{V^*}  \:\d t.$$
We recall the following facts about such functional spaces, see Lemmas 1 and 2  of Chap XVIII, \S 1.2 of \cite{dautray1992evolution}.
\begin{lemma}\label{lem.densW}\noindent
\begin{enumerate}[(i)]
\item The subspace 
$$ \left\{ u(t) = \sum\limits_{k=0}^K t^k u_k , \:\: K \geq 0, \:\: u_k \in V \right\} \subset W(0,T;V,H)$$
of polynomial functions in the time variable with coefficients in $V$ is dense in $W(0,T; V,H)$.
In particular, the space $\calC^\infty([0,T],V)$ of smooth $V$-valued functions on $[0,T]$ is dense in $W(0,T; V,H)$.
\item The space $W(0,T; V, H)$ is continuously embedded in the space $\calC([0,T]; H)$ of continuous $H$-valued functions on $[0,T]$.
\end{enumerate}
\end{lemma}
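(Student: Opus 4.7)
The plan is to prove (i) first and then to deduce (ii) from an energy identity established on the dense subspace of polynomial functions provided by (i).

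For the density statement in (i), I would follow the classical two-step strategy of mollification followed by Weierstrass approximation. First, I would establish that $\calC^\infty([0,T];V)$ is dense in $W(0,T;V,H)$. Given $u \in W(0,T;V,H)$, one extends it to a function $\tilde u$ defined on $\R$, still belonging to $L^2(\R;V)$ and with $\frac{\partial \tilde u}{\partial t} \in L^2(\R;V^*)$, by localizing via a smooth partition of unity around each endpoint and reflecting evenly across $0$ and $T$ (any extension of the factors supported near the endpoints works as long as it preserves $L^2(V)$ and avoids introducing a singular contribution to the derivative). Temporal convolution $u_\e := \tilde u * \rho_\e$ with a standard mollifier then yields $\calC^\infty(\R;V)$ functions whose restrictions to $[0,T]$ converge to $u$ in $W(0,T;V,H)$, using that convolution commutes with the weak derivative and the usual $L^2$ approximation properties. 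Second, I would approximate any $v \in \calC^\infty([0,T];V)$ by its Bernstein polynomials
$$ B_n v(t) = \sum_{k=0}^n \binom{n}{k} \left(\frac{t}{T}\right)^k \left(1 - \frac{t}{T}\right)^{n-k} v\!\left(\frac{kT}{n}\right), $$
which are polynomials in $t$ with coefficients in $V$. The classical Bernstein theorem extends verbatim to Banach-space-valued $\calC^1$ functions, so $B_n v \to v$ and $(B_n v)' \to v'$ uniformly in $V$, hence in $L^2(0,T;V)$ and $L^2(0,T;V^*)$; this yields the density of polynomials in $W(0,T;V,H)$.

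For (ii), I would first observe that for polynomial (hence $\calC^1$) $V$-valued functions $u$, the chain rule gives
$$ \frac{\d}{\d t} \lvert\lvert u(t) \lvert\lvert_H^2 = 2 \left\langle \frac{\partial u}{\partial t}(t), u(t) \right\rangle_{V^*,V}, $$
so that for any $s, t \in [0,T]$,
$$ \lvert\lvert u(t) \lvert\lvert_H^2 = \lvert\lvert u(s) \lvert\lvert_H^2 + 2 \int_s^t \left\langle \frac{\partial u}{\partial t}(r), u(r) \right\rangle_{V^*,V} \:\d r. $$
Averaging this identity over $s \in (0,T)$ and bounding the right-hand side via the continuous embedding $V \hookrightarrow H$ together with the Cauchy--Schwarz inequality, I obtain an estimate of the form
$$ \sup_{t \in [0,T]} \lvert\lvert u(t) \lvert\lvert_H \leq C \lvert\lvert u \lvert\lvert_{W(0,T;V,H)}, $$
where $C$ depends only on $T$ and the norm of the embedding $V \hookrightarrow H$. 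Since $t \mapsto u(t) \in H$ is trivially continuous when $u$ is polynomial, the identity map defines a bounded linear operator from the polynomial subspace of $W(0,T;V,H)$ into $\calC([0,T];H)$. By the density statement (i), this operator extends uniquely to a continuous linear injection $W(0,T;V,H) \hookrightarrow \calC([0,T];H)$, which is the desired embedding; the value $u(t) \in H$ at each $t$ is recovered as the $H$-limit of $u_n(t)$ for any approximating polynomial sequence $u_n$.

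The main technical obstacle lies in the extension step of (i): one must ensure that, after extending $u$ past the endpoints $0$ and $T$, the resulting function still has its weak time derivative in $L^2(\R;V^*)$ with no delta-type contribution at $t=0$ or $t=T$. A clean way to handle this is to localize via a smooth partition of unity subordinate to an open cover of $[0,T]$ and extend each piece by a method appropriate to the location of its support -- interior pieces being trivial and endpoint pieces being handled by even reflection combined with a cutoff vanishing past the reflected endpoint. This is a classical argument in the theory of Sobolev spaces on intervals, but it must be checked carefully in the vector-valued setting where only the pairing $\langle \cdot, \cdot \rangle_{V^*,V}$, and not an inner product on $H$, is available on the time derivative.
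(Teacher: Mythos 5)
Your proof is correct and is essentially the classical argument: the paper does not prove this lemma at all but simply cites Lemmas 1 and 2 of Chap.~XVIII, \S 1.2 of Dautray--Lions, and the proof given there is precisely your route (extension by reflection and mollification, then polynomial approximation for (i); the energy identity $\frac{\d}{\d t}\lvert\lvert u \lvert\lvert_H^2 = 2\langle u', u\rangle_{V^*,V}$ on the dense subspace plus the averaging trick for (ii)). The only point worth making explicit is that the reflection step is not circular: one does not need continuity of $u$ with values in $H$ to rule out a singular contribution at the endpoints, since $u\in L^2(0,T;V)$ with $u'\in L^2(0,T;V^*)$ already gives $u\in \calC([0,T];V^*)$, and continuity in $V^*$ across the reflection point suffices.
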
 
The last point of this lemma is of crucial importance in practice, as it gives a rigorous meaning to the time (e.g. initial, periodic) conditions usually imposed to the solution of an evolution problem when it is sought in $W(0,T;V,H)$.

%%%%%
\subsection{Preliminary results about the well-posedness of some non linear problems}\label{app.prelNL}
%%%%%

\noindent 
Let us first recall the following consequence of the Banach fixed point theorem, about the solution to a non linear stationary equation in a Hilbert space, whose operator satisfies suitable monotonicity and continuity conditions,
see Th. 25 B. from \cite{zeidler2013nonlinear}.

\begin{theorem}\label{th.wpnl}
Let $(V, \lvert\lvert\cdot \lvert\lvert_V)$ be a real Hilbert space and let $\calA: V \to V^*$ be a (possibly non linear) mapping satisfying the following properties:
\begin{enumerate}[(i)]
\item $\calA$ is strongly monotone, i.e. there exists $\alpha >0$ such that: 
$$ \forall u,v \in V, \quad \langle \calA u - \calA v , u -v \rangle_{V^*,V} \: \geq \: \alpha \lvert\lvert u - v \lvert\lvert_V ^2.$$
\item $\calA$ is Lipschitz continuous, i.e. there exists $L >0$ such that:
$$ \forall u , v \in V, \quad \lvert\lvert \calA u - \calA v \lvert\lvert_{V^*} \: \leq \: L \lvert\lvert u - v \lvert\lvert_V .$$
\end{enumerate}
Then, for each $b \in V^*$, the equation 
\begin{equation}\label{eq.NLabs}
 \calA u = b
 \end{equation}
has a unique solution $u \in V$. This solution has a Lipschitz continuous dependence with respect to the data: there exists a constant $C >0$ depending only on $\alpha$ and $L$ such that, if $u_1$, $u_2$ are the solutions to \cref{eq.NLabs} with respective right-hand sides $b_1$, $b_2 \in V^*$, it holds:
$$ \lvert\lvert u_1 - u_2 \lvert\lvert_V \: \leq\: C \lvert\lvert b_1 - b_2 \lvert\lvert_{V^*}.$$
\end{theorem}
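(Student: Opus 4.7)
The plan is to cast the equation $\calA u = b$ as a fixed point problem and to invoke the Banach fixed point theorem. Using the Riesz isomorphism $J: V^* \to V$, which is an isometry satisfying $\langle f, v\rangle_{V^*,V} = \langle Jf, v\rangle_V$ for all $f \in V^*$, $v \in V$, I define, for a parameter $\rho > 0$ to be chosen later, the mapping
\begin{equation*}
T : V \to V, \qquad T(u) := u - \rho\, J(\calA u - b).
\end{equation*}
By construction, $u \in V$ is a fixed point of $T$ if and only if it solves $\calA u = b$.

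The key step is to verify that $T$ is a strict contraction for well-chosen $\rho$. For $u, v \in V$, expanding the squared $V$-norm of $T(u) - T(v)$ gives
\begin{align*}
\lvert\lvert T(u) - T(v) \lvert\lvert_V^2
&= \lvert\lvert u - v \lvert\lvert_V^2 - 2\rho\, \langle J(\calA u - \calA v), u - v\rangle_V + \rho^2\, \lvert\lvert J(\calA u - \calA v)\lvert\lvert_V^2 \\
&= \lvert\lvert u - v \lvert\lvert_V^2 - 2\rho\, \langle \calA u - \calA v, u - v\rangle_{V^*,V} + \rho^2\, \lvert\lvert \calA u - \calA v\lvert\lvert_{V^*}^2,
\end{align*}
where the last line follows from the defining property of $J$. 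The strong monotonicity (i) and the Lipschitz continuity (ii) of $\calA$ then yield
\begin{equation*}
\lvert\lvert T(u) - T(v) \lvert\lvert_V^2 \leq \left(1 - 2\rho \alpha + \rho^2 L^2 \right) \lvert\lvert u - v \lvert\lvert_V^2.
\end{equation*}
Choosing any $\rho \in (0, 2\alpha/L^2)$ makes the factor $k := 1 - 2\rho\alpha + \rho^2 L^2$ strictly less than $1$, so that $T$ is a contraction on the complete metric space $V$. The Banach fixed point theorem then provides a unique fixed point $u \in V$, which is the desired unique solution to $\calA u = b$.

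The Lipschitz dependence on the data is a direct consequence of strong monotonicity alone and does not require the fixed point argument. If $u_1, u_2 \in V$ solve $\calA u_i = b_i$, then
\begin{equation*}
\alpha\, \lvert\lvert u_1 - u_2 \lvert\lvert_V^2 \leq \langle \calA u_1 - \calA u_2, u_1 - u_2\rangle_{V^*,V} = \langle b_1 - b_2, u_1 - u_2\rangle_{V^*,V} \leq \lvert\lvert b_1 - b_2\lvert\lvert_{V^*} \lvert\lvert u_1 - u_2\lvert\lvert_V,
\end{equation*}
so $\lvert\lvert u_1 - u_2 \lvert\lvert_V \leq \alpha^{-1} \lvert\lvert b_1 - b_2\lvert\lvert_{V^*}$, with the constant $C = 1/\alpha$ depending only on $\alpha$. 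I do not expect any serious obstacle: the only subtle point is getting the right scaling of $\rho$ in the contraction estimate, which is exactly the classical trade-off between the monotonicity constant $\alpha$ and the Lipschitz constant $L$, with optimal choice $\rho = \alpha/L^2$ yielding contraction factor $k = 1 - \alpha^2/L^2$.
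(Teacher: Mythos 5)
Your proof is correct: the paper does not prove this statement itself but cites it as Theorem 25.B of Zeidler's book, and your argument (the contraction mapping $T(u) = u - \rho J(\calA u - b)$ built from the Riesz isomorphism, with $\rho \in (0, 2\alpha/L^2)$, plus the direct strong-monotonicity estimate for the Lipschitz dependence) is precisely the classical proof given there. Nothing is missing.
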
 

In particular, the above result is the pivotal ingredient in the proof of the following lemma about the well-posedness of a stationary version of the considered evolution problem \cref{eq.uOm}, see \cite{pechstein2006monotonicity} for a similar argument: 
\begin{lemma}\label{lem.wpbvpO}
Let $\calO$ be a bounded Lipschitz domain in $\R^d$, let $\hat\nu : \R_+ \to \R_+$ be a function satisfying \cref{eq.hypnu}, and let $B \in \calC^\infty(\overline{\calO}; \R^{d\times d})$ be a matrix-valued function such that $B(x)$ is invertible for all $x \in \overline{\calO}$. Then for all data $f \in L^2(\calO)$ and $g \in H^{1/2}(\partial \calO)$, the boundary value problem
\begin{equation}\label{eq.nlbvpcalO}
\left\{\begin{array}{cl}
-\dv (\hat \nu( \lvert B^T(x) \nabla u\lvert) B(x)B^T(x)\nabla u) = f & \text{in } \calO, \\
u = g & \text{on } \partial \calO,
\end{array} 
\right.
\end{equation}
has a unique solution $u \in H^1(\calO)$ which is Lipschitz continuous with respect to the data: the solutions $u_1$, $u_2 \in H^1(\calO)$ associated to different data $(f_1,g_1)$, $(f_2,g_2) \in L^2(\calO) \times H^{1/2}(\partial \calO)$ satisfy the following inequality: 
$$ \lvert\lvert u_1 - u_2 \lvert\lvert_{H^1(\calO)} \leq C \Big( \lvert\lvert f_1 - f_2 \lvert\lvert_{L^2(\calO)}  +  \lvert\lvert g_1 - g_2 \lvert\lvert_{H^{1/2}(\partial\calO)} \Big),$$
for a constant $C$ which depends only on the domain $\calO$, the matrix $B$ and the coefficients $\underline\nu$, $\overline\nu$ in \cref{eq.hypnu}. 
\end{lemma}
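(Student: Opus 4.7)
The plan is to reduce \cref{eq.nlbvpcalO} to an abstract nonlinear operator equation on $H^1_0(\calO)$ and then invoke \cref{th.wpnl}. First I would lift the Dirichlet boundary data by choosing $\tilde g \in H^1(\calO)$ with $\tilde g_{|\partial \calO} = g$ and $\|\tilde g\|_{H^1(\calO)} \leq C_\calO \|g\|_{H^{1/2}(\partial \calO)}$, and write $u = v + \tilde g$ with $v \in H^1_0(\calO)$; the problem \cref{eq.nlbvpcalO} then translates into finding $v$ such that for every $w \in H^1_0(\calO)$,
\begin{equation*}
\int_\calO \hat\nu(|B^T\nabla(v+\tilde g)|) \, B(x)B^T(x)\nabla(v+\tilde g) \cdot \nabla w \, \d x = \int_\calO f w \, \d x.
\end{equation*}
Encoding the left-hand side as an operator $\calA : H^1_0(\calO) \to H^{-1}(\calO)$, the variational problem reads $\calA v = \tilde f$, where $\tilde f \in H^{-1}(\calO)$ depends linearly and continuously on $(f,g)$.

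The next step is to verify the hypotheses of \cref{th.wpnl} for $\calA$. The pivotal fact, classical and proved in e.g. \cite{pechstein2006monotonicity}, is that the scalar assumption \cref{eq.hypnu} propagates to the vector-valued map $\R^d \ni \xi \mapsto \hat\nu(|\xi|)\xi$: there exist constants $0 < \alpha \leq L$ depending only on $\underline\nu, \overline\nu$ with
\begin{equation*}
(\hat\nu(|\xi_1|)\xi_1 - \hat\nu(|\xi_2|)\xi_2)\cdot (\xi_1 - \xi_2) \geq \alpha |\xi_1-\xi_2|^2, \qquad |\hat\nu(|\xi_1|)\xi_1 - \hat\nu(|\xi_2|)\xi_2| \leq L|\xi_1-\xi_2|,
\end{equation*}
for all $\xi_1,\xi_2 \in \R^d$. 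Substituting $\xi_i = B^T(x)\nabla u_i(x)$, multiplying by $B(x)$ before testing against $\nabla(u_1-u_2)$, integrating over $\calO$, and using the uniform invertibility of $B$ on the compact set $\overline{\calO}$ (which yields $c_1 |\eta| \leq |B^T(x)\eta| \leq c_2 |\eta|$ for some $0 < c_1 \leq c_2$ independent of $x$), together with the Poincar\'e inequality in $H^1_0(\calO)$, will deliver the strong monotonicity and Lipschitz continuity of $\calA$ with constants depending only on $\calO$, $B$, $\underline\nu$ and $\overline\nu$.

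\cref{th.wpnl} will then provide a unique $v \in H^1_0(\calO)$, hence a unique $u = v + \tilde g \in H^1(\calO)$ solving \cref{eq.nlbvpcalO}, and a Lipschitz bound in terms of $\tilde f \in H^{-1}(\calO)$. Continuity of the lift $g \mapsto \tilde g$ together with the embedding $L^2(\calO) \hookrightarrow H^{-1}(\calO)$ will then translate this estimate into the advertised stability inequality with respect to $(f,g)$. The only real obstacle I anticipate is the vector-valued upgrade of the scalar bounds \cref{eq.hypnu}: this is more delicate than it looks, since $\xi \mapsto \hat\nu(|\xi|)\xi$ is not manifestly a potential map in dimension $d \geq 2$, and the standard route proceeds either via a direct parametrization $\xi_i \rightsquigarrow \xi_2 + t(\xi_1-\xi_2)$ combined with careful manipulation of the Jacobian, or via a clever decomposition into radial and tangential components. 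Once this pointwise inequality is granted, everything else reduces to routine functional-analytic bookkeeping.
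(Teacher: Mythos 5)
Your proposal is correct and follows essentially the same route as the paper: lift the boundary data, recast the problem as an operator equation on $H^1_0(\calO)$, verify strong monotonicity and Lipschitz continuity using the vector-valued consequence of \cref{eq.hypnu} together with the uniform positivity of $B(x)B^T(x)$ on $\overline{\calO}$, and conclude with \cref{th.wpnl}. The one step you defer to the literature --- upgrading the scalar bounds to the map $\xi \mapsto \hat\nu(\lvert\xi\rvert)\xi$ --- is precisely the computation the paper performs inline, via the decomposition $(\hat\nu(\lvert U_1\rvert)U_1-\hat\nu(\lvert U_2\rvert)U_2)\cdot(U_1-U_2) \geq \underline\nu\lvert U_1-U_2\rvert^2 + (\hat\nu(\lvert U_1\rvert)\lvert U_1\rvert-\hat\nu(\lvert U_2\rvert)\lvert U_2\rvert)(\lvert U_1\rvert-\lvert U_2\rvert) - \underline\nu(\lvert U_1\rvert-\lvert U_2\rvert)^2$ followed by the Cauchy--Schwarz inequality, i.e. the ``radial component'' route you anticipate.
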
 
\begin{proof}
Let $\widetilde g \in H^1(\calO)$ be such that $\widetilde g \lvert_{\partial \calO} = g$. 
Letting the change of unknown functions $w = u - \widetilde g \in H^1_0(\calO)$, the considered boundary value problem \cref{eq.nlbvpcalO} rewrites:
\begin{equation}\label{eq.nlbvpcalO2}
\left\{\begin{array}{cl}
-\dv (\hat\nu(\lvert B^T(x)(\nabla w + \nabla \widetilde g)\lvert) B(x)B^T(x) (\nabla w + \nabla \widetilde g) ) = f & \text{in } \calO, \\
w = 0 & \text{on } \partial \calO.
\end{array} 
\right.
\end{equation}
Let us equip $H^1_0(\calO)$ with the inner product
$$ \langle u , v \rangle_{H^1_0(\calO)} := \int_\calO \nabla u \cdot \nabla v \:\d x,$$
and let us introduce the operator $\calA : H^1_0(\calO) \to H^{-1}(\calO)$ defined by:
$$\calA w  = -\dv (\hat\nu(\lvert B^T(x)(\nabla w + \nabla \widetilde g) \lvert) B(x)B^T(x) (\nabla w + \nabla \widetilde g)). $$
We aim to show that $\calA$ is strongly monotone; to this end, let  $u_1,u_2 \in H^1_0(\calO)$ be arbitrary functions; introducing the shorthand $U_i = B^T(x)(\nabla u_i + \nabla \widetilde g) \in L^2(\calO)^d$,  $i=1,2$, an elementary calculation yields:
\begin{multline*} 
\left\langle \calA u_1 - \calA u_2 , u_1 - u_2 \right\rangle_{H^{-1}(\calO),H^1_0(\calO)} \\
\begin{array}{>{\displaystyle}cc>{\displaystyle}l}
&\hspace{-4.5cm} =&\hspace{-2cm}  \int_\calO  \Big( \hat\nu( \lvert U_1 \lvert) U_1 - \hat\nu(\lvert U_2 \lvert) U_2 \Big) \cdot (U_1 - U_2) \:\d x \\[1em]
&\hspace{-4.5cm} = & \hspace{-2cm} \underline\nu \int_\calO  \lvert U_1 - U_2 \lvert^2 \:\d x +  \int_\calO \Big( (\hat\nu( \lvert U_1 \lvert) -\underline\nu) U_1 -(\hat \nu(\lvert U_2 \lvert)-\underline\nu) U_2 \Big) \cdot (U_1 - U_2) \:\d x \\[1em]
&\hspace{-4.5cm}=&  \hspace{-2cm}  \underline\nu \int_\calO  \lvert U_1 - U_2 \lvert^2 \:\d x \\ [1em]
&\hspace{-4.5cm}&  \hspace{-2cm} +  \int_\calO \Big( (\hat\nu( \lvert U_1 \lvert) - \underline\nu) \lvert U_1 \lvert^2 + (\hat\nu(\lvert U_2 \lvert) - \underline\nu)\lvert U_2\lvert^2 - (\hat\nu(\lvert U_1 \lvert) + \hat\nu(\lvert U_2 \lvert) -2\underline\nu) U_1 \cdot U_2 \Big)  \:\d x \\[1em]
  &\hspace{-4.5cm} \geq& \hspace{-2cm}  \underline\nu \int_\calO  \lvert U_1 - U_2 \lvert^2 \:\d x \\ [1em]
  & \hspace{-4.5cm} &  \hspace{-2cm}  + \int_\calO  \Big( ( \hat\nu( \lvert U_1 \lvert) -\underline\nu) \lvert U_1 \lvert^2 + (\hat\nu(\lvert U_2 \lvert)-\underline\nu) \lvert U_2\lvert^2 - (\hat\nu(\lvert U_1 \lvert) + \hat\nu(\lvert U_2 \lvert) -2\underline\nu ) \lvert U_1 \lvert \lvert U_2 \lvert \Big)  \:\d x \\[1em]
    & \hspace{-4.5cm} =&  \hspace{-2cm} \underline\nu \int_\calO  \lvert U_1 - U_2 \lvert^2 \:\d x +  \int_\calO \Big( \hat\nu( \lvert U_1 \lvert) \lvert U_1 \lvert - \hat\nu(\lvert U_2 \lvert) \lvert U_2\lvert \Big)  (\lvert U_1 \lvert -\lvert U_2\lvert) \:\d x  \\[1em]
    & \hspace{-4.5cm} &\hspace{-2cm}  - \underline\nu  \int_{\calO }\lvert \lvert U_1\lvert - \lvert U_2 \lvert \lvert^2 \:\d x\\[1em]
    & \hspace{-4.5cm} \geq& \hspace{-2cm} \underline\nu \: \lvert\lvert U_1 - U_2 \lvert\lvert^2_{L^2(\calO)^d},
 \end{array}  
 \end{multline*}
 where we have used the Cauchy-Schwarz inequality to pass from the third to the fourth line and \cref{eq.hypnu} to obtain the last line. 
Now, by definition of $U_1$, $U_2$ it holds:
$$  \lvert\lvert U_1 - U_2 \lvert\lvert^2_{L^2(\calO)^d} = \int_{\calO} B(x)B^T(x) (\nabla u_1 - \nabla u_2) \cdot (\nabla u_1 - \nabla u_2) \:\d x.$$
Besides, since $B$ is a smooth matrix-valued function on $\overline{\calO}$ such that $B(x)$ is invertible for all $x \in \overline\calO$, there exists a constant $\underline\gamma > 0$ such that
$$\forall x \in \overline{\calO}, \: \forall \xi \in \R^d, \quad B(x) B^T(x) \xi \cdot \xi > \underline\gamma |\xi|^2. $$
We eventually infer from these facts that: 
$$  \left\langle \calA u_1 - \calA u_2 , u_1 - u_2 \right\rangle_{H^{-1}(\calO),H^1_0(\calO)} \geq \underline \nu \underline \gamma \lvert\lvert \nabla u_1 - \nabla u_2 \lvert\lvert_{L^2(\calO)^d}^2, $$
which precisely expresses that strong monotonicity of $\calA$. 

A similar calculation shows that $\calA : H^1_0(\calO) \to H^{-1}(\calO)$ is a Lipschitz continuous operator, with a Lipschitz constant depending only on $\overline \nu$ and the matrix $B(x)$. 
 
It follows from \cref{th.wpnl} that the problem \cref{eq.nlbvpcalO2} has a unique solution $w$, with Lipschitz continuous dependence on the data function $f$, 
which readily implies the desired statement about \cref{eq.nlbvpcalO}.
\end{proof}

Our study of \cref{eq.uOm} rests on results about the well-posedness of non linear evolution problems which stem from the general theory of maximal monotone operators.
For simplicity, we limit ourselves with a relatively simple statement which is enough for our purpose,  see e.g. Chap. 32 of \cite{zeidler2013nonlinear} (and notably Th. 32.D). 

\begin{theorem}\label{th.pbevolth}
Let $V \subset H \subset V^*$ be an evolution triple. 
For every $t \in (0,T)$, let $\calA(t) : V \to V^*$ be a strongly monotone and Lipschitz continuous operator, whose monotonicity and Lipschitz constants do not depend on $t$. Then for each $f \in L^2(0,T;V^*)$, the evolution equation 
$$ 
\left\{
\begin{array}{cl}
\frac{\partial u}{\partial t} + \calA u = f & \text{in } (0,T), \\ 
u(0) = u(T) ,
\end{array}
\right.
$$
has a unique solution $u$ in $W(0,T;V,H)$.
\end{theorem}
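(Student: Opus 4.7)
The plan is to reduce the time-periodic problem to two successive steps: (i) the well-posedness of the associated Cauchy (initial-value) problem with prescribed data $u(0) = u_0 \in H$, and (ii) the existence and uniqueness of a fixed point for the resulting Poincaré (period) map. Step (i) is the genuinely hard part and relies on classical monotone operator theory, while step (ii) follows from a standard contraction argument exploiting strong monotonicity.

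For step (i), I would first consider, for any $u_0 \in H$, the initial value problem $u' + \calA u = f$ on $(0,T)$ with $u(0) = u_0$, and seek a solution in $W(0,T;V,H)$. Existence is obtained by a Galerkin approximation: pick a Hilbert basis $(e_k)$ of $V$ dense in $H$, look for approximate solutions $u_n(t) = \sum_{k=1}^n c_k^n(t) e_k$ solving a finite-dimensional ODE system whose well-posedness follows from the Lipschitz continuity of $\calA(t)$ in the space-variable and a measurability argument in $t$. Testing the equation against $u_n$, and using strong monotonicity to bound $\langle \calA(t) u_n, u_n\rangle$ from below (after writing $\calA(t)u_n = (\calA(t)u_n - \calA(t)0) + \calA(t)0$), one obtains uniform bounds in $L^2(0,T;V) \cap L^\infty(0,T;H)$, together with a uniform bound on $u_n'$ in $L^2(0,T;V^*)$ coming from the Lipschitz bound on $\calA$. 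Passing to the limit requires the Minty--Browder trick (monotonicity $\Rightarrow$ demi-closedness) to identify the non linear limit, exactly as in Zeidler's Theorem~32.D. Uniqueness follows immediately by taking the difference $w=u_1-u_2$ of two solutions with the same initial datum, pairing with $w$, and using strong monotonicity.

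For step (ii), I would define the Poincaré map $P : H \to H$ by $P(u_0) := u(T;u_0)$, where $u(\cdot;u_0) \in W(0,T;V,H) \hookrightarrow \calC([0,T];H)$ is the unique Cauchy solution provided by step (i). Taking the difference $w = u(\cdot;u_0^1) - u(\cdot;u_0^2)$ of two such solutions, pairing the equation $w' + \calA u_1 - \calA u_2 = 0$ with $w$ and using the integration by parts formula from \cref{lem.densW}, one gets
\begin{equation*}
\tfrac{1}{2} \tfrac{\d}{\d t} \lvert\lvert w(t) \lvert\lvert_H^2 + \alpha \lvert\lvert w(t) \lvert\lvert_V^2 \leq 0.
\end{equation*}
The continuous embedding $V \hookrightarrow H$ gives a constant $C>0$ such that $\lvert\lvert \cdot \lvert\lvert_H \leq C \lvert\lvert \cdot \lvert\lvert_V$; Gronwall's inequality then yields
\begin{equation*}
\lvert\lvert P(u_0^1) - P(u_0^2) \lvert\lvert_H \leq e^{-\alpha T/C^2}\lvert\lvert u_0^1 - u_0^2 \lvert\lvert_H,
\end{equation*}
so that $P$ is a strict contraction on $H$.

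To conclude, the Banach fixed point theorem yields a unique $u_0^\star \in H$ with $P(u_0^\star) = u_0^\star$. The corresponding Cauchy solution $u := u(\cdot; u_0^\star) \in W(0,T;V,H)$ then satisfies $u(T) = u(0)$, i.e.\ it is the sought time-periodic solution. Uniqueness at the level of $W(0,T;V,H)$ is inherited from the uniqueness of the Cauchy problem together with the uniqueness of the fixed point of $P$. The main obstacle is really step (i): the existence part of the Cauchy problem, which requires the full machinery of maximal monotone operators and cannot be shortcut; the periodicity step, by contrast, is a soft consequence of strong monotonicity and Banach's theorem.
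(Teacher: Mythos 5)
Your argument is correct and is, in substance, exactly what the paper relies on: the paper gives no proof of this theorem, but instead invokes Th.~32.D of \cite{zeidler2013nonlinear} together with a remark that the time-dependent case carries over mutatis mutandis, and the standard proof of that cited theorem is precisely your two-step reduction (well-posedness of the Cauchy problem by Galerkin approximation and the Minty--Browder argument, then contractivity of the period map via strong monotonicity and Gronwall's lemma). The only point to watch is that the stated hypotheses do not explicitly guarantee measurability of $t \mapsto \calA(t)v$ for fixed $v$ or square-integrability of $t \mapsto \calA(t)0$, which your coercivity estimate for the Galerkin scheme requires; this is implicit in the equation making sense at all and holds in the paper's application, where $\calA(t)$ is built from the smooth motion $\varphi_t$.
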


\begin{remark}
Strictly speaking, the statement of Th. 32.D in \cite{zeidler2013nonlinear} features a non linear operator $\calA$ which is independent of time. 
However, inspection of the proof of this result reveals that it carries over mutatis mutandis to the time-dependent case, provided the strong monotonicity and Lipschitz constants of $\calA$ are independent of time.
\end{remark}

%%%%
\subsection{Proof of \cref{th.wellposed}}\label{app.proofThmain}
%%%%

\noindent  We proceed in four steps. In the first three steps, we assume that there exists a solution $u \in \Wper$ to the variational problem \cref{eq.pbuOm}-\cref{eq.pbuOmae},
 and we construct an equivalent problem for $u$, see \cref{eq.pbevolDmag}; the well-posedness of the latter results from the application of \cref{th.pbevolth}.
This procedure implies, in particular, that a solution $u$ to \cref{eq.pbuOm} is unique, if it exists.
In the final step, we construct a solution to \cref{eq.pbuOm} from the solution to \cref{eq.pbevolDmag}, thus concluding about the existence part of the statement. \par\medskip 
 
\noindent \textit{Step 1: We use a change of variables to obtain an equivalent variational problem to \cref{eq.pbuOmae} featuring a fixed arrangement of phases within $D$.}\par\medskip

\noindent Let us assume that \cref{eq.pbuOm}-\cref{eq.pbuOmae} has a solution $u$ in the space $\Wper$ defined by \cref{eq.Xper}. 
Then, for a.e. $t \in (0,T)$, the following identity is satisfied for all test functions $w \in H^1_0(D)$:
\begin{multline}\label{eq.varfuOmt}
 \int_D \sigma_{\Omega(t)}(x)\left( \frac{\partial u}{\partial t}(t,x) + v(t,x) \cdot \nabla u(t,x) \right) w(x) \:\d x \\
 + \int_D \nu_{\Omega(t)}(x,\lvert \nabla u(t,x) \lvert) \nabla u(t,x) \cdot \nabla w(x) \:\d x = \int_D f(t,x) w(x) \:\d x.
 \end{multline}
We introduce the transported function
$$\overline u \in L^2(0,T; H^1_0(D)), \quad \overline u (t,x) := u(t,\varphi_t(x)) \text{ for } t \in [0,T] \text{ and } x \in D.$$
By considering test functions of the form $\overline w = w \circ \varphi_t^{-1}$, $w \in H^1_0(D)$, a change of variables based on $\varphi_t$ in \cref{eq.varfuOmt} yields, for a.e. $t \in (0,T)$:
\begin{multline}\label{eq.varfubarapp}
\forall w \in H^1_0(D), \:\:  \int_D \sigma_{\Omega}(x)  \frac{\partial \overline{u}}{\partial t}(t,x)  w(x) \:\d x \\
+ \int_D \nu_{\Omega}(x,\lvert  B_t^T(x) \nabla \overline u (t,x)\lvert) B_t(x) B^T_t(x) \nabla \overline{u}(t,x) \cdot \nabla w(x) \:\d x = \int_D f(t,\varphi_t(x)) w(x) \:\d x,
 \end{multline}
where, we have defined the smooth matrix-valued function: 
$$ B_t(x) = \nabla \varphi_t^{-1} (x),$$
and we have used the calculus identities \cref{eq.snut}.
The evolution problem \cref{eq.varfubarapp} is of mixed parabolic-elliptic problem, since $\sigma_\Omega$ vanishes outside the subset $\Dmag$ of $D$, where it takes the value $\sigma_m >0$.\par\medskip

\noindent \textit{Step 2: We reformulate \cref{eq.varfubarapp}  as a parabolic evolution problem posed on $\Dmag$.}\par\medskip

\noindent By using test functions $w$ with compact support inside $D \setminus \overline{\Dmag}$ in \cref{eq.varfubarapp}, we see that $\overline u$ satisfies:
$$ -\dv\Big(\nu_{\Omega}(x,\lvert  B_t^T(x) \nabla \overline u(t,x)\lvert) B_t(x) B^T_t(x) \nabla \overline{u}(t,x)\Big) = f(t,\varphi_t(x)) \text{ in } D \setminus \overline{\Dmag}.$$
On the other hand, an integration by parts in \cref{eq.varfubarapp} reveals that the spatial restriction of $\overline u(t,\cdot)$ to $\Dmag$, which we still denote by $\overline u$, satisfies the following problem:
\begin{multline}\label{eq.pbevolDmag}
\forall w \in H^1(\Dmag), \quad \sigma_m \int_{\Dmag}   \frac{\partial \overline{u}}{\partial t}(t,x)  w(x) \:\d x + \left\langle \calA \overline u, w \right\rangle_{H^1(\Dmag)^*,H^1(\Dmag)} \\
+ \left\langle \calB \overline u, w \right\rangle_{H^1(\Dmag)^*,H^1(\Dmag)}  = \int_{\Dmag} f(t,\varphi_t(x)) w(x) \:\d x.
\end{multline}
Here, we have introduced the linear operator $\calA : H^1(\Dmag) \to H^1(\Dmag)^*$ and the non linear operator $\calB : H^1(\Dmag) \to H^1(\Dmag)^*$ defined by, respectively:
$$ \forall u,w \in H^1(\Dmag), \:\: \left\langle \calA u, w \right\rangle_{H^1(\Dmag)^*,H^1(\Dmag)} = \nu_m \int_{\Dmag}  B_t(x) B_t^T(x) \nabla u(t,x) \cdot \nabla w(x) \:\d x ,$$
 and 
\begin{multline*}
\forall u,v \in H^1(\Dmag), \quad \left \langle \calB u , w \right \rangle_{H^1(\Dmag)^*,H^1(\Dmag)} = \\
- \int_{\partial \Dmag} \Big(\nu_{\Omega}(x,\lvert  B_t^T(x) \nabla \calL_t u (t,x)\lvert)  B_t(x)B_t^T(x) \nabla \calL_t u(t,x)\Big)\cdot n(x) \: w(x) \:\d s(x),
\end{multline*}
where $n$ is the unit normal vector to $\partial \Dmag$ pointing outward $\Dmag$ and for $z \in H^1(\Dmag)$, $\calL_t z \in H^1(D \setminus \overline{\Dmag})$ is the unique solution to the boundary value problem:
\begin{equation}\label{eq.defL}
 \left\{\begin{array}{cl}
-\dv (\nu(x,|B^T_t(x)\nabla \calL_t z(x)|) B_t(x) B_t^T(x) \nabla \calL_t z(x)) = f(t,x) & \text{in } D \setminus \overline{\Dmag}, \\
\calL_t z = z & \text{on } \partial \Dmag,\\
\calL_t z = 0 & \text{on } \partial D.
\end{array} 
\right.
\end{equation}
Let us then analyze $\calA$ and $\calB$. 
\begin{itemize}
\item The linear operator $\calA$ is continuous and monotone; more precisely, it holds:
\begin{equation*}\label{eq.monA}
 \forall u \in H^1(\Dmag),   \quad \left\langle \calA u,u \right\rangle_{H^1(\Dmag)^*,H^1(\Dmag)}  \geq C \left\lvert\left\lvert \nabla u \right\lvert\right\lvert^2_{L^2(\Dmag)^d}.
 \end{equation*}
Here and throughout the rest of the proof, $C$ stands for a positive constant which may change from one line to the other but is independent of time. 
The existence of such a constant in the above inequality is guaranteed by the smoothness and invertibility of the mapping $\varphi$.
\item The operator $\calB$ is Lipschitz continuous, with a Lipschitz constant independent of $t$, 
as readily follows from the application of \cref{lem.wpbvpO} to the non linear operator $\calL_t : H^1(\Dmag) \to H^1(D \setminus \overline{\Dmag})$.

Moreover, for all elements $u_1$, $u_2 \in H^1(\Dmag)$, several integrations by parts followed by a calculation identical to that conducted in the proof of \cref{lem.wpbvpO} yield:
$$
 \langle \calB u_1 - \calB u_2 , u_1 - u_2 \rangle_{H^1(\Dmag)^*,H^1(\Dmag)}  \geq C \lvert\lvert \nabla \calL_t u_1 - \nabla \calL_t u_2 \lvert\lvert^2_{L^2(D \setminus \overline \Dmag)^d}.
 $$
Since the mappings $ u\mapsto \lvert\lvert \nabla u \lvert\lvert_{L^2(D)^d}$ and $u \mapsto \lvert\lvert u \lvert\lvert_{H^1(D \setminus \overline{\Dmag})}$ are equivalent norms on the subspace of $H^1(D \setminus \overline \Dmag)$ functions with null trace on $\partial D$, we obtain:
 \begin{equation*}\label{eq.monB}
\begin{array}{ccl}
 \langle \calB u_1 - \calB u_2 , u_1 - u_2 \rangle_{H^1(\Dmag)^*,H^1(\Dmag)}  &\geq& C \lvert\lvert u_1 - u_2 \lvert\lvert^2_{H^1(D \setminus \overline{\Dmag})} \\
 &\geq& C \lvert\lvert u_1 - u_2 \lvert\lvert^2_{H^{1/2}(\partial \Dmag)},
 \end{array}
  \end{equation*}
 where the second line follows from the trace inequality. 
 \end{itemize}
The combination of both points reveals that the sum $\calA + \calB$ is Lipschitz continuous; it is also strongly monotone since, for all functions $u_1$, $u_2 \in H^1(\Dmag)$, it holds:
$$\begin{array}{>{\displaystyle}cc>{\displaystyle}l}
\left\langle (\calA + \calB) u_1 -(\calA + \calB) u_2, u_1- u_2 \right\rangle_{H^1(\Dmag)^*,H^1(\Dmag)} &\geq& C \left( \left\lvert\left\lvert \nabla (u_1-u_2) \right\lvert\right\lvert^2_{L^2(\Dmag)^d} +  \lvert\lvert u_1 - u_2 \lvert\lvert^2_{H^{1/2}(\partial \Dmag)}\right)  \\
 & \geq & C \left\lvert \left\lvert u_1 - u_2 \right\lvert\right\lvert_{H^1(\Dmag)}^2,
 \end{array}$$
as follows from (an avatar of) Poincar\'e's inequality. Moreover, the Lipschitz and strong monotonicity constants of $(\calA + \calB)$ are independent of time. 
 \par\medskip 

\noindent \textit{Step 3: We use the abstract \cref{th.pbevolth}.}\par\medskip

\noindent The application of \cref{th.pbevolth} reveals that the variational problem \cref{eq.pbevolDmag} has a unique solution $\overline u \in W(0,T;H^1(\Dmag),L^2(\Dmag))$ such that 
$\overline u(t=0, \cdot) = \overline u(t=T, \cdot)$.
This almost readily implies that the solution $u \in \Wper$ to the original evolution problem \cref{eq.pbuOm} is unique, when it exists. 
Indeed, if $u_i$, $i=1,2$ are two such solutions, let us define the functions $\overline{u_i} \in L^2(0,T; H^1_0(D))$ and $z_i \in L^2(0,T; H^1(\Dmag))$ by:
$$\overline{u_i}(t,x) = u_i(t,\varphi_t(x)), \text{ and } z_i(t,\cdot) = \overline{u_i}(t,\cdot) \lvert_{\Dmag}.$$ 
As a result of the previous steps, both functions $z_i$ belong to $W(0,T;H^1(\Dmag),L^2(\Dmag))$ and satisfy the evolution equation \cref{eq.pbevolDmag} with the time periodic condition $z_i(t=0,\cdot) = z_i(t=T, \cdot)$, $i=1,2$. As a result, $z_1 = z_2$, i.e. for a.e. $t \in [0,T]$, $\overline{u_1}(t,\cdot)$ and $\overline{u_2}(t,\cdot)$ coincide on $\Dmag$. Moreover, since $\overline{u_i}(t,\cdot)\lvert_{D \setminus \overline{\Dmag}} = \calL_t z_i$ on $D \setminus \overline{\Dmag}$, it immediately follows that $\overline{u_1} = \overline{u_2}$ in $L^2(0,T; H^1_0(D))$, so that $u_1 = u_2$. 

Summarizing, we have proved that there exists at most one solution $u \in \Wper$ to \cref{eq.pbevolDmag}.
\par\medskip

\noindent \textit{Step 4: We construct a solution $u \in \Wper$ to \cref{eq.pbevolDmag}.}\par\medskip

\noindent This task essentially relies on the previous three steps. 
Let $z \in W(0,T ; H^1(\Dmag),L^2(\Dmag))$ be the unique solution to the well-posed variational problem \cref{eq.pbevolDmag} equipped with time periodic boundary conditions:
$$ z(t=0, \cdot) = z(t=T,\cdot) \text{ in } L^2(\Dmag).$$
In particular, $z(t,\cdot)$ is an $H^1(\Dmag)$ function for a.e. $t \in [0,T]$, and so we may introduce the unique solution
$ \zext(t,\cdot) = \calL_t z(t,\cdot) \in H^1(D \setminus \overline{\Dmag})$ to the version of \cref{eq.defL} featuring Dirichlet data $z(t,\cdot)$ on $\partial \Dmag$. 
Let now $\overline u(t,\cdot) \in H^1_0(D)$ be defined by:
$$ \overline u(t,x) = \left\{ 
\begin{array}{cl}
z(t,x) & \text{if } x \in \Dmag, \\
\zext(t,x) & \text{if } x \in D\setminus \overline\Dmag. \\
\end{array}
\right.$$
By construction, $\overline u$ belongs to $L^2(0,T; H^1_0(D))$, and it is one solution to \cref{eq.varfubarapp}.

It is also easily seen that $\sigma_{\Omega} \frac{\partial \overline u}{\partial t} \in L^2(0,T; H^{-1}(D))$, since $\frac{\partial z}{\partial t} \in L^2(0,T; H^1(\Dmag)^*)$ and for a.e. $t\in [0,T]$ and for any test function $\psi \in H^1_0(D)$, 
$$ \left\langle \sigma_\Omega \frac{\partial \overline u}{\partial t}(t,\cdot), \psi \right\rangle_{H^{-1}(D), H^1_0(D)} = \sigma_m  \left\langle  \frac{\partial z}{\partial t}(t,\cdot), \psi \right\rangle_{H^1(\Dmag)^*, H^1(\Dmag)}.$$

Eventually, let $u \in \Wper$ be defined by 
$$u(t,x) = \overline u(t,\varphi_t^{-1}(x)), \quad t \in [0,T], \:\: x \in D.$$ 
By reusing the calculations conducted in the first step, we see that $u$ is one solution to \cref{eq.pbevolDmag}. The Lipschitz dependence of this function on the data function $f$ results immediately from the Lipschitz dependence of the solutions to the problems \cref{eq.defL,eq.pbevolDmag} with respect to their right-hand sides, and we omit the details for brevity. This terminates the proof.
\begin{flushright}
\qed
\end{flushright}

%%%%%%%%%%%%%%%%%%%%%%%%%%%%%%%
%
\bibliographystyle{siam}
\bibliography{genbib.bib}
%%%%%%%%%%%%%%%%%%%%%%%%%%%%%%%

\end{document}